\def\newprooflikeenvironment#1#2#3#4{
      \newenvironment{#1}[1][]{
          \refstepcounter{equation}                                 
          \begin{proof}[{\rm\csname#4\endcsname{#2~\theequation}
          }]
         \it \def\qedsymbol{#3}}
         {\end{proof}}}                                             
 \newtheorem{theorem}[equation]{Theorem}                            
 \newtheorem{lemma}[equation]{Lemma}                                
 \newtheorem{corollary}[equation]{Corollary}                        
 \newtheorem{proposition}[equation]{Proposition}
\title[Geometric Deformation \& Lie Algebroids]{The geometric deformation of curved $L_\infty$ algebras and Lie algebroids}
\author{Xiaoyi Cui}
\address{School of Mathematics (Zhuhai), Sun Yat-sen University, Zhuhai 519082, China}
\email{cuixy9@mail.sysu.edu.cn}
\date{\today}
\begin{document}

\thanks{
The author thanks Chenchang Zhu for helpful suggestions. XC was partially supported by NSFC grant 11801588 and by Guangdong Natural Science Foundation grant 2018A030313273. XC thanks Kavli IPMU and the University of G\"ottingen, where part of the work was done. 
}

\maketitle

\begin{abstract}

While $L_\infty$ algebras are fundamental structures in differential geometry and mathematical physics, the geometric information encoded in such structures is often implicit. We address the following question: What constitutes a geometrically meaningful deformation of an $L_\infty$ algebra arising from vector bundles, and how can such deformations classify new geometric invariants?

Inspired by nonabelian extension theory of Lie algebras, we define geometric deformations of curved $L_\infty$ algebras constructed from a vector bundle $V\to M$, and demonstrate that such deformations uniquely correspond to Lie algebroid structures on $V$. Explicit computations reveal that the first Atiyah-Chern class, expressible via deformed $L_\infty$ brackets, transgresses to the de Rham coboundary of the modular class. In the case of action Lie algebroids, the leading-order Atiyah-Chern classes correspond to the equivariant Chern characters. 

Applications to BV theories show that the geometric deformations naturally generate Poisson sigma models. These results provide a coherent framework for deriving field theories from geometric deformations of $L_\infty$ algebras.
\end{abstract}

\tableofcontents

\section{Introduction}

Given a Lie algebra $\mathfrak k$ together with a vector space $\mathfrak a$, it is natural to consider the extension problem, namely, to classify all possible Lie algebra structures on the linear direct sum $\mathfrak a\oplus \mathfrak k$ for which the canonical projection $\mathfrak a\oplus \mathfrak k\to \mathfrak k$ is a Lie algebra homomorphism. Each such structure extends to a short exact sequence of Lie algebras \[0\to \mathfrak a\to \mathfrak a\oplus \mathfrak k\to \mathfrak k\to 0.\]
The classification of such structures is encoded in the (nonabelian) Lie algebra cohomology of $\mathfrak k$, given a fixed Lie module structure on $\mathfrak a$.
 
The coextension problem concerns the natural injection map $\mathfrak k\to \mathfrak a\oplus \mathfrak k$ and aims to determine the conditions under which this map defines a Lie algebra homomorphism. Such a coextension endows $\mathfrak a$ with a $\mathfrak k$-module structure. However, unlike the extension problem, this construction does not necessarily yield a short exact sequence and has received less attention. Nonetheless, this framework encompasses several interesting examples such as r-matrices and twilled Lie algebras \cite{BGN, KSM}. The general problem is inherently complex, necessitating additional constraints to obtain meaningful classifications \cite{AM}.

In the current paper, we investigate a version of coextension problem at the intersection of (curved) $L_\infty$ algebra and differential geometry. We fix a real vector bundle $V$ over a smooth manifold $M$. Throughout the paper, we shall let $\mathfrak h$ denote the $L_\infty$ algebra $\Omega_M\otimes \Gamma( T_M[-1])$ (Proposition \ref{prop:hdefn}) and let $\mathfrak v$ denote the $L_\infty$ module $\Omega_M\otimes \Gamma( V)$ (Proposition \ref{prop:vdefn}). 
Their direct sum $\mathfrak h\oplus \mathfrak v$ admits a minimally extended $L_\infty$ algebra structure from $\mathfrak h$. A geometric deformation of this structure refers to perturbing the $L_\infty$ structure on $\mathfrak h\oplus \mathfrak v$ in a way that preserves the bundle data of $V$ (Definition \ref{geomdefdef}). Such deformations correspond exactly to coextensions of $\mathfrak h$ by $\mathfrak v$ (Proposition \ref{geomdefdef}). We prove (Theorem \ref{thm1}) that the geometric deformations correspond specifically to Lie algebroid\footnote{In this paper, the term \emph{Lie algebroid} refers to the standard definition: a vector bundle $V \to M$ equipped with a bundle map $\rho: V \to TM$ (the anchor) and a Lie bracket on $\Gamma(V)$ satisfying the Leibniz rule.} structures on $V$, and clarifies in what sense two $L_\infty$ structures on the same underlying space are homotopy equivalent. The result is obtained through a recursive computation generalizing Fedosov's method \cite{Fedosov, CFT, Dolgushev}. {In \cite{GG_l}, it was shown that there exists a functor embedding Lie algebroids to the category of $L_\infty$ algebra. However, the precise image of such embedding was not given. }By the insightful observation of Va\'{\i}ntrob \cite{Vain}, there is a bijection between Lie algebroid structures on the vector bundle $V\to M$ and the homological vector fields on the $\mathbb Z$-graded manifold $V[1]$. Then our findings can be viewed as an $L_\infty$-algebraic extension of this perspective, suggesting that forming Lie algebroids is the only viable approach to encoding additional multiplicative structures on a vector bundle. Furthermore, by the result of \cite{OZ}, the geometric deformation necessarily induces an $L_\infty$ action of $V$ on $T_M$. 

Then we provide concrete characterizations of the Atiyah and Atiyah-Chern classes of the geometrically deformed $L_\infty$ algebra through local computations.  
Since the $l_n$ operations are given by specific tensors in geometric terms, it is possible to describe a few lower operations concretely. The covariant computations yield tensor representatives of the Atiyah class (Proposition \ref{atiyah0part}) and the Atiyah-Chern classes. 
The Atiyah-Chern classes reside within the de Rham complex of $\mathfrak h\oplus \mathfrak v$, which is weakly equivalent to the Weil algebra\footnote{This was first shown by \cite{GG_l} using homotopical methods. Our approach uses a local computation with an explicit homotopy retraction.} of $V$ through a homotopy retraction (Proposition \ref{propweilclass}). Despite being trivial at cohomology level, the Atiyah-Chern classes remain interesting quantities to look into, which may shed light on the equivariant character theory of Lie algebroids. We demonstrate that the first Atiyah-Chern class is the $d^{DR}$-coboundary of the modular class \cite{ELW} (Proposition \ref{modularc1}).  As for the general Atiyah-Chern classes originating from an action Lie algebroid, they give equivariant Chern characters in the leading order (Proposition \ref{equivarKalk}).

Finally we discuss some applications in Batalin-Vilkovisky (BV) theories. The Alexandrov-Kontsevich-Schwarz-Zaboronsky (AKSZ) method has been used to describe classical aspects of these models, whose geometric input can be encoded in an $L_\infty$ algebra together with an admissible pairing \cite{AKSZ, Co11}.  
We show that the geometric deformation of the $L_\infty$ structures in topological cotangent theories that preserves their admissible pairings leads in a unique way to the Poisson sigma models (Theorem \ref{posthm}). 
From the perspective of model building, while Chapter 12 of \cite{CG2} demonstrates how $L_\infty$ algebra extensions model classical BV theories with symmetries, coextensions given by the geometric deformation yield a broader class of theories, among which are the cotangent theories as described in Proposition \ref{prop:cotang}. 
The one-loop partition function of those cotangent theories combines the Atiyah-Chern classes of the target $L_\infty$ algebra with arithmetic invariants of the worldsheet geometry \cite{Co11, GG_ahat, GLL}. 
Therefore the Atiyah-Chern classes computed in Section \ref{atiyah} provide critical insights into the one-loop behavior of topological cotangent theories (Proposition \ref{prop:cotang}).

There are several relevant results to our work that we'd like to highlight.
It is crucial that the $L_\infty$ structures explored in the current paper are dependent on the choice of connections for both the tangent bundle and the Lie algebroid. Similar settings have been studied and led to the theory of secondary characters \cite{Fernandes, Crainic00, CF}, see also \cite{Blaom, KS} for the discussion on the compatible connections. Despite the fact that the secondary characters reside in Lie algebroid cohomology rather than the Weil algebra \cite{AAC, AAC11}, the latter is a natural location for Atiyah-Chern classes, which hopefully brings us new information on the equivariant theory.

The Atiyah class of Lie algebroids has been studied extensively in various settings, including smooth algebraic varieties \cite{CVB}, dg-manifolds \cite{CSX} and Lie pairs \cite{MSX}. In both the dg-manifold and Lie pair settings, it has been shown that the Atiyah and Atiyah-Chern classes coincide for regular Lie algebroids \cite{CXX, Xiang}. These results can be compared with the construction using $L_\infty$ spaces \cite{GG_l}. Our computation serves as a complement to the above results and is particularly applicable to non-regular Lie algebroids.

A paper by Calaque, Campos, and Nuiten \cite{CCN} establishes an equivalence of $\infty$-categories between $L_\infty$-algebroids over a smooth manifold $M$ and certain curved $L_\infty$ algebras arising from graded bundles.
While the functor from the $L_\infty$-algebroid side to the $L_\infty$ algebra side is made fairly explicit (see Section 5.2 and 6.1 in \cite{CCN}), the inverse direction is described only abstractly, without a concrete construction on representatives.
The present work provides such an explicit construction for a distinguished subclass of objects --- namely, $L_\infty$ algebras that contain a fixed sub-$L_\infty$ algebra encoding the smooth structure of $M$.
Although our notion of geometric deformation does not encompass all fibrant-cofibrant objects in the CCN model, it defines a particularly interesting and computable subclass.
It would be desirable to describe, for general graded bundles, a geometric fibrant replacement adapted to this setting, paralleling the construction of fibrant replacements in \cite[Section 6.1]{CCN}.
A clearer geometric understanding of such fibrant replacements should also strengthen the study of geometric deformations and of the corresponding Atiyah classes of $L_\infty$ algebras over dg manifolds \cite{SSX}, especially once the computation of Atiyah-Chern characters is extended to the graded-bundle case.

The paper is organized as follows.

In Section \ref{linfty} we introduce the geometric deformation on the semidirect sum $L_\infty$ algebra $\mathfrak h\oplus \mathfrak v$. We use an adjusted version of Fedosov's method to demonstrate that the requirement for the geometric deformation can be solved recursively.
Then we compute lower brackets of the deformed $L_\infty$ algebra, and show that the nilpotency of the differential results in precise conditions for a vector bundle to become a Lie algebroid. We also investigate the homotopy equivalences of different deformed structures.

In Section \ref{atiyah} we define and calculate the Atiyah and Atiyah-Chern classes of $L_\infty$ algebras, which are cohomological classes that encode both the smooth geometry of bundles and the nonlinear products of Lie algebroids. We discuss the connection between Atiyah-Chern classes and existing results.

In Section \ref{aksz} we give a brief review of cotangent BV field theories that take $L_\infty$ algebras as input. Then we show that any geometric deformation, applied to a two-dimensional cotangent theory, necessarily produces a Poisson sigma model.

In Appendix \ref{appA} we provide a comprehensive review of the undeformed curved $L_\infty$ algebra which encodes the smooth geometry. We modify Fedosov's method that facilitates recursive computation of the $L_\infty$ structure while allowing for arbitrary gauge choices. The homotopy equivalences of pertinent structures are discussed.

In Appendix \ref{appB} some lower brackets of the geometrically deformed $L_\infty$ algebra are computed.

\section{Curved $L_\infty$ structures from vector bundles and geometric deformation}
\label{linfty}

In this section, we begin by reviewing the construction of the curved $L_\infty$ algebra associated with a real vector bundle, and subsequently explore deformations of the geometric type. 

\subsection{General framework}

Throughout this paper, the base field is assumed to be $\mathbb R$. Let $\mathbb V$ be a $\mathbb Z$-graded vector space. For any $n$-tuple of elements $v_1, \cdots, v_{n}\in \mathbb V$ and $\tau \in S_n$, the {\it Koszul sign factor} is a $\{1,-1\}$-valued function defined by the equation
\[v_{\tau(1)}\cdot v_{\tau(2)}\cdots v_{\tau(n)} = \epsilon_{\tau}^{v_1,\cdots, v_{n}} v_1\cdot v_2\cdots v_{n}\]
in ${\rm Sym}^{n} \mathbb V$. The Koszul sign factor naturally extends to modules defined over unital commutative graded algebras.
Let $Sh(p,q)$ be the set of $(p,q)$ unshuffles inside the permutation group $S_{p+q}$ in $p+q$ letters.

Within this paper, we assume that all the graded commutative differential algebras are non-negatively graded, unital, and equipped with a nilpotent differential ideal. 

\begin{definition}
Let $(A, d_A)$ be a graded commutative differential algebra (so, in particular, it has a nilpotent differential ideal $I_A$).
A curved $L_\infty$ algebra over $(A,d_A)$ is a 
$\mathbb Z$-graded finitely generated projective $A$-module $\mathbb V$, equipped with a collection of $(A,d_A)$-linear maps (i.e., the $n$-ary brackets)
\[l_n: {\rm Sym}^{n}_{A} (\mathbb V[1])\to \mathbb V[1]\]
of degree $1$ for $n\geq0$, such that
\begin{enumerate}
\item for every $k$-tuples $v_1,\cdots, v_k$ of homogeneous elements in $\mathbb V[1]$, 
\[\sum_{i=0}^k\sum_{\tau\in Sh(i,k-i)} \epsilon_{\tau}^{v_1,\cdots, v_k} l_{k-i+1}(l_{i}(v_{\tau(1)},\dots, v_{\tau(i)}),v_{\tau(i+1)},\dots, v_{\tau(k)})=0,\] 
\item the $0$-ary bracket $l_0: A\to \mathbb V[1]$ maps $1\in A$ to an element in the submodule $I_A(\mathbb V[1])$.
\end{enumerate}
\label{linftydefn}
\end{definition}

The $(A,d_A)$-linearity condition requires that all maps $\{l_n\}_{n=0,n\neq1}^\infty$ be $A$-linear, whereas $l_1$ satisfies the Leibniz rule
\[l_1(fv) =( d_Af )v+(-1)^{|f|} f l_1(v),\forall f\in A, v\in \mathbb V.\]

\begin{remark}
A curved $L_\infty$ algebra structure defined on $\mathbb V$ as above corresponds to a (curved) $L_\infty[1]$ algebra structure on $\mathbb V[1]$ in literature such as \cite{SSX}. In both cases, the Koszul sign factors are calculated according to the degree in $\mathbb V[1]$ rather than $\mathbb V$.
\end{remark}

The conditions on the $\{l_n\}$ operations can be encoded\footnote{By virtue of the f.g. projective module condition, $\mathbb V$ becomes reflexive, and as such, taking dual is reversible.} into a coboundary condition $d^2=0$ on the Chevalley-Eilenberg (CE) cochain 
\[C^*(\mathbb V):= \widehat{{\rm Sym}}_A(\mathbb V^\vee[-1]),\] 
where the differential is a degree $1$ derivation (on the $(A,d_A)$-algebra) fully determined by the duals of all $l_n$ operations,
\[d\omega  = \sum_{n=0}^\infty \frac1{n!}l_n^\vee (\omega),\quad \forall \omega\in \mathbb V^\vee[-1].\]
The final condition in Definition \ref{linftydefn} involving the nilpotent ideal $I_A$ now takes the form of requiring that $d$ preserves the ideal $\widehat{{\rm Sym}}^{\geq1}_A(\mathbb V^\vee[-1])$ up to elements in $I_A$.

Within the context, we introduce the notion of morphisms between two $L_\infty$ algebras and homotopy equivalences. 

\begin{definition}
Given two $L_\infty$ algebras $\mathfrak g_1$ and  $\mathfrak g_2$ over a commutative dg algebra $(A, d_A)$, an $L_\infty$ morphism from $\mathfrak g_1$ to  $\mathfrak g_2$ is a commutative dg algebra map 
\[C^*(\mathfrak g_2)\to C^*(\mathfrak g_1)\]
that preserves the filtration defined by the nilpotent ideal $I_A\subset A$. An $L_\infty$ morphism is called {\it strict} if the corresponding commutative dg algebra map is linear, i.e., the map preserves the degree of symmetric powers on both sides.
\end{definition}

\begin{definition}
A flat family of curved $L_\infty$ algebra structures on $\mathbb V$ parameterized by the standard $n$-simplex $\Delta^n$ consists of a curved $L_\infty$ algebra over $(\Omega_{\Delta^n}\otimes_{\mathbb R} A, d_t\otimes 1+1\otimes d_A)$, i.e., a degree $+1$ derivation $d^{(t)}$ on the extended CE complex $\Omega_{\Delta^n}\otimes_{\mathbb R} C^*(\mathbb V)$, together with a splitting $d^{(t)}= d_{CE}^{(t)}+D_t$ where
\begin{enumerate}
\item for every $t\in \Delta^n$, the restriction of $d_{CE}^{(t)}$ to $C^*(\mathbb V)$ is the CE differential of a curved $L_\infty$ algebra structure on $\mathbb V$,
\item the operator $D_t$ is a connection on the (trivial) $C^*(\mathbb V)$-bundle over $\Delta^n$ satisfying $[d_{CE}^{(t)}, D_t]+\frac12[D_t, D_t]=0$,
\item the total differential $d^{(t)}$ preserves the ideal $\Omega_{\Delta^n}\otimes_{\mathbb R} \widehat{{\rm Sym}}^{\geq1}_A(\mathbb V^\vee[-1])$ up to $\Omega_{\Delta^n}\otimes_{\mathbb R}I_A$.
\end{enumerate}
Two curved $L_\infty$ algebra structures $d_{CE}^0$ and $d_{CE}^1$ are $n$-homotopic if they appear as the restrictions to the $0$-th and $n$-th vertices respectively of a flat $\Delta^n$-family of curved $L_\infty$ algebras.
\end{definition}

\begin{definition}
Let $\mathfrak g$ be a curved $L_\infty$ algebra over the unital commutative dg algebra $(A,d_A)$. An $L_\infty$ module of $\mathfrak g$ is a finitely generated $\mathbb Z$-graded projective $A$-module $\mathbb V$, equipped with a collection of $(A,d_A)$-linear maps (i.e., the $n$-ary brackets)
\[l'_n: {\rm Sym}^{n-1}_{A} (\mathfrak g[1])\otimes_A \mathbb V[1] \to \mathbb V[1]\]
of degree $1$ for $n\geq1$, such that
for any $k$-tuple $u_1,\cdots, u_k$ of homogeneous elements in $\mathfrak g[1]$ and for any element $v\in \mathbb V[1]$
\begin{eqnarray*}&&\sum_{i=0}^k\sum_{\tau\in Sh(i,k-i)} \epsilon_{\tau}^{u_1,\cdots, u_k} l'_{k-i+1}(l_{i}(u_{\tau(1)},\dots, u_{\tau(i)}),u_{\tau(i+1)},\dots, u_{\tau(k)}, v)\\
&+&\sum_{i=1}^k\sum_{\tau\in Sh(k-i,i)} \epsilon_{\tau}^{u_1,\cdots, u_k} 
(-1)^{(\sum_{j=1}^{k-i} |u_{\tau(j)}|)(1+\sum_{j=k-i+1}^{k} |u_{\tau(j)}|+|v|)}\\
&&\qquad\qquad\times
l'_{k-i+1}(u_{\tau(1)},\dots, u_{\tau(k-i)},l'_{i}(u_{\tau(k-i+1)},\dots, u_{\tau(k)},v))=0.
\end{eqnarray*}

\end{definition}

\begin{definition}
Fix a (curved) $L_\infty$ algebra $\mathfrak g$, two $\mathfrak g$-module structures on $\mathbb V$ are homotopic, if they induce homotopic $L_\infty$ algebra structures on $\mathfrak g\oplus \mathbb V$.
\end{definition}

For any $L_\infty$ $\mathfrak g$-module $\mathbb V$, there exists a natural $L_\infty$ algebra structure on $\mathfrak g\oplus \mathbb V$, where the $n$-ary brackets are uniquely determined by $\{l_n^\vee \}_{n=0}^\infty$ and $\{(l'_n)^\vee\}_{n=1}^\infty$.  Moreover, the natural maps $\mathfrak g\oplus \mathbb V\to \mathfrak g$ and $\mathfrak g\to \mathfrak g\oplus \mathbb V$ are strict morphisms of $L_\infty$ algebras over $(A,d_A)$.
This is referred to as a {\it split} Abelian extension (, or the semidirect sum) of $L_\infty$ algebras $\mathfrak g$ by (resp. and) $\mathbb V$.

It is possible to deform the $L_\infty$ structure on $\mathfrak g\oplus \mathbb V$ while preserving only the strict $L_\infty$ morphism $\mathfrak g\to \mathfrak g\oplus \mathbb V$. Then the CE differential on the deformed $L_\infty$ algebra must necessarily preserve the ideal generated by $\mathbb V^\vee[-1]$ in $C^*(\mathfrak g\oplus \mathbb V)$. We also want the $L_\infty$ module structure on $\mathbb V$ to be easily distinguished from any new structures, which leads to the following definition of geometric deformation.

\begin{definition}
Let $\mathfrak g$ be a curved $L_\infty$ algebra over $(A, d_A)$ and let $\mathbb V$ be an $\mathfrak g$-module. 
Assigning weight $-1$ to $\mathbb V$, we say that an $L_\infty$-structure $\{\tilde l_n\}_{n\geq0}$ on $\mathfrak g\oplus \mathbb V$ {\bf geometrically} deforms the split Abelian extension if \begin{enumerate}
\item each of the brackets $\tilde l_n$ has non-negative weights, and
\item the weight-$0$-components of $\tilde l_n$,  
\[ {\rm Sym}^n (\mathfrak g[1])\to \mathfrak g[1]\] and 
\[\mathbb V[1] \otimes {\rm Sym}^n (\mathfrak g[1])\to \mathbb V[1]\]
coincide with corresponding operations in the split Abelian extension. 
\end{enumerate}
\label{geomdefdef}
\end{definition}

It is sometimes useful to introduce a formal parameter $\hbar$ that keeps track of the deformation. In the above context, one can assign the formal power $\hbar^k$ to the weight-$k$-components of the deformed bracket$\{\tilde l_n\}_{n=0}^\infty$.

The definition of geometric deformation depends explicitly on the chosen $L_\infty$-module structure of $\mathbb V$. Indeed, this dependence is not merely a requirement encompassing vector bundle information; rather, it also can be derived from a dual extension problem of the $L_\infty$ algebra $\mathfrak g$.

\begin{proposition}
Let $\mathfrak g$ be a curved $L_\infty$ algebra over $(A, d_A)$ and let $\mathbb V$ be a finitely generated $\mathbb Z$-graded projective $A$-module. Suppose that operations $\{\tilde l_n\}_{n=0}^\infty$ define an $(A, d_A)$-linear curved $L_\infty$ structure on $\mathfrak g\oplus \mathbb V$ such that 
\begin{enumerate}
\item the obvious inclusion
\[\iota: \mathfrak g\to \mathfrak g\oplus \mathbb V\]
is a strict $L_\infty$ morphism,
\item the curving is preserved by $\iota$.
\end{enumerate}
Then $\mathbb V$ is an $L_\infty$-module of $\mathfrak g$ such that $\{\tilde l_n\}_{n=0}^\infty$ is a geometric deformation in the sense of Definition \ref{geomdefdef}.
\end{proposition}
\begin{proof}
Assigning weight $-1$ to $\mathbb V$ as before, the operations $\{\tilde l_n\}_{n=0}^\infty$ decompose accordingly. The weight $0$-components have to satisfy the quadratic $L_\infty$ relation since negatively weighted operations are not allowed, due to the property of $\iota$. Furthermore, the components \[ {\rm Sym}^n (\mathfrak g[1])\to \mathfrak g[1]\] for any $n$
is entirely fixed by the $L_\infty$ structure on $\mathfrak g$. So the remaining components of weight $0$-operations are forced to encode an $\mathfrak g$-module structure on $\mathbb V$.\end{proof}

The coextension problems are typically more complex than extension problems \cite{AM}. The central result of this section is that in a geometric context to be presented, we can provide a full characterization of all such extensions.

\subsection{Geometric realization from vector bundles and the deformation}

We now specialize the general framework of previous subsection to a concrete geometric setting. The ensuing discussion will be centered on a specific class of $L_\infty$ algebras constructed from vector bundles, within which we study the geometric deformations.

\begin{proposition}[\cite{GG_ahat, GLL}]
Given a smooth manifold $M$ with a Levi-Civita connection $\nabla$ on the tangent bundle $T_M$, there exists a curved $(\Omega_M, d)$-linear $L_\infty$ algebra on the de Rham complex\footnote{Unless explicitly noted, the tensor product $\otimes$ is over the sheaf/ring of smooth functions $C^\infty_M$.} 
\[\mathfrak h\equiv\Omega_{M}\otimes \Gamma( T_M )[-1]\]
 such that
 \begin{enumerate}
 \item there exists an element $\mathbf c\in \Omega_{M}^{\geq1}\otimes \Gamma( T_M ) \subset\mathfrak h[1] $ that uniquely determines the $0$-ary operation through
 \[l_0: \Omega_M\equiv {\rm Sym}^0 \mathfrak h[1] \to \mathfrak h[1]: f\mapsto f\wedge \mathbf c\, \quad \forall f\in \Omega_M,\]
\item the unary operation is determined by the $\Omega_M$-extended connection $\nabla$ via 
\[l_1: \mathfrak h[1]\equiv {\rm Sym}^1 \mathfrak h[1] \to \mathfrak h[1]: fY\mapsto dfY+(-1)^{|f|} f\nabla Y\] for all $f\in \Omega_M$ and $Y\in \Gamma( T_M )$,
 \item up to a gauge\footnote{The gauge choice can be understood as some further conditions needed to uniquely specify $\{l_n\}_{n=2}^\infty$ once $l_0$ and $l_1$ are chosen. Within the context of the current proposition, different gauge choices lead to homotopic $L_\infty$ structure, see Appendix \ref{appA}.} choice, further $n$-ary brackets are uniquely defined by the $L_\infty$ relations and the previous two brackets.
 \end{enumerate}
 Furthermore, different choices of connections on $T_M$ lead to homotopic $L_\infty$ structures. 
 \label{prop:hdefn}
\end{proposition}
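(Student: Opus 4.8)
The plan is to produce the claimed structure as the dual of an explicit Fedosov-type differential on the Chevalley--Eilenberg algebra, built by a recursion from the soldering form and the curvature. First I would identify $C^*(\mathfrak h)=\widehat{{\rm Sym}}_{\Omega_M}(\mathfrak h^\vee[-1])$ with the algebra $\Omega_M\,\widehat\otimes_{C^\infty_M}\widehat{{\rm Sym}}_{C^\infty_M}\Gamma(T_M^\vee)$ of $\widehat{{\rm Sym}}(T_M^\vee)$-valued de Rham forms, graded by de Rham degree, with the fibre-linear generators in degree $0$; write $y$ for a fibre coordinate. On this algebra there are two canonical derivations: the Koszul operator $\delta$, which contracts one symmetric slot against a de Rham slot — equivalently, contraction with the tautological section $\mathbf c\in\Omega^1_M\otimes\Gamma(T_M)$, the soldering form — of bidegree $(+1,-1)$ in (form degree, fibre degree), and its partial inverse $\delta^{-1}$ of bidegree $(-1,+1)$, with $\delta\delta^{-1}+\delta^{-1}\delta$ equal to multiplication by (form degree $+$ fibre degree); both are globally defined since they are built from the comultiplications on $\Lambda$ and ${\rm Sym}$. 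The torsion-free connection $\nabla$ (together with $d$ on $\Omega_M$) extends to a degree-$1$ derivation, and I would look for the CE differential in the Fedosov ansatz
\[ d=-\delta+\nabla+A,\qquad A=\textstyle\sum_{q\ge 2}A_q, \]
where $A$ is the derivation determined by a $T_M$-valued $1$-form with coefficients in $\Omega^1_M\otimes\widehat{{\rm Sym}}^{\ge2}(T_M^\vee)$, so that $-\delta$ supplies $l_0$ and $\nabla$ supplies $l_1$ with no corrections, while the $A_q$ supply $l_{q+1}$.

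Next I would impose $d^2=0$. Using $\delta^2=0$, $[\delta,\nabla]=0$ (which is exactly torsion-freeness, as $\nabla\mathbf c$ equals the torsion up to a constant), and $\nabla^2=R^\nabla$, this is equivalent to the Fedosov equation $\delta A=R^\nabla+[\nabla,A]+\tfrac12[A,A]$, the brackets being the covariant derivative and the Frölicher--Nijenhuis-type self-bracket of $T_M$-valued forms. I would then fix the gauge $\delta^{-1}A=0$; since $A$ has no fibre-degree $\le1$ component, applying $\delta^{-1}$ converts this into the fixed-point equation $A=\delta^{-1}\bigl(R^\nabla+[\nabla,A]+\tfrac12[A,A]\bigr)$, which, because $\delta^{-1}$ strictly raises fibre degree, is solved uniquely order by order: $A_2=\delta^{-1}R^\nabla$, and $A_{q+1}$ is a universal polynomial in $\nabla A_q$ and the $[A_j,A_{q+1-j}]$ with $2\le j\le q-1$. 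Other admissible $\delta^{-1}$'s are the gauge choices in item (3). The main obstacle is then the Fedosov vanishing step: one must check that the recursively produced $A$ really solves $\delta A=R^\nabla+\cdots$ and not merely its $\delta^{-1}$-image. Setting $F:=d^2$ (a derivation of fibre degree $\ge1$), one shows — using the Bianchi identity $[\nabla,R^\nabla]=0$ and the graded Jacobi identity for the bracket of $T_M$-valued forms — that $F$ has no harmonic part and satisfies a homogeneous equation of the form $F=\delta^{-1}(\text{terms linear in }F)$, whence $F=0$ by induction on fibre degree.

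Granting $d^2=0$, reading off the dual operations yields the proposition. The fibre-degree-$0$ part of $d$ on a generator is $-\delta y=-dx$, i.e.\ $l_0(f)=f\wedge\mathbf c$ with $\mathbf c$ the soldering form; this lies in $\Omega^{\ge1}_M\otimes\Gamma(T_M)=I\cdot\mathfrak h[1]$ for the nilpotent differential ideal $I=\Omega^{\ge1}_M\subset(\Omega_M,d)$, giving item (1) together with condition (2) of Definition \ref{linftydefn}; and $d$ preserves $\widehat{{\rm Sym}}^{\ge1}_{\Omega_M}(\mathfrak h^\vee[-1])$ modulo $I$ because $\delta$ lowers fibre degree only at the cost of a positive-degree de Rham form, while $\nabla$ and $A$ preserve fibre degree $\ge1$. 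The fibre-degree-$1$ part of $d$ is $\nabla$, whose dual is precisely $l_1(fY)=df\,Y+(-1)^{|f|}f\,\nabla Y$, giving item (2); and the $A_q$ dualize to the $l_{q+1}$, uniquely pinned down once $\delta^{-1}$ is chosen, giving item (3).

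Finally, for homotopy invariance, given two connections $\nabla_0,\nabla_1$ I would interpolate $\nabla_t=\nabla_0+t(\nabla_1-\nabla_0)$ and rerun the construction over $(\Omega_{[0,1]}\otimes_{\mathbb R}\Omega_M,\ d_t\otimes1+1\otimes d)$: the same recursion produces a $t$-family $d^{(t)}_{CE}=-\delta+\nabla_t+A_t$, and then one solves — again by a $\delta^{-1}$-recursion with a suitable gauge normalization — for a $dt$-component $dt\wedge(\partial_t+h_t)$ making the total derivation on $\Omega_{[0,1]}\otimes C^*(\mathfrak h)$ square to zero. This is exactly the data required by the third definition above: the $dt$-free part splits as a flat connection $d_t+F(t)$ (with $F(t)$ assembled from $h_t$) plus $d^{(t)}_{CE}$, the flat connection preserves the relevant ideal modulo $\Omega_{[0,1]}\otimes I$ by the same fibre-degree bookkeeping, and $d^{(t)}_{CE}|_{t=0}$, $d^{(t)}_{CE}|_{t=1}$ recover the two structures; the analogous interpolation shows independence of the gauge $\delta^{-1}$. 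The detailed verification of convergence of the recursions and of the vanishing argument, in the presence of arbitrary gauge, is carried out as in the cited references and in Appendix \ref{appA}.
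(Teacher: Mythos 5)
Your proposal is correct and follows essentially the same route as the paper's Appendix \ref{appA}: a Fedosov-type ansatz $\delta+\nabla+\sum_{q\ge2}d^\nabla_q$, the quadratic relation solved order by order through $\delta^{-1}$ in a chosen gauge, a Bianchi/Jacobi-identity argument guaranteeing that the recursive solution satisfies the true equation rather than only its $\delta^{-1}$-image (the paper phrases this as a compatibility lemma and solves on the generators $y^i$, which is equivalent to your fixed-point equation for $T_M$-valued forms), and homotopy invariance via the interpolated family $\nabla_t$ with a recursively constructed flat connection $d_t+F(t)$ as in Lemma \ref{familylem} and Proposition \ref{htpyeqnhstru}. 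The only quibble is an indexing slip: since $A_2=\delta^{-1}R^\nabla$ dualizes the binary bracket (compare $l_2(X,Y)=\tfrac13(\iota_XRY+\iota_YRX)+\cdots$), the component $A_q$ supplies $l_q$ rather than $l_{q+1}$, which does not affect the argument.
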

By fixing a set of local coordinates $\{x^i\}_{i=1}^{{\rm dim}(M)}$ on $M$, we uniquely determines a frame $\{y_i\equiv \frac\partial{\partial x^i}\}_{i=1}^{{\rm dim}(M)}$ on $T_M$. The curving $\mathbf c$ is given by $dx^i\otimes y_i\in \mathfrak h[1]$.
 Note that the tensor product is taken over the ring of smooth functions. Under the coordinate change, $\{x^i\}_{i=1}^{{\rm dim}(M)} \to \{\tilde{x}^i\}_{i=1}^{{\rm dim}(M)}$, 
 \[d\tilde x^i=\frac{\partial \tilde x^i}{\partial x^j} dx^j,\quad \tilde y_i= \frac{\partial  x^j}{\partial \tilde x^i} y_j\] 
 \[d\tilde x^i\otimes \tilde y_i =  \frac{\partial \tilde x^i}{\partial x^j} dx^j\otimes \frac{\partial  x^{j'}}{\partial \tilde x^i} y_{j'}=  \frac{\partial  x^{j'}}{\partial \tilde x^i}\frac{\partial \tilde x^i}{\partial x^j}dx^j\otimes  y_{j'}\]
 This verifies that $\mathbf c$ is a well-defined object in $\Omega^1_M\otimes\Gamma(T_M)$.
 
Higher operations can be computed recursively from the CE cochain $\widehat{{\rm Sym}} (\mathfrak h^\vee[-1])$. We demonstrate this fact in Lemma \ref{recursivelemma} and Proposition \ref{propsolvhstru}, which are presented in Appendix \ref{appA}. Furthermore, we establish in Proposition \ref{htpyeqnhstru} that these structures are homotopically equivalent.

\begin{remark}
A gauge choice can be made by specifying the desired values of the dual brackets $\{l_{n}^\vee\}_{n=2}^\infty$ under the homotopy contraction map $\delta^{-1}$, which locally is represented as
\[\frac1{p+q} y^i\iota_{\frac\partial{\partial x^i}} \, {\rm on}\,\, \Omega_M^p\otimes \Gamma\big({\rm Sym}^q( T_M^\vee)\big),\, \forall p+q>0.\]
In the case of \cite{Fedosov} and implicitly in the jet-bundle approach \cite{GG_ahat}, a popular choice is to require that all these images vanish. This particular choice is referred to as the $\delta^{-1}$ gauge.
\end{remark}

\begin{proposition}[\cite{GG_ahat}]
Given the $L_\infty$ algebra $\mathfrak h$ as defined in Proposition \ref{prop:hdefn}, and given a vector bundle $V$ of finite rank over $M$ with a connection $\nabla'$, there exists an $L_\infty$ $\mathfrak h$-module structure, over the differential dg algebra $(\Omega_M, d)$, on the de Rham complex
\[\mathfrak v\equiv\Omega_{M}\otimes \Gamma( V )\]
 such that
 \begin{enumerate}
\item the unary operation is determined by the $\Omega_M$-extended connection $\nabla'$ via 
\[l'_1: \mathfrak v[1] \to \mathfrak v[1]: f\alpha \mapsto df\alpha+(-1)^{|f|}f\nabla' \alpha\]
for all $ f\in \Omega_M$ and $\alpha\in \Gamma(V)$,
 \item up to a gauge choice, further $n$-ary brackets $\{l'_n\}_{n=2}^\infty$ are uniquely defined by the $L_\infty$ relations and the previous unary bracket.
 \end{enumerate}
 Furthermore, different choices of connections on $V$ leads to homotopic $L_\infty$ module structures. 
 \label{prop:vdefn}
\end{proposition}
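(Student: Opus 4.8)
The plan is to dualise and reduce the assertion to a Fedosov-type recursion on the jet bundle of $V$, running in parallel with the construction of $\mathfrak h$ in Proposition \ref{prop:hdefn} and Appendix \ref{appA}. By the discussion preceding the statement, an $L_\infty$ $\mathfrak h$-module structure on $\mathfrak v$ is the same datum as a splitting Abelian extension $\mathfrak h\oplus\mathfrak v$, hence dually a degree-$1$ derivation $D$ of $C^*(\mathfrak h\oplus\mathfrak v)=\widehat{{\rm Sym}}_{\Omega_M}\big((\mathfrak h\oplus\mathfrak v)^\vee[-1]\big)$ with $D^2=0$, preserving the augmentation ideal up to the ideal generated by $\mathbf c$, and restricting on $C^*(\mathfrak h)$ to the Fedosov differential $D_{\mathfrak h}$ of Proposition \ref{prop:hdefn}. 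Since the brackets $l'_n$ are linear in the $\mathfrak v$-slot, $D$ is completely determined by its action $D^V$ on the sub-$C^*(\mathfrak h)$-module generated by $\mathfrak v^\vee[-1]$, which I would identify concretely with the $\Omega_M$-extended (dual) jet bundle $\Omega_M\otimes\Gamma\big(\widehat{{\rm Sym}}(T_M^\vee)\otimes V^\vee\big)$. On this space $D^V$ is a $D_{\mathfrak h}$-connection operator of the form $D^V=-\delta^V+\nabla'+A^V$, where $\delta^V=\delta\otimes{\rm id}_{V^\vee}$ is the fibrewise contraction operator whose homotopy inverse is the $\delta^{-1}$ of the Remark, the term $-\delta^V+\nabla'$ being exactly the dual of the prescribed $l'_1$, and $A^V\in\Omega_M^{\ge1}\otimes\Gamma\big(\widehat{{\rm Sym}}^{\ge1}(T_M^\vee)\otimes{\rm End}(V)\big)$ is the unknown, filtered by symmetric degree as $A^V=\sum_{k\ge1}A^V_k$.

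Next I would impose $D^2=0$. Because $D_{\mathfrak h}^2=0$ holds already and $D^V$ is forced to be a $D_{\mathfrak h}$-connection, this reduces to $(D^V)^2=0$, which, expanded in symmetric degree, becomes a recursion
\[ \delta^V A^V_k \;=\; R_k\bigl(\nabla',\,F^{\nabla'},\,A,\,A^V_1,\dots,A^V_{k-1}\bigr),\qquad k\ge1, \]
where $A$ is the base Fedosov data from Proposition \ref{prop:hdefn}, $F^{\nabla'}$ is the curvature of $\nabla'$, and $R_1$ involves only $\nabla'$ and $F^{\nabla'}$. The two points to verify are: (i) $R_k$ is $\delta^V$-closed, proved by induction exactly as in Fedosov's argument — apply $\delta^V$ to the degree-$k$ equation and use $(\delta^V)^2=0$, the Bianchi identity for $F^{\nabla'}$, the compatibility of $D^V$ with $D_{\mathfrak h}$, and the already-established lower-degree equations; and (ii) the cohomology $H^{\ge1}$ of $\delta^V$ on $\Omega_M^\bullet\otimes\Gamma\big(\widehat{{\rm Sym}}^\bullet(T_M^\vee)\otimes{\rm End}(V)\big)$ vanishes, which follows from the ${\rm End}(V)$-twisted version of the $\delta$-Poincaré lemma used in the $\mathfrak h$ case, with the same contracting homotopy $\delta^{-1}$. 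Granting (i) and (ii), $A^V_k$ is determined by $R_k$ modulo $\ker\delta^V$; a gauge choice is precisely a choice of this ambiguity at each step (the $\delta^{-1}$ gauge takes $A^V_k=(\delta^V)^{-1}R_k$), giving existence together with the uniqueness claim in item (2). Dualising $D^V$ and extracting the component landing in ${\rm Sym}^{n-1}_{\Omega_M}(\mathfrak h[1])\otimes\mathfrak v[1]$ then recovers the bracket $l'_n$. This is the module-coefficient variant of Lemma \ref{recursivelemma} and Proposition \ref{propsolvhstru}, so I would phrase it that way rather than repeating all of Appendix \ref{appA}.

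Finally, homotopy invariance under change of connection: given connections $\nabla'_0,\nabla'_1$ on $V$, I would run the identical recursion over the base $(\Omega_{[0,1]}\otimes_{\mathbb R}\Omega_M,\ d_t\otimes 1+1\otimes d)$ with the affine interpolation $\nabla'_t=(1-t)\nabla'_0+t\nabla'_1$ (and a compatible interpolation of the base data supplied by Proposition \ref{htpyeqnhstru}), producing a $t$-family $D^{V,(t)}$ that by construction splits into a flat connection $d_t+F(t)$ along $\Delta^1=[0,1]$ and a $t$-parametrised CE differential, preserves the relevant ideal up to $\Omega_{[0,1]}\otimes_{\mathbb R}I$, and restricts at $t=0,1$ to the two module structures — i.e. exactly the data of a homotopy of $\mathfrak h$-module structures in the sense defined above. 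The main obstacle is step (i), the $\delta^V$-closedness of $R_k$: one must keep track of the signs and of the interaction between the base Fedosov differential $D_{\mathfrak h}$ (which carries the curving $\mathbf c$) and the module operator $D^V$, and formulate the inductive hypothesis strongly enough to close the induction; everything else is a routine transcription of the tangent-bundle construction to ${\rm End}(V)$-coefficients.
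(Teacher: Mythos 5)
Your proposal is correct and follows essentially the same route as the paper: the paper also proves this by dualising to the CE differential $d^{\nabla'}$ and running the Fedosov-type recursion of Appendix~\ref{appA} with the contraction $\delta^{-1}$ (compatibility by induction, uniqueness up to the gauge condition on $\delta^{-1}d^{\nabla'}_{\geq2}$), and obtains homotopy equivalence from the family version over $\Delta^1$ as in Lemma~\ref{familylem} and Proposition~\ref{htpyeqnhstru}. Your $A^V_k$ and the recursion $\delta^V A^V_k=R_k$ are exactly the components $d^{\nabla'}_{k+1}(v^a)$ and the recursive formula recorded at the end of Appendix~\ref{appA}.
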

The proof is similar to that of Proposition \ref{prop:hdefn}.

This following theorem highlights an intimate connection between the geometric deformations of $\mathfrak h\oplus \mathfrak v$ and the Lie algebroid structures on $V$. 
\begin{theorem}
Given a curved $L_\infty$ algebra $\mathfrak h$ and its module $\mathfrak v$ as defined in Propositions \ref{prop:hdefn} and \ref{prop:vdefn}, any $(\Omega_{M},d)$-linear geometric deformation from the split Abelian extension $\mathfrak h\oplus\mathfrak v$
 is uniquely determined by a Lie algebroid structure on $V$.
\label{thm1}
\end{theorem}
The proof involves the following steps:
\begin{enumerate}
\item Degree analysis identifies the nontrivial operations, which need to satisfy a defining equation and a consistency equation.
\item Recursive solution of the defining equation can be obtained, which is specified uniquely by the ``initial conditions" (Proposition~\ref{propSol}).
\item The consistency equation requires that the ``initial conditions" follow from a Lie algebroid structure on $V$ (Propositions~\ref{prop:quadrel}).
\end{enumerate}

\subsection{Step 1: Degree Analysis}

Consider possible new operations 
\[\rho_{m+n}: {\rm Sym}^m (\mathfrak h[1])\otimes {\rm Sym}^n (\mathfrak v [1])\to \mathfrak h[1], \quad n\geq1,\,m\geq0,\]
and 
\[\mu_{m+n}: {\rm Sym}^m (\mathfrak h[1])\otimes {\rm Sym}^n (\mathfrak v[1]) \to \mathfrak v[1], \quad n\geq2\,m\geq0.\]

For any element $\alpha \in \mathfrak h[1]\oplus \mathfrak v[1]\cong \Omega_M\otimes\Gamma(T_M\oplus V[1])$, we say that $\alpha$ is of {\it form degree} $k$, if $\alpha\in \Omega^k_M\otimes\Gamma(T_M\oplus V[1])$, and is of {\it internal degree} $0$ (resp. $-1$) if it takes value in $\Gamma(T_M)$ (resp. $\Gamma(V[1])$). The total degree is the sum of form degree and internal degree. Analogous degree assignments apply to tensors and duals of $ \mathfrak h[1]\oplus \mathfrak v[1]$.

The operation $\rho_{m+n}$ has a total degree of $1$ and an internal degree of $n$. Therefore, its form degree is $1-n\geq0$. Since $n\geq1$, we have $n=1$. Thus, the operation $\rho_{m+n}$ has a form degree of $0$ and corresponds to a bundle map. Similarly, the operation $\mu_{m+n}$ has a total degree of $1$ and an internal degree of $n-1$. So in total, only operations $\{\rho_{m+1}\}_{m=0}^\infty$ and $\{\mu_{m+2}\}_{m=0}^\infty$ survive, which are induced from the corresponding bundle maps\footnote{In general the $l_1$ operation on an $L_\infty$ algebra $\mathfrak g$ over a commutative dg-algebra $(A,d_A)$ needs not to be $A$-linear, we shall have $l_1(fX) = \pm f l_1(X)+d_Af X$ for $f\in A$ and $X\in \mathfrak g$. But the operation $\rho_1$ does not have nontrivial form degree, and hence is $\Omega_M$-linear. Alternatively, consider $\rho$ being an ``off-diagonal'' component of a connection on the graded bundle $T_M\oplus V[1]$.}.

To solve the constraints on the $L_\infty$ operations, we consider the CE cochain complex 
\[C^*(\mathfrak h\oplus \mathfrak v)\equiv\widehat{{\rm Sym}}(\mathfrak h^\vee[-1]\oplus \mathfrak v^\vee [-1]).\] 
We let $d^{\nabla}$ denote the CE differential on $\widehat{{\rm Sym}}(\mathfrak h^\vee[-1])$, and let $d^{\nabla'}$ denote the CE differential dualizing the $L_\infty$ module structure on $V$.
Let $\{y_i\}_{i =1}^{{\rm dim} (M)}$ and $\{v_a\}_{a =1}^{{\rm rk} (V)}$ denote local basis for bundle $T_M$ and $V$ respectively(, likewise $\{y^i\}_{i =1}^{{\rm dim} (M)}$ and $\{v^a\}_{a =1}^{{\rm rk} (V)}$ for the dual basis). Locally we have that
\[d^{\nabla} = \delta + \nabla+
\sum_{n>1} \frac1{n!}\big( (l_{n} )^i_{i_1,i_2,\cdots, i_n}\, y^{i_1} \cdots y^{i_n}  \wedge -\big) \circ \frac\partial{\partial y^i}
,\quad \delta=dx^i\frac\partial{\partial y^i}.\]
\[d^{\nabla'} = \nabla'+
\sum_{n>0} \frac1{n!} \big( (l'_{1+n} )^b_{i_1,i_2,\cdots, i_n, a}\, y^{i_1} \cdots y^{i_n} v^a \wedge -\big) \circ \frac\partial{\partial v^b}.\]  
The coefficient $(l_{n} )^i_{i_1,i_2,\cdots, i_n}$ is given by the $y_i$-component of $$l_n(y_{i_1}, y_{i_2}, \cdots, y_{i_n}),$$ and similarly for other coefficients.
The connections $\nabla$ and $\nabla'$ on $T_M$ and $V$ induce connections on the products of dual bundles $\widehat{{\rm Sym}} (T^\vee_M)$ and $\widehat{{\rm Sym}}(V^\vee[-1])$, which are again denoted by $\nabla$ and $\nabla'$ unless explicitly stated otherwise.
The $L_\infty$ structure of the semi-direct product is prescribed by the condition $(d^{\nabla}+d^{\nabla'})^2=0$, and all the higher brackets can be solved recursively as shown in Appendix \ref{appA}.

A geometrically deformed $L_\infty$ structure on $\mathfrak h\oplus \mathfrak v$ has new components in the CE differential, which consist of duals of brackets $\{\rho_{m+1}\}_{m=0}^\infty$ and $\{\mu_{m+2}\}_{m=0}^\infty$, 
\[d^\rho:= \sum_m \frac1{m!} \big( (\rho_{m+1} )^i_{i_1,i_2,\cdots, i_m, a} y^{i_1} \cdots y^{i_m} v^a \wedge -\big) \circ \frac\partial{\partial y^i},\] 
\[d^\mu:= \sum_m \frac1{m!2!}\big( (\mu_{m+2} )^c_{i_1,i_2,\cdots, i_m, a, b} y^{i_1} \cdots y^{i_m} v^a \wedge v^b\wedge -\big) \circ \frac\partial{\partial v^c}.\] 

To simplify the notations, we shall denote 
\[\frac1{n!}\big( (l_{n} )^i_{i_1,\cdots, i_{n}}\, y^{i_1} \cdots y^{i_{n}}  \wedge -\big) \circ \frac\partial{\partial y^i}, \quad n\geq2\] by $d^{\nabla}_n$, and similarly for $d^{\nabla'}_{n+1}$ ($n\geq1$), $d^{\rho}_{n+1}$ and $d^\mu_{n+2}$ ($n\geq0$).
To make notations further compatible, we shall also use $d^\nabla_1$ and $d^{\nabla'}_1$ to denote $\nabla$ and $\nabla'$ respectively.

The quadratic relation is subject to the following weight decomposition: 
\begin{equation}[d^{\nabla}+d^{\nabla'}, d^\rho + d^\mu] =0,
\label{eqnsol}\end{equation}
\begin{equation}[d^\rho + d^\mu, d^\rho + d^\mu] =0.
\label{eqnLie}\end{equation}

Furthermore, in the $n$-th component ($n\geq 3$), Equation~(\ref{eqnsol})
decomposes into
\begin{equation}\tag{\ref{eqnsol}.n} -[\delta, d^\mu_{n}+d_{n}^\rho]=\sum_{1\leq k\leq n-1}[d_k^\nabla+d^{\nabla'}_k, d^\rho_{n-k}]+\sum_{1\leq k\leq n-2}[d_k^\nabla+d^{\nabla'}_k, d^\mu_{n-k}].
\end{equation}
For consistency, we have 
\begin{equation}\tag{\ref{eqnsol}.2} -[\delta, d^\mu_{2}+d_{2}^\rho]=[d_1^\nabla+d^{\nabla'}_1, d^\rho_{1}],\end{equation}
\begin{equation}\tag{\ref{eqnsol}.1} -[\delta, d_{1}^\rho]=0.\end{equation}

We shall refer to Equation~(\ref{eqnsol}) as the defining equation, and to Equation~(\ref{eqnLie}) as the consistency equation.

\subsection{Step 2: Defining Equation}

In this subsection Equation (\ref{eqnsol}) is solved recursively using a version of Fedosov's method.
Fedosov's method was initially devised to locate a specific ``Hamiltonian" in Weyl bundle, which corresponds to a differential in symplectic case. When dealing with non-symplectic scenarios such as in our context, the task shifts to solving for derivation operators $\{d^\rho_{n}\}_{n=1}^\infty$ and $\{d^\mu_{n}\}_{n=2}^\infty$ directly. While the original method successfully employs the homotopy contraction $\delta^{-1}$ on the Weyl bundle, it is important to note that the contraction $\delta^{-1}$ does not automatically induce a natural contraction on the space of operators on $C^*(\mathfrak h\oplus \mathfrak v)$. Although a precise homotopy contraction is known to exist for the de Rham complex of formal (poly-)vector fields \cite{Dolgushev}, direct extensions of this method to the deformed components $\{d^\rho_{n}\}_{n=1}^\infty$ and $\{d^\mu_{n}\}_{n=2}^\infty$ encounter additional challenges involving some degree-related issues.

We adopt a slightly different strategy. The idea involves solving a class of elements $\{d^\rho_{n}(y^i)|i=1,\cdots,{\rm dim}(M)\}_{n=1}^\infty$ and $\{d^\mu_{n}(v^a)|a=1,\cdots, {\rm rk}(V)\}_{n=2}^\infty$ recursively. Through this process, we can acquire the essential information to determine the differential operators. The homotopy contraction $\delta^{-1}$ is still handy, and the gauge choices now become unique\footnote{To appreciate the uniqueness, compare the current situation with Corollary \ref{gaugechoice} in Appendix \ref{appA}.}. For the recursion, It is imperative to address the compatibility and solvability of this set of recursive equations as a preliminary step.

For the set of Equations $\{(\ref{eqnsol}.n)\}_{n=1}^\infty$, the left-hand sides are $\delta$-exact.  
We say that Equation $(\ref{eqnsol}.n)$ is {\it compatible} if its right-hand side commutes with $\delta$. Say that Equation $(\ref{eqnsol}.n)$ is {\it solvable} if its solution exists.

\begin{lemma}
Suppose Equation $(\ref{eqnsol}.k)$ is solvable for each $k\leq n$, then Equation $(\ref{eqnsol}.n+1)$ is compatible.
\label{compatib}
\end{lemma}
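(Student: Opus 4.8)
The plan is to prove compatibility by the standard Fedosov-type bootstrap: assume all lower equations $(\ref{eqnsol}.k)$ with $k\le n$ have been solved, so that the operators $\{d^\rho_k\}_{k\le n}$ and $\{d^\mu_k\}_{k\le n}$ are now fixed and satisfy the corresponding identities, and then show that the right-hand side $R_{n+1}:=\sum_{1\le k\le n}[d^\nabla_k+d^{\nabla'}_k,d^\rho_{n+1-k}]+\sum_{1\le k\le n-1}[d^\nabla_k+d^{\nabla'}_k,d^\mu_{n+1-k}]$ commutes with $\delta$. First I would package notation: write $D:=d^\nabla+d^{\nabla'}$ (so $D=\delta+\sum_{k\ge 1}(d^\nabla_k+d^{\nabla'}_k)$ with $D^2=0$ being the undeformed $L_\infty$ relation), and $E_{\le n}:=\sum_{k=1}^n(d^\rho_k+d^\mu_k)$ for the partially-constructed deformation. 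The equations $\{(\ref{eqnsol}.k)\}_{k\le n}$ assembled together say exactly that $[D,E_{\le n}]$ has no components of weight $\le n$ in the $y$-filtration — equivalently, $[D,E_{\le n}]$ is $\delta$-closed through order $n$ (since the leading term $-[\delta,\,\cdot\,]$ is what cancels the lower-order pieces). I would state this as a sub-claim and verify it by re-reading the derivation of $(\ref{eqnsol}.n)$.

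Next comes the actual computation. Apply $\mathrm{ad}_D=[D,-]$ to the identity $[D,E_{\le n}]=(\text{terms of weight}>n)$. On one hand, $[D,[D,E_{\le n}]]=\tfrac12[[D,D],E_{\le n}]=0$ by the graded Jacobi identity and $D^2=0$. On the other hand, expanding $[D,[D,E_{\le n}]]$ by weight and picking out the weight-$(n+1)$ component: the only contributions are $[\delta, (\text{weight-}(n+1)\text{ part of }[D,E_{\le n}])]$ plus $[\sum_{k\ge1}(d^\nabla_k+d^{\nabla'}_k),\,[D,E_{\le n}]]$ restricted to weight $n+1$, and this second bracket only sees the weight-$\le n$ part of $[D,E_{\le n}]$, which vanishes by the sub-claim. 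Hence $[\delta,\,\text{weight-}(n+1)\text{ component of }[D,E_{\le n}]]=0$. Finally, the weight-$(n+1)$ component of $[D,E_{\le n}]$ is precisely $-R_{n+1}+[\delta,(\text{stuff of weight }n+1)]$ — more carefully, $[D,E_{\le n}]_{n+1}=\sum_{1\le k\le n}[d^\nabla_k+d^{\nabla'}_k,d^\rho_{n+1-k}+d^\mu_{n+1-k}]$ up to a $\delta$-exact correction that is not yet present since $d^\rho_{n+1},d^\mu_{n+1}$ are unconstructed; so its $\delta$-closedness is exactly the compatibility of $(\ref{eqnsol}.n+1)$. One must also check the degenerate low cases (that $(\ref{eqnsol}.2)$'s right side $[d^\nabla_1+d^{\nabla'}_1,d^\rho_1]$ is $\delta$-closed given $(\ref{eqnsol}.1)$, i.e. $[\delta,d^\rho_1]=0$), which follows by the same Jacobi trick applied to $[D,d^\rho_1]$.

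I expect the main obstacle to be bookkeeping rather than conceptual: getting the weight grading and the sign conventions in the graded Jacobi identity exactly right, and verifying cleanly that the weight-$(n+1)$ part of $[D,E_{\le n}]$ coincides with the claimed $R_{n+1}$ up to $\delta$-exact terms (the operators $d^\rho$ and $d^\mu$ have different inner degrees and act on different summands $\partial/\partial y^i$ versus $\partial/\partial v^c$, so one must check that the cross-brackets land where expected and that nothing of weight $n+1$ is dropped). A secondary subtlety is that $\delta$ itself has weight $-1$ and is not $(\Omega_M,d)$-linear in the naive sense on the module part, so I would isolate the purely algebraic statement ``$\delta^2=0$, $[\delta,d^\nabla_k]$ contributes only to weight-$(k-1)$ pieces'' as a lemma invoked from Appendix \ref{appA}. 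Once those are in hand, the argument is the familiar Fedosov induction and the Jacobi identity does all the work.
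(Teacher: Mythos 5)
Your argument is correct, but it is packaged differently from the proof the paper has in mind. The paper omits the proof of Lemma~\ref{compatib} and points to the analogous compatibility lemma in Appendix~\ref{appA}, whose argument applies $\delta$ to the right-hand side, uses the graded Jacobi identity to move $\delta$ inside each bracket $[d^\nabla_k+d^{\nabla'}_k,d^\rho_{n+1-k}+d^\mu_{n+1-k}]$, substitutes the already-solved lower equations for $[\delta,\cdot]$, and then cancels the resulting double brackets term by term via a case analysis over partitions. You instead assemble $D=\delta+\sum_{k\geq1}(d^\nabla_k+d^{\nabla'}_k)$ and $E_{\leq n}=\sum_{k\leq n}(d^\rho_k+d^\mu_k)$, observe that the solved equations say exactly that the components of $[D,E_{\leq n}]$ of order $\leq n$ vanish and that the order-$(n+1)$ component is precisely $R_{n+1}$, and then extract the order-$(n+1)$ piece of $0=[D,[D,E_{\leq n}]]=\tfrac12[[D,D],E_{\leq n}]$ to get $[\delta,R_{n+1}]=0$; the grading argument is legitimate because every term changes the $v$-degree by exactly $+1$ and the order is read off from the change in $y$-degree. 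This buys you a single application of the Jacobi identity in place of the partition bookkeeping, and it is a shortcut genuinely available here (unlike in Appendix~\ref{appA}, where $d^\nabla$ is itself under construction) because $(d^\nabla+d^{\nabla'})^2=0$ is already established for the undeformed extension. Two harmless blemishes: the sign in your intermediate claim that $[D,E_{\leq n}]_{n+1}$ equals $-R_{n+1}$ up to $\delta$-exact terms should just be $+R_{n+1}$ with no correction (as you note, $d^\rho_{n+1},d^\mu_{n+1}$ do not yet appear), and the worry about $\delta$ failing $\Omega_M$-linearity is unfounded, since $\delta=dx^i\,\partial/\partial y^i$ is $\Omega_M$-linear up to Koszul signs; only the connection terms fail this.
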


The proof is essentially the same as the compatibility result for equations of $\{d^\nabla_k\}_{k=2}^\infty$ in Appendix \ref{appA}, therefore it is omitted.

\begin{proposition}
The extended differential $d^\mu+d^\rho$ can be solved recursively and uniquely from equations 
$\{(\ref{eqnsol}.n)\}_{n=1}^\infty$ 
for any fixed initial data
$\rho_1:\mathfrak v [1]\to \mathfrak h[1]$ and $\mu_2: {\rm Sym}^2 (\mathfrak v[1]) \to \mathfrak v[1]$.
\label{propSol}
\end{proposition}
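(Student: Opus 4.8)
The plan is to run a recursive argument on the weight/order index $n$, exactly parallel to the recursive construction of $\{d^\nabla_n\}$ in Appendix \ref{appA}, but now solving for the ``elements'' $\rho^i_n := d^\rho_n(y^i)$ and $\mu^a_n := d^\mu_n(v^a)$ rather than for the full operators. First I would observe that equation $(\ref{eqnsol}.1)$ reads $[\delta,d^\rho_1]=0$, which is automatic because $d^\rho_1$ has zero form degree (it is the bundle map $\rho_1$), so $d^\rho_1$ commutes with $\delta=dx^i\partial/\partial y^i$; and $d^\mu_2$ is likewise a prescribed bundle map of zero form degree. So the base of the recursion is simply the free choice of $d^\rho_1$ and $d^\mu_2$, as claimed.

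For the inductive step, suppose $d^\rho_k$ ($k\le n$) and $d^\mu_k$ ($k\le n$) have been determined so that $(\ref{eqnsol}.k)$ holds for all $k\le n$. Then Lemma \ref{compatib} tells us the right-hand side $R_{n+1}$ of $(\ref{eqnsol}.n{+}1)$ is $\delta$-closed. Since the left-hand side is $-[\delta,\,d^\mu_{n+1}+d^\rho_{n+1}]$ and both unknown operators raise form degree by exactly one relative to their ``element'' values (they are $y^{i_1}\cdots\wedge-$ type derivations), I would apply the homotopy contraction $\delta^{-1}$ evaluated on the generating elements: set $d^\rho_{n+1}(y^i)$ and $d^\mu_{n+1}(v^a)$ to be $-\delta^{-1}$ applied to the corresponding components of $R_{n+1}$, i.e. to $R_{n+1}(y^i)$ and $R_{n+1}(v^a)$ respectively. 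The standard identity $\delta\delta^{-1}+\delta^{-1}\delta = \mathrm{id}$ on positive form-plus-jet degree, together with $\delta$-closedness of $R_{n+1}(y^i)$ and $R_{n+1}(v^a)$ (which follows from $\delta$-closedness of $R_{n+1}$ as an operator, since $[\delta,\cdot]$ acts on each generator), then gives $-[\delta, d^\mu_{n+1}+d^\rho_{n+1}] = R_{n+1}$ on generators, hence as operators since both sides are derivations of the CE algebra determined by their values on generators. This both solves $(\ref{eqnsol}.n{+}1)$ and shows the solution is unique up to the kernel of $[\delta,-]$ on the relevant degree; that kernel is trivial in the bidegrees occurring for $\rho_{n+1}, \mu_{n+1}$ with $n\ge 1$ (the new brackets carry strictly positive $\mathfrak v$-weight and at least one $\mathfrak h$-input, putting their elements in strictly positive $\delta$-degree), which pins down $d^\rho_{n+1}+d^\mu_{n+1}$ uniquely.

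I would then close the induction by noting that once all $d^\rho_n$, $d^\mu_n$ are defined in this way, equation $(\ref{eqnsol})$ holds weight-by-weight by construction, so $d^\nabla+d^{\nabla'}+d^\rho+d^\mu$ squares to zero modulo the weight-$\ge 1$ part governed by $(\ref{eqnLie})$ — the latter is the separate ``consistency/Lie'' condition deferred to Section \ref{algoid} and is not part of this proposition. A small bookkeeping point to handle carefully is that $d^\mu_n$ (for $n\ge 2$) and $d^\rho_n$ live on different target summands ($\mathfrak v[1]$ versus $\mathfrak h[1]$) and have different symmetry prefactors ($1/(m!2!)$ versus $1/m!$), so the projection of $R_{n+1}$ onto its $\partial/\partial y^i$- and $\partial/\partial v^c$-parts must be taken before applying $\delta^{-1}$; but $\delta$ and $\delta^{-1}$ act diagonally with respect to this splitting, so no interference occurs.

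The main obstacle I anticipate is precisely the verification that $\delta^{-1}$ supplies a genuine operator-level solution rather than merely a fix on generators: one must check that the derivation extending $-\delta^{-1}R_{n+1}|_{\text{gen}}$ satisfies $-[\delta,-] = R_{n+1}$ as an identity of derivations, which uses that $[\delta,-]$ is itself a derivation-valued operation and that $R_{n+1}$, being built from graded commutators of lower-order derivations, is a derivation — so equality on algebra generators propagates. The secondary subtlety is confirming the vanishing of $\ker[\delta,-]$ in exactly the bidegrees of the new brackets (to get uniqueness), which is the same Hodge-to-de-Rham-type acyclicity statement used for $\mathfrak h$ in Appendix \ref{appA}, now applied in the presence of the extra $v^a$-generators; this is routine but deserves an explicit degree count.
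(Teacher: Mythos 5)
Your proposal is correct and follows essentially the same route as the paper: solve for the generator values $d^\rho_{n+1}(y^i)$ and $d^\mu_{n+2}(v^a)$ recursively by applying $\delta^{-1}$ to the right-hand side of each weight component of equation (\ref{eqnsol}), use Lemma \ref{compatib} for $\delta$-closedness, note the initial equations for $d^\rho_1$, $d^\mu_2$ are vacuous, and deduce uniqueness from the invertibility of $\delta$ on the components where these elements live (the paper solves the $d^\rho$ recursion first and then the $d^\mu$ recursion, but this ordering difference is immaterial since the $\partial/\partial y^i$-components of (\ref{eqnsol}) involve only $d^\rho$ as unknowns).
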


\begin{proof}

Locally, as the commutative dg-algebra $C^*(\mathfrak g)$ is generated by $\{y^i, v^a\}$'s, to solve for $d^\rho$ and $d^\mu$, one needs to solve for $d^\rho(y^i)$ and $d^\mu(v^a)$, which are the coefficients in front of $\frac\partial{\partial y^i}$ and $\frac\partial{\partial v^a}$ respectively.

The quadratic relation decomposes into:
\begin{equation}
-[\delta,d^\rho_{n+1}](y^i)=\sum_{1\leq m\leq n} [d^{\nabla}_m, d^\rho_{n-m+1}](y^i)+\sum_{1\leq m\leq n} d^{\nabla'}_m(d^\rho_{n-m+1}(y^i)),
\label{soleleeqn1}
\end{equation}
likewise for $d^\mu$.
 Since $[\delta,d^\rho_{n+1}](y^i) = \delta(d^\rho_{n+1}(y^i))$, the above equations for 
\[\{d^\rho_{n+1}(y^i)\}_{n\in \mathbb Z_{\geq0}}\subset \Omega^0_M\otimes \Gamma\big(\widehat{{\rm Sym}}( T^\vee_M)\otimes  V^\vee[-1]\big)\]
can be solved by a homotopy contraction given by $\delta^{-1}$.
It is easy to check that 
\[\delta\circ \delta^{-1}+ \delta^{-1}\circ\delta = id\, {\rm on}\,\, \Omega^p\otimes \Gamma\big({\rm Sym}^q( T_M^\vee)\otimes \widehat{{\rm Sym}}^\bullet ( V^\vee[-1])\big)\]
for\footnote{Due to the failure in the case $p=q=0$, such equations have no prediction on $d^\rho_{0+1}$ and $d^\mu_{0+2}$.} $p+q>0$.

We claim that, suppose $\{d^\rho_k\}_{k=1}^n$ are solutions to Equations $\{(\ref{eqnsol}.k)\}_{k=1}^n$, then
\begin{equation}
-d^\rho_{n+1}(y^i) = \sum_{1\leq m\leq n} \delta^{-1}[d^{\nabla}_m, d^\rho_{n-m+1}](y^i)+\sum_{1\leq m\leq n} \delta^{-1}d^{\nabla'}_m(d^\rho_{n-m+1}(y^i)),
\label{soleleeqn2}
\end{equation}
for all $y^i$ give a solution for $d^\rho_{n+1}$ to Equation $(\ref{eqnsol}.n+1)$.

In order to check that, observe $d_m^{\nabla'}(y^i) = 0$, 
\[ \sum_{1\leq m\leq n}  \delta \left([d_m^\nabla, d^\rho_{n-m+1}](y^i) \right)=\sum_{1\leq m\leq n} [\delta ,[d_m^\nabla, d^\rho_{n-m+1}]](y^i) + [d_m^\nabla, d^\rho_{n-m+1}](\delta y^i),\]
\[ \sum_{1\leq m\leq n}  \delta \left(d_m^{\nabla'}(d^\rho_{n-m+1}(y^i) )\right)=\sum_{1\leq m\leq n} [\delta ,[d_m^{\nabla'}, d^\rho_{n-m+1}]](y^i) + [d_m^{\nabla'}, d^\rho_{n-m+1}](\delta y^i) .\]
All terms on the right-hand sides vanish by Lemma~\ref{compatib}.
So 
\begin{eqnarray*} &&\delta\delta^{-1}\left(\sum_{1\leq m\leq n} [d^{\nabla}_m, d^\rho_{n-m+1}](y^i)+ d^{\nabla'}_m(d^\rho_{n-m+1}(y^i))\right) \\
&&= \sum_{1\leq m\leq n} [d^{\nabla}_m, d^\rho_{n-m+1}](y^i)+ d^{\nabla'}_m(d^\rho_{n-m+1}(y^i)),
\end{eqnarray*}
which is $-\delta\left(d^\rho_{n+1}(y^i)  \right)\equiv -[\delta, d^\rho_{n+1}](y^i)$.

The induction starts with an arbitrary choice for $d^\rho_1$, which is the linear dual of $\rho_1$ and satisfies Equation $(\ref{eqnsol}.1)$ trivially.

Fixing the operator $d_1^\rho$, the uniqueness of recursion that generates the entire differential $d^\rho$ is established based on the uniqueness of $d^\rho_{n}(y^i)$ for each $i$ and $n\geq2$. The latter can be attributed to the property that $\delta^{-1}$ and $\delta$ act as inverse operations between $ \Omega^0\otimes \Gamma\big( \widehat{{\rm Sym}}^{\geq1}( T_M^\vee)\otimes V^\vee[-1]\big)$ and $ \Omega^1\otimes \Gamma\big( \widehat{{\rm Sym}}^{\geq0}( T_M^\vee)\otimes V^\vee[-1]\big)$. Hence the solution given by (\ref{soleleeqn2}) is ``isomorphically" transformed from Equation (\ref{soleleeqn1}).

Now with solutions for $d^\rho$, a recursion can be applied to solve for $d^\mu$, which involves choosing a set of local basis $\{v^a\}$ for bundle $V$ and solving
\begin{eqnarray*}-\delta(d^\mu_{n+2}(v^a))=&&\sum_{1\leq m\leq n} [d^{\nabla'}_m, d^\mu_{n-m+2}](v^a)+d^{\rho}_{m}(d^{\nabla'}_{n-m+2}(v^a)) \\
&&+ \sum_{1\leq m< n} d^{\nabla}_m(d^\mu_{n-m+2})(v^a) \end{eqnarray*}
for $n\geq1$. The initial condition
\[-[\delta, d^\mu_2] = 0\]
is trivially satisfied for any choice of $d^\mu_2$, which is the linear dual of $\mu_2$. The details are similar to the previous steps and hence are omitted.
\end{proof}

\subsection{Step 3: Consistency Equation}
\label{algoid}

Equation (\ref{eqnsol}) alone has been solved recursively and uniquely with arbitrary choices of $d_1^\rho$ and $d_2^\mu$ as initial data. 
Further constraints may come from Equation (\ref{eqnLie}). To obtain a geometric implication of those constraints, we shall first relate $d_1^\rho$ and $d_2^\mu$ to explicit bundle operations.
\begin{proposition}
Given a geometrically deformed $L_\infty$ algebra in $\delta^{-1}$ gauge\footnote{The $\delta^{-1}$ gauge fixes the $L_\infty$ structure on $\mathfrak h$ and the module structure on $\mathfrak v$ entirely by picking up connections on $T_M$ and $V$ respectively, see Corollary \ref{gaugechoice} in Appendix \ref{appA}.},
if the initial data $\rho\equiv\rho_1$ and $\mu\equiv\mu_2$ are specified, then
\[\rho_{1+1}(X,\alpha) =\nabla_X \rho(\alpha) -\rho(\nabla'_X \alpha),\]
\[\mu_{1+2}(X, \alpha,\beta) = -\big({\nabla'}_{X}^{(1)}\mu\big)(\alpha,\beta)+\frac12R_{\nabla'}(X,\rho(\alpha))(\beta) - \frac12R_{\nabla'}(X,\rho(\beta))(\alpha),\]
for all $X\in\Gamma(T_M)$ and $\alpha\in\Gamma(V)$,
where
\[\big({\nabla'}_{X}^{(1)}\mu\big)(\alpha,\beta) = \nabla'_X\mu(\alpha,\beta) -\mu(\nabla'_X\alpha,\beta)-\mu(\alpha,\nabla'_X\beta).\]
\end{proposition}

We now consider quadratic constraints imposed by Equation (\ref{eqnLie}).

\begin{proposition}
Let $\left(\sum_{n=0}^\infty d^\rho_{n+1}, \sum_{n=0}^\infty d^\mu_{n+2}\right)$ be a solution to Equation~(\ref{eqnsol}) constructed in the proof of Proposition \ref{propSol}, and let the pair $(\rho_{1},\mu_{2})$ be its initial data.  
The following statements are equivalent:
\begin{enumerate}[label=\roman*)]
\item Equation \[(d^\mu+d^\rho)^2=0\] admits a solution via $\left(\sum_{n=0}^\infty d^\rho_{n+1}, \sum_{n=0}^\infty d^\mu_{n+2}\right)$. \label{con1}
\item The pair $(\rho_{1},\mu_{2})$ satisfies conditions:
\begin{equation}[d^\mu_2+d^\rho_2, d^\rho_1]=0,\label{enueqn1}\end{equation}
\begin{equation}\frac12[d^\mu_2,d^\mu_2]+d^\rho_1(d^\mu_3)=0.\label{enueqn2}\end{equation}\label{con2}
\item The pair $(\rho_{1},\mu_{2})$ defines the anchor and torsion, respectively, of a Lie algebroid structure on $V$. \label{con3}
\end{enumerate}
\label{prop:quadrel}
\end{proposition}

\begin{proof}
The nontrivial steps involve showing that $\ref{con1}\Leftarrow \ref{con2}\Rightarrow \ref{con3}$.
Firstly, we show that statement $\ref{con2}$ implies $\ref{con3}$. 
On $C^1( \mathfrak h)$, Equation (\ref{enueqn1}) gives 
\begin{eqnarray*}&& -\mu_{2+0}(v_b, v_c )^a  \rho_{1+0} (v_a)^i  - (\rho_{1+0} (v_b)^k(\rho_{1+1} (y_k, v_c)^i \\
&&+ \rho_{1+0} (v_c)^k \rho_{1+1} (y_k, v_b)^i  =0,\quad\forall\, b,c,i.
\end{eqnarray*}
Consider 
\[C_{bc}\,^a\equiv (\Gamma')_{i}\,_c\,^a  (\rho)_b\,^i - (\Gamma')_{i}\,_b\,^a  (\rho)_c\,^i- \mu_{bc}\,^a ,\]
this defines the structure constants of an $\mathbb R$-bilinear skew-symmetric bracket operation on $\Gamma(V)$ by $[v_b, v_c ] = C_{bc}^a v_a$, and $[f \alpha, \beta ] = f[\alpha, \beta ]-\rho(\beta)(f)\alpha $.
So Equation (\ref{enueqn1}) gives the compatibility between the bracket operation on $\Gamma(T_M)$ and that of $\Gamma(V)$,
\[   [\rho(v_b), \rho(v_c)]=\rho ( \nabla'_{\rho (v_b)} v_c-\nabla'_{\rho (v_c)} v_b-\mu(v_b, v_c ) ) = \rho ( [v_b, v_c ] ).\]

Equation (\ref{enueqn2}) gives
\begin{eqnarray*} &&(\frac12\mu_{0+2} )^a_{ b,c}  v^b \wedge v^c\wedge  \frac\partial{\partial v^a}  \big( (\frac12\mu_{0+2} )^d_{ e,f} v^e \wedge v^f\big) \\&&+(\rho_{0+1} )^i_{a}   v^a \wedge\big(\frac12(\mu_{1+2} )^d_{ i,b,c}  v^b \wedge v^c\big) =0.
\end{eqnarray*}
This is (, after taking the dual,) equivalent to the condition
\[\sum_{a,b,c{\rm\,cyclic}}  \mu(\mu (v_a, v_b), v_c ) 
+\big({\nabla'}_{\rho (v_a)}^{(1)}\mu\big)(v_b,v_c)-R_{\nabla'}(\rho (v_a),\rho(v_b))(v_c) =0,\]
which is the Bianchi identity for Lie algberoids (see Equation $(61)$ in \cite{Fernandes}). Note that although $\mu_{1+2} $ depends on the gauge conditions\footnote{See Appendix \ref{appB} for an explicit formula in arbitrary gauge.} coming from $l_2$ and $l'_2$, such gauge dependence does not survive under antisymmetrization of inputs in $\Gamma(V[1])$. 

It remains to show that statement $\ref{con2}$ implies statement $\ref{con1}$.
By the nature of derivation, this is true iff $(d^\mu+d^\rho)^2(v^a) = (d^\mu+d^\rho)^2(y^i) = 0$ for all $a\in \{1,\cdots, {\rm rk}(V)\}$ and $i\in \{1,\cdots, {\rm dim}(M)\}$.
For higher brackets, one can apply a recursive argument as before, which we now describe briefly.

Consider elements such as
\[\sum_{p+q-1=n+4} \frac12 [d^\mu_{p}+d^\rho_{p}, d^\mu_{q}+d^\rho_{q}](v^a)\] and \[\sum_{p+q-1=n+3} \frac12 [d^\mu_{p}+d^\rho_{p}, d^\mu_{q}+d^\rho_{q}](y^i)\]
where $n\geq0$.
By degree analysis, they are in the component \[\Omega^0\otimes \Gamma\big(\widehat{{\rm Sym}}^{\geq1} ( T^\vee_M) \otimes \widehat{{\rm Sym}}( V^\vee[-1])\big).\] As a linear map, $\delta$ is invertible when restricted to the above component. So showing the vanishing of those elements is the same as showing that 
\begin{eqnarray*}
&&\sum_{p+q-1=n+4} \frac12 \delta\left( [d^\mu_{p}+d^\rho_{p}, d^\mu_{q}
+d^\rho_{q}](v^a)\right) \\
&=&\sum_{p+q-1=n+4} \frac12 [\delta, [d^\mu_{p}+d^\rho_{p}, d^\mu_{q}+d^\rho_{q}]](v^a) =0
\end{eqnarray*}
and
\begin{eqnarray*} &&\sum_{p+q-1=n+3} \frac12 \delta\left( [d^\mu_{p}+d^\rho_{p}, d^\mu_{q}+d^\rho_{q}](y^i) \right)\\
&=& \sum_{p+q-1=n+3} \frac12 [\delta, [d^\mu_{p}+d^\rho_{p}, d^\mu_{q}+d^\rho_{q}]](y^i) = 0. 
\end{eqnarray*}
By the (graded) Jacobi identity of the derivation operators and Equation (\ref{eqnsol}), the commutator of type $[\delta,[d^\mu_{p}+d^\rho_{p}, d^\mu_{q}+d^\rho_{q}]]$ can be rewritten as a sum of terms such as $-[d^\nabla_s+d^{\nabla'}_s,[d^\mu_{t}+d^\rho_{t}, d^\mu_{u}+d^\rho_{u}]]$, with $t+u\leq p+q-1$. Now an inductive argument can be applied here.
\end{proof}

The proof for Theorem~\ref{thm1} follows from Proposition~\ref{propSol} and Proposition~\ref{prop:quadrel}. 

\begin{example}
A weak Lie algebra bundle is a Lie algebroid with a vanishing anchor. If $\rho: V\to T_M$ vanishes, so are $\{\rho_{n+1}\}_{n=0}^\infty$ by the recursive relation. 

A Lie algebra bundle is a weak Lie algebra bundle whose Lie algebra structure is fixed with respect to local trivializations. As shown in \cite{CF}, this is equivalent to the condition that there exists a connection $\nabla'$ on $V$ such that 
\begin{eqnarray*}
&&\nabla'[-,-] - [\nabla'(-),-]-[-,\nabla'(-)]\\
&=&\nabla'\mu(-,-) - \mu(\nabla'(-),-) - \mu(-,\nabla'(-))=0.
\end{eqnarray*}
Therefore $\mu_{1+2}$, which depends on the covariant derivative of $\mu$, vanishes. Furthermore, in the $\delta^{-1}$ gauge, for any $v^a$,  
\[d^\mu_{2+2}(v^a)=-\delta^{-1}\left(d^{\nabla'}_2(d^\mu_{2}(v^a))+ d^{\mu}_2(d^{\nabla'}_{2}(v^a))\right)=0.\]
Similarly one can show that all operations in $\{\mu_{n+2}\}_{n=1}^\infty$ vanish in $\delta^{-1}$ gauge by an inductive argument.
\end{example}

\subsection{Family version and homotopy equivalence}

By previous results, a split Abelian extension $L_\infty$ structure on $\Omega_M\otimes \Gamma\big( T_M[-1]\oplus V\big)$ is determined by the choices of connections $(\nabla, \nabla')$ on $T_M\oplus V[1]$ and the gauge condition on the $\delta^{-1}$ image of CE differential. We have shown that different $L_\infty$ structures are ultimately homotopic, see Appendix \ref{appA}. 

Building on this, in the geometrically deformed case, 
the correspondence established in Theorem~\ref{thm1} extends naturally to the homotopy category. Specifically, the map between geometric deformations and Lie algebroid structures respects homotopy equivalence of $L_\infty$ algebras. We demonstrate this by analyzing the homotopy type of the Fedosov differential governing the family of $L_\infty$ structures.

\begin{proposition}[Homotopy invariance of geometric deformation]
Any two geometrically deformed $L_\infty$ structures on $\Omega\otimes \Gamma\big( T_M[-1]\oplus V\big)$ with the same underlying Lie algebroid structure are
$n$-homotopic for arbitrary $n\geq0$.
\label{prophtpy}
\end{proposition}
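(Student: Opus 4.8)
The plan is to prove this by constructing an explicit $t$-parametrised family of Fedosov-type differentials interpolating between the two given structures, following the template already established for $\mathfrak{h}$ in Appendix \ref{appA} (the homotopy equivalence \textbf{Proposition~\ref{htpyeqnhstru}}). Two geometrically deformed structures with the same underlying Lie algebroid on $V$ differ only in three pieces of data: the connection $\nabla$ on $T_M$, the connection $\nabla'$ on $V$, and the gauge choice fixing the $\delta^{-1}$-images of the higher dual brackets. For each piece, I would linearly interpolate: given data $(\nabla_0,\nabla'_0,\text{gauge}_0)$ and $(\nabla_1,\nabla'_1,\text{gauge}_1)$, set $\nabla^{(t)} = (1-t)\nabla_0 + t\nabla_1$ (a connection for each $t\in[0,1]$ since the space of connections is affine), likewise $\nabla'^{(t)}$, and interpolate the gauge data in the same affine manner on $\delta^{-1}\big(\Omega_M^p\otimes\Gamma({\rm Sym}^q(T_M^\vee)\otimes\widehat{{\rm Sym}}(V^\vee[-1]))\big)$.

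First I would invoke \textbf{Proposition~\ref{propSol}} and \textbf{Proposition~\ref{prop:quadrel}} fibrewise: for each fixed $t$, the data $(\nabla^{(t)},\nabla'^{(t)})$ together with the interpolated gauge and the \emph{fixed} Lie algebroid structure on $V$ determine, by the recursion in \textbf{Proposition~\ref{propSol}}, a unique collection $\{d^{\nabla,(t)}_n, d^{\nabla',(t)}_n, d^{\rho,(t)}_n, d^{\mu,(t)}_n\}$, and by \textbf{Proposition~\ref{prop:quadrel}} the resulting $d^{(t)}_{CE}:= d^{\nabla,(t)}+d^{\nabla',(t)}+d^{\rho,(t)}+d^{\mu,(t)}$ squares to zero because the ``initial data'' $\rho_{1+0},\mu_{2+0}$ still encode the same Lie algebroid. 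Since all ingredients of the recursion ($\delta^{-1}$, the interpolated connections, the interpolated gauge projections, the fixed structure tensors) depend smoothly — indeed polynomially — on $t$, the family $d^{(t)}_{CE}$ is a smooth family of CE differentials on $\Omega_M\otimes\Gamma(T_M[-1]\oplus V)$ with the correct endpoints. Second, I would upgrade this to a genuine homotopy in the sense of the definition preceding \textbf{Proposition~\ref{prop:hdefn}}: work over $(\Omega_{[0,1]}\otimes_{\mathbb R}\Omega_M, d_t\otimes 1 + 1\otimes d)$ and write the total differential as $d_t + F(t) + d^{(t)}_{CE}$, where $F(t)\in \Omega^1_{[0,1]}\otimes \mathrm{Der}(C^*(\mathfrak h\oplus\mathfrak v))$ is the correction term making $(d_t + F(t))$ a flat connection and making the full operator square to zero. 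The existence of such $F(t)$ is exactly the content of the abstract ``homotopy type of Fedosov differential'' argument referenced in the statement: because $d^{(t)}_{CE}$ is built recursively via $\delta^{-1}$ from $t$-dependent initial data, the derivative $\partial_t d^{(t)}_{CE}$ is $\delta^{-1}$-exact up to lower-weight terms, which lets one solve for $F(t)$ order by order in weight, again using that $\delta\circ\delta^{-1}+\delta^{-1}\circ\delta = \mathrm{id}$ off the $(p,q)=(0,0)$ component. Conditions (1)–(3) of the homotopy definition are then checked directly: the splitting into connection part and CE part is by construction, the preservation of the ideal $\Omega_{[0,1]}\otimes\widehat{{\rm Sym}}^{\geq1}$ up to $\Omega_{[0,1]}\otimes I$ follows since each $d^{(t)}$ piece preserves it (the $t$-dependent curving still lies in $\Omega_M^{\geq1}\otimes\Gamma(T_M)$), and the endpoint condition holds by the choice of interpolation.

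The main obstacle I anticipate is the second step — constructing $F(t)$ and verifying flatness $[d_t+F(t), d_t+F(t)]=0$ simultaneously with $[d_t+F(t), d^{(t)}_{CE}]=0$. This is the place where one genuinely needs the recursive/homotopical structure rather than just fibrewise facts: one must show that the obstruction to extending the infinitesimal deformation $\partial_t d^{(t)}_{CE}$ to a flat family vanishes, and this is where the acyclicity of $\mathfrak h$ in positive degrees (cited in the excerpt) and the $\delta^{-1}$-contraction do the work. Concretely, differentiating the recursion (\ref{soleleeqn2}) in $t$ shows $\partial_t d^{\rho,(t)}_{n+1}(y^i)$ is expressed through $\delta^{-1}$ applied to terms involving $\partial_t$ of lower-order pieces and of the connections; assembling these into a derivation $G(t)$ with $\partial_t d^{(t)}_{CE} = [d^{(t)}_{CE}, G(t)]$ (a homotopy-trivial infinitesimal deformation), one then takes $F(t) = G(t)\,dt$ and checks the flatness/compatibility conditions by the graded Jacobi identity exactly as in the proof of \textbf{Proposition~\ref{prop:quadrel}}. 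I would flag that the gauge-interpolation contributes an extra term to $G(t)$ but is handled identically, and that the argument is formally the same as the undeformed case in Appendix \ref{appA} with the Lie algebroid tensors carried along as inert coefficients; hence, as in the earlier omitted proofs, the bulk of the verification can be stated as ``analogous to \textbf{Proposition~\ref{htpyeqnhstru}}'' with only the new $d^\rho, d^\mu$ contributions spelled out.
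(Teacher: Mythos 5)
Your proposal is correct and takes essentially the same approach as the paper: the paper establishes the family statement (Proposition~\ref{gerhtpy}) by solving the connection correction $F^0(t)$ recursively from the $[F^0(t),\delta]$-component of the nilpotency equation and verifying the remaining compatibility equations inductively via Lemma~\ref{F1van}, which matches your $\delta^{-1}$-recursion for $F(t)$ together with the graded-Jacobi induction. Proposition~\ref{prophtpy} then follows, as in your argument, by applying this to an interpolating family of connections and gauge conditions with the Lie algebroid structure held fixed.
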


If one stays in $\delta^{-1}$ gauge and works in the contexts of infinite jets, the result follows from the formal exponential map as in \cite{GG_l}. The homotopy given in that context comes necessarily from a genuine isomorphism of dg algebras. We consider the more computable Fedosov's approach, with arbitrary gauge choice, by working with a family version of $L_\infty$ algebras. The main result is Proposition \ref{gerhtpy}, from which Proposition \ref{prophtpy} can be deduced. 

Before giving the proof, we start by describing the geometrically deformed $L_\infty$ structure parameterized by the $n$-simplex $\Delta^n$.
Consider a family of Lie algebroid structures and connections, $(\rho_t, [-,-]_t, \nabla_t, \nabla'_t)$ where $t\in \Delta^{n}$, and consider a family of gauge conditions $(\phi_t,\psi_t)\in \Gamma\big( {\rm Sym}^{\geq3}(T_M^\vee )\otimes T_M \big)\oplus\Gamma\big( {\rm Sym}^{\geq2}(T_M^\vee )\otimes V^\vee[-1]\otimes V[1] \big)$.
By Lemma \ref{familylem} in Appendix \ref{appA}, Proposition \ref{propSol} and \ref{prop:quadrel}, there exists a $t$-parameterised CE differential
\[\delta + d_{\geq1}^{\nabla_t} + d_{\geq1}^{\nabla'_t}+d^{\rho_t} +d^{\mu_t},\]
such that
\[d_{1}^{\nabla_t}=\nabla_t,\quad d_{1}^{\nabla'_t}=\nabla'_t,\]
\[\delta^{-1}(d_{\geq2}^{\nabla_t} + d_{\geq2}^{\nabla'_t}) = \phi_t+\psi_t,\]
\[\delta^{-1}(d^{\rho_t} +d^{\mu_t})=0.\]
Now the total differential 
\[d_t +F(t)+ \delta + d_{\geq1}^{\nabla_t} + d_{\geq1}^{\nabla'_t}+d^{\rho_t} +d^{\mu_t}\]
on 
\[\Omega_{\Delta^{n}}\otimes_{\mathbb R} \Omega_M\otimes \Gamma\big( {\rm Sym}^{m}(T_M^\vee \oplus V^\vee[-1]) \big)\]
needs to satisfy the flatness condition.

As a derivation operator, $F(t)$ is an element of degree $1$ in
\[\Omega^1_{\Delta^{n}}\otimes_{\mathbb R} \Omega_M\otimes \Gamma\big( \widehat{{\rm Sym}}(T_M^\vee \oplus V^\vee[-1]) \otimes (T_M \oplus V[1])\big).\]

By degree analysis, one may write
$F(t) = F^0(t)+F^1(t)$ with
\[ F^0(t)\in \Omega^1_{\Delta^{n}}\otimes_{\mathbb R} \Gamma \big(\widehat{{\rm Sym}} ( T_M^\vee) \otimes  V^\vee[-1]\otimes  V[1] \oplus \widehat{{\rm Sym}} ( T_M^\vee) \otimes  T_M\big),\]
\[F^1(t)\in  \Omega^1_{\Delta^{n}}\otimes_{\mathbb R} \Omega_M^1\otimes \Gamma\big( \widehat{{\rm Sym}} ( T_M^\vee) \otimes  V[1]\big).\]
However, the component $F^1(t)$ is not compatible with the natural surjective map $\Omega_{\Delta^{n}}\otimes_{\mathbb R}C^*(\mathfrak h\oplus \mathfrak v)\to \Omega_{\Delta^{n}}\otimes_{\mathbb R}C^*(\mathfrak h)$, and therefore should not be allowed in the extended differential of geometric deformations.
The consistency equation decomposes by form degrees,
\begin{equation}[d_t+ F^0(t),  d_{\geq1}^{\nabla_t} + d_{\geq1}^{\nabla'_t}] +[ F^0(t), \delta]=0,\label{solhtpy1}\end{equation}
\begin{equation}[d_t+ F^0(t), d^{\rho_t} +d^{\mu_t}]=0,\label{checkhtpy1}\end{equation}
\begin{equation}d_t F^0 (t)+\frac12[ F^0(t), F^0(t)]=0.\label{checkhtpy2}\end{equation}

Locally it helps to write 
\begin{eqnarray*}F^0(t) &=& f^i \frac{\partial}{\partial y^i}+
f^i_{j}(t)y^j \frac{\partial}{\partial y^i}+
\frac12f^i_{jk}(t)y^j y^k \frac{\partial}{\partial y^i}+\cdots\\
&&
+f^a_{b}(t) v^b\frac{\partial}{\partial v^a}+f^a_{j,b}(t) y^jv^b\frac{\partial}{\partial v^a}+\cdots\\
&\equiv& F^0_0(t)+ F^0_{1,0}(t)+F^0_{2,0}(t)+ \cdots +F^0_{0,1}(t)+ F^0_{1,1}(t)+\cdots,
\end{eqnarray*}
where the component $F^0_{0}$ must vanish because it would be incompatible with the required family structure on $\Delta^n$. As a result, $F^0_{1,0}=0$. By Equation \ref{checkhtpy1}, the component $F^0_{0,1}$
 defines a flat connection on the trivial $C^*(\mathfrak h\oplus\mathfrak v)$-bundle over $\Delta^n$. Since $\Delta^n$
 is contractible, every flat connection is gauge-equivalent to the trivial one. Hence, after applying a global bundle automorphism (a gauge transformation), we may assume that this component vanishes. So effectively, the homotopy $F^0(t)$ can be realised as an element in
\[\Omega^1_{\Delta^{n}}\otimes_{\mathbb R} \Gamma \big(\widehat{{\rm Sym}}^{\geq1}( T_M^\vee) \otimes  V^\vee[-1]\otimes  V[1] \oplus \widehat{{\rm Sym}}^{\geq2} ( T_M^\vee) \otimes  T_M\big).\]

By the recursive nature of Equation (\ref{solhtpy1}), the other terms are entirely determined by the family connections $(\nabla_t, \nabla'_t)$ and gauge conditions $\phi_t$ and $\psi_t$, following a similar analysis to Lemma~\ref{familylem} in Appendix~\ref{appA}. Since the equation involves no Lie algebroid related structures, the homotopy $F^0(t)$ is fundamentally incapable of encoding deformations of the latter.

Equations (\ref{checkhtpy1}) and (\ref{checkhtpy2}) impose the consistency conditions needed to be satisfied by $F^0(t)$, which contain infinite number of homogeneous components. However, most are satisfied automatically except for the first few, as we shall see in the following Lemma \ref{F1van}. 

To facilitate future discussions, we assign ${^\mathfrak h}$weight $k$ to the components of $F^0$ in 
\[ \Omega^1_{\Delta^{n}}\otimes_{\mathbb R} \Gamma \big(\widehat{{\rm Sym}}^k ( T_M^\vee) \otimes  V^\vee[-1]\otimes  V[1] \oplus \widehat{{\rm Sym}} ^{k+1}( T_M^\vee) \otimes  T_M\big).\]
\begin{lemma}
Suppose $F^0$ is solved recursively from Equation (\ref{solhtpy1}). Let $G(k)$ be the $^{\mathfrak h}$weight-$k$-component of Equation (\ref{checkhtpy1}) or (\ref{checkhtpy2}). For any $k\geq1$, if all $G(m)$ with $0\leq m<k$ are satisfied, then $G(k)$ is solved automatically.
\label{F1van}
\end{lemma}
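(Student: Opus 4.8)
The plan is to run the same kind of recursive/inductive argument that already appeared in the proof of Proposition~\ref{prop:quadrel} and in Lemma~\ref{compatib}, but now tracking the auxiliary form degree on $\Delta^n$ and the $^{\mathfrak h}$weight simultaneously. The key observation is that equation~(\ref{solhtpy1}) contains the term $[F^0(t),\delta]$, and $\delta$ is invertible on the component $\Omega^0_M\otimes\Gamma(\widehat{{\rm Sym}}^{\geq1}(T_M^\vee)\otimes\cdots)$ where the relevant elements live; so, exactly as in Proposition~\ref{propSol}, $F^0(t)$ is produced $^{\mathfrak h}$weight by $^{\mathfrak h}$weight by applying $\delta^{-1}$, with the $^{\mathfrak h}$weight-$k$ component of $F^0$ determined by lower-weight data together with $d_t$, $d_{\geq1}^{\nabla_t}$, $d_{\geq1}^{\nabla'_t}$, $d^{\rho_t}$ and $d^{\mu_t}$. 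I will assume this recursive construction of $F^0$ as given (it is the content of the hypothesis ``$F^0$ is solved recursively from (\ref{solhtpy1})'').

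First I would set up the bookkeeping: write $G(k)$ for the $^{\mathfrak h}$weight-$k$ component of whichever of (\ref{checkhtpy1}), (\ref{checkhtpy2}) we are treating, and note that both sides of $G(k)=0$ are elements of a space on which $\delta$ is injective once we restrict to $\widehat{{\rm Sym}}^{\geq1}(T_M^\vee)$ — this is the same mechanism used at the end of the proof of Proposition~\ref{prop:quadrel}. Hence $G(k)=0$ is equivalent to $\delta$ applied to its left-hand side vanishing, i.e.\ to a statement about $[\delta,-]$ of the relevant bracket expression. Next I would compute $[\delta, G(k)]$ using the graded Jacobi identity together with the already-established relations: the nilpotency of the undeformed and $\rho,\mu$-parts (equations~(\ref{eqnsol}), (\ref{eqnLie}), Proposition~\ref{propSol}, Proposition~\ref{prop:quadrel}), the defining equation~(\ref{solhtpy1}) for $F^0$, and the flatness of the underlying connection data. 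As in the higher-bracket part of the proof of Proposition~\ref{prop:quadrel}, the commutator $[\delta,[\,\cdot\,,\,\cdot\,]]$ rewrites as a sum of terms of the form $\pm[d^\nabla_s+d^{\nabla'}_s+(\text{parts of }d^\rho,d^\mu,F^0),\,G(m)]$ with strictly smaller $^{\mathfrak h}$weight $m<k$. By the inductive hypothesis every such $G(m)$ with $0\le m<k$ vanishes, so $[\delta,G(k)]=0$, hence $G(k)=0$ by injectivity of $\delta$ on the ambient component; this closes the induction.

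The base cases $G(0)$ — and, depending on how the weights of $\rho_1,\mu_2$ are normalised, possibly one or two of the lowest weights — must be checked by hand: $G(0)$ for (\ref{checkhtpy2}) is a flatness-in-$t$ statement for the lowest-order part of $F^0$, which holds because that part is built from the families $\nabla_t,\nabla'_t,\rho_t,[-,-]_t$ and the assumption that for each fixed $t$ we genuinely have a Lie algebroid with compatible connections (Proposition~\ref{prop:quadrel}), so its $t$-derivative lands in the expected place; $G(0)$ for (\ref{checkhtpy1}) similarly reduces to the $t$-compatibility of the lowest-weight $\rho,\mu$ data, which is exactly the family hypothesis. The main obstacle I anticipate is precisely the correct combinatorial reorganisation of $[\delta,G(k)]$ via the Jacobi identity while keeping the two gradings (form degree on $\Delta^n$ and $^{\mathfrak h}$weight) under control — in particular making sure that the $F^1(t)$-type components genuinely never re-enter (the paper has already argued they must be excluded) and that the terms produced really do have strictly lower $^{\mathfrak h}$weight so the induction is well-founded. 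Once that reorganisation is pinned down, everything else is the same $\delta$/$\delta^{-1}$ formalism already used repeatedly above, so I would present the reorganisation carefully and then invoke ``the details are similar to the previous arguments'' for the remaining bookkeeping.
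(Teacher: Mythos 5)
Your proposal follows essentially the same route as the paper, which proves Lemma~\ref{F1van} by exactly this induction on $^{\mathfrak h}$weight: apply $\delta$ to $G(k)$, use the graded Jacobi identity together with the defining equation~(\ref{solhtpy1}) (and the already-established nilpotency relations) to rewrite $[\delta,G(k)]$ in terms of lower-weight components $G(m)$, and conclude from the injectivity of $\delta$ on the relevant $\widehat{{\rm Sym}}^{\geq1}(T_M^\vee)$ components, just as in Proposition~\ref{prop:quadrel} and Lemma~\ref{familylem}. Your extra discussion of the base cases is harmless but strictly outside the lemma, since the hypothesis already assumes all $G(m)$ with $m<k$ hold (the weight-$0$ checks are done separately in the surrounding text).
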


The proof follows a strategy analogous to Proposition \ref{prop:quadrel} and Lemma \ref{familylem}. We begin by noting that the operator $\delta$ acts invertibly on the relevant equations, meaning it suffices to verify that the commutator $[\delta, -]$ annihilates the left-hand side of the equation $G(m)$. The key step is to express this commutator in terms of other commutators involving  $d^{\nabla_t}_{\geq1} + d^{\nabla'_t}_{\geq1}$, an operator whose action increases the $\mathfrak{h}$-weight. This manipulation yields a new expression that contains $G(m)$ terms with a lower $\mathfrak{h}$-weight, allowing us to conclude by applying the inductive hypothesis.

The $^{\mathfrak h}$weight-$0$-component of Equation (\ref{checkhtpy1}) gives
\[[d_t, d_1^{\rho_t}]=0\]
and
\[[d_t, d_2^{\mu_t}]+d_1^{\rho_t}(F^0_{1,1})=0.\]
This is equivalent to the requirement that the $t$-dependence of the Lie algebroid structure on $V$ is trivial, by a straightforward calculation.
The $^{\mathfrak h}$weight-$0$-component of Equation (\ref{checkhtpy2}) is trivial, which immediately guarantees that the equation holds by Lemma \ref{F1van}. Indeed, one checks that the lowest $^{\mathfrak h}$weight component of Equation (\ref{checkhtpy2}) gives
\[d_t F_{1,1}^0 (t)=0\]
and
\[d_t F_{2,0}^0 (t)=0,\]
which can be verified explicitly. The above analysis is concluded in the following proposition.

\begin{proposition}
Given a family of connections $(\nabla_t, \nabla'_t)$ of $T_M\oplus V$ parameterised smoothly by $t\in \Delta^{n}$, and fix a Lie algebroid structure on $V$, there exists a flat $\Delta^n$-family of geometric deformations of $L_\infty$ algebra structures on $\Omega_M\otimes \Gamma\big( T_M[-1]\oplus V\big)$. Conversely, any flat $\Delta^n$-family of geometrically deformed $L_\infty$ algebra structures on $\Omega_M\otimes \Gamma\big( T_M[-1]\oplus V\big)$ given by a flat connection $d_t +F(t)$ with $F(t)$ in \[\Omega^1_{\Delta^{n}}\otimes_{\mathbb R} \Gamma \big(\widehat{{\rm Sym}} ^{\geq1}( T_M^\vee) \otimes  V^\vee[-1]\otimes  V[1] \oplus \widehat{{\rm Sym}}^{\geq2} ( T_M^\vee) \otimes  T_M\big)\] necessarily comes from a single Lie algebroid structure on $V$. 
\label{gerhtpy}
\end{proposition}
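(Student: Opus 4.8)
The plan is to prove Proposition~\ref{gerhtpy} in two directions, treating the "existence" statement and the "converse" statement separately, with the bulk of the work already delegated to the preceding recursive machinery (Proposition~\ref{propSol}, Proposition~\ref{prop:quadrel}, Lemma~\ref{familylem}, Lemma~\ref{F1van}).

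\medskip
\noindent\textbf{Existence direction.} Given the smooth family $(\nabla_t,\nabla'_t)$ over $\Delta^n$ and a fixed Lie algebroid structure $(\rho,[-,-])$ on $V$, I would first invoke Lemma~\ref{familylem} together with Proposition~\ref{propSol} and Proposition~\ref{prop:quadrel} to produce, for each $t$, the $t$-parameterised CE differential $\delta + d_{\geq1}^{\nabla_t}+d_{\geq1}^{\nabla'_t}+d^{\rho_t}+d^{\mu_t}$, choosing for definiteness the $\delta^{-1}$-gauge $\delta^{-1}(d_{\geq2}^{\nabla_t}+d_{\geq2}^{\nabla'_t})=0$ and $\delta^{-1}(d^{\rho_t}+d^{\mu_t})=0$ (any smooth choice of gauge $(\phi_t,\psi_t)$ would work equally well, since Proposition~\ref{familylem}/\ref{propSol} guarantee smoothness of the solution in $t$). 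Then I would solve equation~(\ref{solhtpy1}) recursively for $F^0(t)$ in ascending ${}^\mathfrak{h}$weight, setting the ambiguous lowest components $F^0_0$ and $F^0_{0,1}$ to zero; this forces $F^0$ to live in $\Omega^1_{\Delta^n}\otimes_{\mathbb R}\Gamma\big(\widehat{\rm Sym}^{\geq1}(T_M^\vee)\otimes V^\vee[-1]\otimes V[1]\oplus\widehat{\rm Sym}^{\geq2}(T_M^\vee)\otimes T_M\big)$ as noted just before the proposition. It remains to check that the resulting $d_t+F^0(t)+\delta+\cdots$ is genuinely flat, i.e. that equations~(\ref{checkhtpy1}) and~(\ref{checkhtpy2}) hold. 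Here I would argue by induction on ${}^\mathfrak{h}$weight using Lemma~\ref{F1van}: the base cases are exactly the ${}^\mathfrak{h}$weight-$0$ components, and one verifies directly that $[d_t,d_1^{\rho_t}]=0$, $[d_t,d_2^{\mu_t}]+d_1^{\rho_t}(F^0_{1,1})=0$, and $d_tF^0_{1,1}(t)=0=d_tF^0_2(t)$ because the Lie algebroid structure is $t$-independent (so $d^{\rho_t}=d^\rho$, $d^{\mu_2,t}=d^{\mu_2}$ are constant in $t$, and $F^0_{1,1}$, $F^0_2$ are built from $t$-constant data via $\delta^{-1}$ acting on the single-parameter-family connections' curvatures). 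Then Lemma~\ref{F1van} propagates the vanishing to all higher weights, giving flatness.

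\medskip
\noindent\textbf{Converse direction.} Suppose we are handed a flat connection $d_t+F(t)$ with $F(t)$ already in the stated space (so, in particular, the $F^1(t)$ component vanishes and $F(t)=F^0(t)$ descends compatibly with $\Omega_{\Delta^n}\otimes C^*(\mathfrak h\oplus\mathfrak v)\to\Omega_{\Delta^n}\otimes C^*(\mathfrak h)$), defining a flat family of geometrically deformed $L_\infty$ structures. For each fixed $t$ the slice gives, by Theorem~\ref{thm1}, a Lie algebroid structure $(\rho^{(t)},[-,-]^{(t)})$ on $V$; I must show this is independent of $t$. The flatness equation restricted to form-degree bidegrees is precisely~(\ref{solhtpy1}),~(\ref{checkhtpy1}),~(\ref{checkhtpy2}). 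The ${}^\mathfrak{h}$weight-$0$ part of~(\ref{checkhtpy1}) reads $[d_t,d_1^{\rho_t}]=0$ and $[d_t,d_2^{\mu_t}]+d_1^{\rho_t}(F^0_{1,1})=0$; since $F^0$ has no ${}^\mathfrak{h}$weight-$0$ component (it lies in $\widehat{\rm Sym}^{\geq1}$ resp.\ $\widehat{\rm Sym}^{\geq2}$, by hypothesis), the second equation's inhomogeneous term involves only $F^0_{1,1}$ which is itself constrained — I would unwind this to conclude $\partial_t\rho^{(t)}=0$ and $\partial_t[-,-]^{(t)}=0$, i.e. the anchor and bracket are $t$-constant, exactly as asserted in the sentence preceding the proposition ("the $t$-dependence of the Lie algebroid structure on $V$ is trivial, by a straight-forward calculation"). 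I would carry out that straightforward calculation: expand $d_1^{\rho_t}$ and $d_2^{\mu_t}$ in the local frame, note $[d_t,-]$ differentiates the structure coefficients in $t$, and match against the dual formulas for $C_{bc}^a$ and $\rho^i_a$ appearing in Proposition~\ref{prop:quadrel}.

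\medskip
\noindent\textbf{Anticipated main obstacle.} The genuinely delicate point is the converse: ensuring that the hypothesis "$F(t)$ lies in $\widehat{\rm Sym}^{\geq1}\oplus\widehat{\rm Sym}^{\geq2}$" is strong enough to kill all $t$-dependence of the underlying algebroid, rather than merely a gauge-like $t$-dependence. One must be careful that a flat family could a priori move within the (large) space of homotopic-but-distinct CE differentials while the algebroid stays put, or conversely that the $\widehat{\rm Sym}^{\geq1}$-restriction really does pin the leading ${}^\mathfrak{h}$weight components and thereby the anchor/bracket. I expect this to reduce to checking that the only solutions of the ${}^\mathfrak{h}$weight-$0$ part of~(\ref{checkhtpy1}) with $F^0$ of positive ${}^\mathfrak{h}$weight force $\partial_t(\text{anchor})=\partial_t(\text{bracket})=0$, which in turn hinges on the injectivity of the relevant map in weight $0$ — essentially the same non-degeneracy of $\delta$ on $\widehat{\rm Sym}^{\geq1}(T_M^\vee)\otimes(\cdots)$ used throughout Section~\ref{algoid}. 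The remaining consistency checks (that~(\ref{checkhtpy2}) and the higher-weight parts of~(\ref{checkhtpy1}) are then automatic) follow the now-familiar $\delta$-invertibility-plus-Jacobi induction of Proposition~\ref{prop:quadrel} and Lemma~\ref{F1van}, so I would state them and cite those results rather than re-derive them.
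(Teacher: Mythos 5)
Your overall architecture is the same as the paper's: invoke Lemma~\ref{familylem}, Proposition~\ref{propSol} and Proposition~\ref{prop:quadrel} to get the $t$-parameterised CE differential, solve equation~(\ref{solhtpy1}) recursively for $F^0(t)$ with the low components $F^0_0$, $F^0_{0,1}$ set to zero, check the lowest-$^{\mathfrak h}$weight parts of (\ref{checkhtpy1})--(\ref{checkhtpy2}) by hand, and let Lemma~\ref{F1van} propagate the vanishing to all higher weights; the converse is read off from the weight-$0$ part of (\ref{checkhtpy1}) exactly as in the paragraph preceding the proposition. So there is no difference in route. However, your justification of the weight-$0$ check in the existence direction contains a genuine error: you assert that fixing the Lie algebroid makes $d^{\mu_t}_2$ constant in $t$. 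It does not. The initial datum $\mu_2$ is the torsion of the bracket relative to $\nabla'_t$, i.e.\ locally $\mu^a_{bc}=(\Gamma'_t)_i{}^a{}_c\,\rho^i{}_b-(\Gamma'_t)_i{}^a{}_b\,\rho^i{}_c-C^a_{bc}$, so it varies with $t$ through the connection even when the anchor and bracket are fixed. If $[d_t,d^{\mu_t}_2]$ really vanished, the equation $[d_t,d_2^{\mu_t}]+d_1^{\rho_t}(F^0_{1,1})=0$ would force $d_1^{\rho_t}(F^0_{1,1})=0$, which is false for a genuinely varying $\nabla'_t$ with nonzero anchor (recall $F^0_{1,1}$ is built from $\delta^{-1}(\partial_t\nabla'_t)$, cf.\ $F_2(t)$ in Lemma~\ref{familylem}). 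The correct verification is precisely that the $t$-variation of the torsion $\mu_t$ induced by $\partial_t\Gamma'_t$ is cancelled by $d_1^{\rho_t}(F^0_{1,1})$ when and only when the bracket itself is $t$-constant; this is the content of the phrase ``the $t$-dependence of the Lie algebroid structure on $V$ is trivial''.

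A smaller but related slip: $F^0_{1,1}$ and $F^0_2$ are not ``built from $t$-constant data''; they depend on $\partial_t\nabla'_t$ and $\partial_t\nabla_t$. The identities $d_tF^0_{1,1}(t)=0$ and $d_tF^0_2(t)=0$ hold because $d_t$ of such a term produces the symmetric second $t$-derivative of the connection contracted against the antisymmetric $dt^j\wedge dt^i$, not because the input is independent of $t$. With these two corrections the argument closes and coincides with the paper's; your treatment of the converse (using the hypothesis that $F(t)$ has no weight-$0$ component to pin down the weight-$0$ equations, then unwinding them in a local frame against the formulas for $\rho^i{}_a$ and $C^a_{bc}$ from Proposition~\ref{prop:quadrel}) is the intended ``straight-forward calculation''.
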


The fact that one can set $F^0_{1,0}$ and $F^0_{0,1}$ to zero is not surprising --- in a similar case \cite{Kapranov}, the flat connection which encodes the homotopy equivalences was given by pulling back $d_t$ via an exponential map, and there again the constant and linear terms are absent. 

\section{Atiyah class and character}
\label{atiyah}

Atiyah class plays a crucial role in cotangent theories, particularly in encoding the exterior derivatives of the trilinear and higher terms in the action functional \cite{Co11, GG_ahat}. Furthermore, it can be employed to construct character classes in various contexts. Here we will firstly investigate the relationship between Atiyah class of a geometrically deformed $L_\infty$ algebra and its underlying Lie algebroid structure. By utilizing this connection, we then generate the Atiyah-Chern classes, which are relevant in the one-loop computation of certain intriguing field theories. 

Consider an $L_\infty$ algebra $\mathfrak g$, there exists a commutative dg-algebra module $C^*(\mathfrak g)\otimes_{\Omega_M} \mathfrak g[1]$ which plays the role of tangent bundle of the space $\mathfrak g[1]\equiv B\mathfrak g$. There is a natural flat connection $\nabla_{B\mathfrak g}$ on $T_{B\mathfrak g}$ given by a map \footnote{Here the construction is slightly different from \cite{GG_ahat}. We are realising the notion of connection in a purely algebraic setting, where the splitting is naturally given. In the reference the authors considered the connection over an infinite jet bundle, whose local definition depends on a choice of local trivialization of $T_M$.}
\[ C^*(\mathfrak g)\otimes \mathfrak g[1]\to \Omega^1_{B\mathfrak g}\otimes \mathfrak g[1]\] satisfying that $\nabla_{B\mathfrak g} (f\otimes s) = d^{DR} f \otimes s$ for all $f\in C^*(\mathfrak g)$ and $s\in \mathfrak g[1]$, where $\Omega^1_{B\mathfrak g}$ is the module of K\"ahler differentials of $C^*(\mathfrak g)$. The Atiyah class of $\nabla_{B\mathfrak g}$ is defined as the obstruction to the compatibility between $\nabla_{B\mathfrak g}$ and the CE differential $d^{\mathfrak g}$, i.e., 
\[ At(\nabla_{B\mathfrak g}):= \nabla_{B\mathfrak g}\circ d^{\mathfrak g}+ d^{\mathfrak g} \circ \nabla_{B\mathfrak g},\]
where we have extended $\nabla_{B\mathfrak g}$ to the de Rham complex $\Omega^*_{B\mathfrak g}\otimes \mathfrak g[1]$. The differential on $\Omega^1_{B\mathfrak g}\otimes \mathfrak g[1]\cong C^*(\mathfrak g)\otimes \mathfrak g^\vee[-1]\otimes \mathfrak g[1]$ is given by the CE differential of the $L_\infty$ module $\mathfrak g^\vee[-1]\otimes \mathfrak g[1]$, and we again denote it as $d^{\mathfrak g}$.

\begin{lemma}
Viewed as a linear map $C^*(\mathfrak g)\otimes \mathfrak g[1]\to \Omega^1_{B\mathfrak g}\otimes \mathfrak g[1]$, the class $At(\nabla_{B\mathfrak g})$ is $d^{\mathfrak g}$-closed and $\nabla_{B\mathfrak g}$-flat.
\end{lemma}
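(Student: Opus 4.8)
The plan is to prove the two assertions separately, both by short formal manipulations that only use the nilpotency of the CE differential $d^{\mathfrak g}$ and the fact that $\nabla_{B\mathfrak g}$ is a genuine flat connection on $T_{B\mathfrak g}$, extended as a derivation to the full de Rham complex $\Omega^*_{B\mathfrak g}\otimes\mathfrak g[1]$.

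First I would record the two structural facts I am allowed to use: $(d^{\mathfrak g})^2=0$ on $C^*(\mathfrak g)$ and, after extension to $\mathfrak g^\vee[-1]\otimes\mathfrak g[1]$, still $(d^{\mathfrak g})^2=0$; and $\nabla_{B\mathfrak g}^2=0$ because the connection $f\otimes s\mapsto d^{DR}f\otimes s$ is flat (its curvature vanishes since $d^{DR}$ squares to zero on the Kähler differentials). Both $d^{\mathfrak g}$ and $\nabla_{B\mathfrak g}$ are odd derivations of the appropriate bigraded algebra, so I will use graded commutators throughout: write $At(\nabla_{B\mathfrak g}) = [\nabla_{B\mathfrak g}, d^{\mathfrak g}]$ (the anticommutator, matching the displayed formula $\nabla_{B\mathfrak g}\circ d^{\mathfrak g}+d^{\mathfrak g}\circ\nabla_{B\mathfrak g}$, both being odd).

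For $d^{\mathfrak g}$-closedness: since $At(\nabla_{B\mathfrak g})=[\nabla_{B\mathfrak g},d^{\mathfrak g}]$, the graded Jacobi identity gives
\[
[d^{\mathfrak g},[\nabla_{B\mathfrak g},d^{\mathfrak g}]] = [[d^{\mathfrak g},\nabla_{B\mathfrak g}],d^{\mathfrak g}] + [\nabla_{B\mathfrak g},[d^{\mathfrak g},d^{\mathfrak g}]].
\]
The last term vanishes because $[d^{\mathfrak g},d^{\mathfrak g}]=2(d^{\mathfrak g})^2=0$. For the first term on the right, note $[d^{\mathfrak g},\nabla_{B\mathfrak g}] = \pm[\nabla_{B\mathfrak g},d^{\mathfrak g}] = \pm At(\nabla_{B\mathfrak g})$, so $[[d^{\mathfrak g},\nabla_{B\mathfrak g}],d^{\mathfrak g}] = \pm[At(\nabla_{B\mathfrak g}),d^{\mathfrak g}]$, and moving everything to one side yields $[d^{\mathfrak g},At(\nabla_{B\mathfrak g})]=0$, which is exactly $d^{\mathfrak g}$-closedness of the class when viewed as an element of $\Omega^1_{B\mathfrak g}\otimes\mathfrak g[1]\cong C^*(\mathfrak g)\otimes\mathfrak g^\vee[-1]\otimes\mathfrak g[1]$. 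For $\nabla_{B\mathfrak g}$-flatness I run the identical Jacobi argument with the roles of $d^{\mathfrak g}$ and $\nabla_{B\mathfrak g}$ interchanged, using $\nabla_{B\mathfrak g}^2=0$ in place of $(d^{\mathfrak g})^2=0$, to get $[\nabla_{B\mathfrak g},At(\nabla_{B\mathfrak g})]=0$.

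The one point that genuinely needs care — and which I expect to be the main obstacle — is justifying that the graded Jacobi identity applies, i.e. that $\nabla_{B\mathfrak g}$, $d^{\mathfrak g}$ and their commutator all live in a common Lie superalgebra of derivations acting on $\Omega^*_{B\mathfrak g}\otimes\mathfrak g[1]$, with consistent signs. Concretely I must (i) check that $\nabla_{B\mathfrak g}$ extended to the full de Rham complex really is a derivation compatible with the wedge/symmetric structure and that its extension still squares to zero, (ii) check that the CE differential $d^{\mathfrak g}$ on the module $C^*(\mathfrak g)\otimes\mathfrak g^\vee[-1]\otimes\mathfrak g[1]$ is the derivation extension of the one on $C^*(\mathfrak g)$ so that it also anticommutes appropriately, and (iii) confirm the parities (both operators odd) so that all commutators are symmetric rather than antisymmetric. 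Once this bookkeeping is in place the two identities are immediate, and I would present the argument in that order: set up the superalgebra of derivations, note the two squaring-to-zero facts, then invoke Jacobi twice.
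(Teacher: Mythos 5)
Your proposal is correct and follows essentially the same route as the paper: both statements are obtained from the graded Jacobi identity for the odd derivations $d^{\mathfrak g}$ and $\nabla_{B\mathfrak g}$, using $(d^{\mathfrak g})^2=0$ for closedness and the flatness $\nabla_{B\mathfrak g}^2=0$ (checked on the extended modules, e.g.\ by local computation) for flatness. The sign and derivation-extension bookkeeping you flag is exactly the content the paper leaves implicit, so there is no substantive difference.
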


\begin{proof}
The differential $d^{\mathfrak g}$ extends naturally from $C^*(\mathfrak g)\otimes M$ to $\Omega^1_{B\mathfrak g}\otimes M$ for any $L_\infty$ module $M$. For linear maps, we have that 
\[ d^{\mathfrak g}\big(At(\nabla_{B\mathfrak g})\big) = d^{\mathfrak g} \circ At(\nabla_{B\mathfrak g}) - At(\nabla_{B\mathfrak g})\circ d^{\mathfrak g}=[d^{\mathfrak g},[d^{\mathfrak g} ,\nabla_{B\mathfrak g}]],\]
which vanishes by Jacobi identity.

The connection $\nabla_{B\mathfrak g}$ extends from $C^*(\mathfrak g)\otimes \mathfrak g[1]$ to $C^*(\mathfrak g)\otimes {\rm End}(\mathfrak g[1])$, and can be shown flat by local computation. So 
\[ \nabla_{B\mathfrak g}\big(At(\nabla_{B\mathfrak g})\big) = \nabla_{B\mathfrak g}\circ At(\nabla_{B\mathfrak g}) - At(\nabla_{B\mathfrak g})\circ \nabla_{B\mathfrak g}=0.\]
\end{proof}

From the analysis of linear structure, \[At(\nabla_{B\mathfrak g})\in C^*(\mathfrak g)\otimes \mathfrak g^\vee[-1] \otimes \mathfrak g^\vee[-1]\otimes \mathfrak g[1]\cong C^*(\mathfrak g)\otimes \mathfrak g^\vee[-1] \otimes {\rm End}\,\mathfrak g[1].\] 

\subsection{Atiyah class of Lie algebroids}

Consider a geometrically deformed $L_\infty$ algebra with underlying space being $\mathfrak g=\mathfrak h\oplus \mathfrak v$ as before.
A local computation on basis $\{ 1\otimes y_i, 1\otimes v_a|i=1,\cdots, {\rm dim}(M), \, a = 1,\cdots, {\rm rk}(V) \}$ of $ C^*(\mathfrak g)\otimes\mathfrak g[1]$ produces the homogeneous components of $At(\nabla_{B\mathfrak g})$ as a $C^*(\mathfrak g)$-linear maps which take values in $ C^*(\mathfrak g)\otimes \mathfrak g^\vee[-1] \otimes \mathfrak g[1]$. 

Notice that 
\begin{eqnarray*}
d^{\mathfrak g}(1\otimes y_i) &=& 1\otimes \nabla y_i +\sum_{n\geq1} \frac1{n!}y^{i_1}\cdots y^{i_{n}} \otimes  (l_{n+1})_{i_1 i_2\cdots i_{n} i}\,^j y_j \\
&&+\sum_{n\geq1} \frac1{(n-1)!}y^{i_1}\cdots y^{i_{n-1}} v^b \otimes (\rho_{n+1})_{i_1 i_2\cdots i_{n-1} i b}\,^j y_j\\
&&+\sum_{n\geq2}\frac1{2!} \frac1{(n-2)!}y^{i_1}\cdots y^{i_{n-2}} v^a v^b \otimes (\mu_{n+1})_{i_1 i_2\cdots i_{n-2} i ab}\,^c v_c\\
&&+ \sum_{n\geq0} \frac1{n!}y^{i_1}\cdots y^{i_{n}} v^a\otimes  (l'_{n+2})_{i_1 i_2\cdots i_{n} i a}\,^b v_b,
\end{eqnarray*}
so
\begin{eqnarray*} \nabla_{B\mathfrak g}\circ d^{\mathfrak g}(1\otimes y_i) &=& \sum_{n\geq1} \frac1{(n-1)!}y^{i_1}\cdots Dy^{i_{n}} \otimes (l_{n+1})_{i_1 i_2\cdots i_{n} i}\,^j y_j \\
&&+\sum_{n\geq2} \frac1{(n-2)!}y^{i_1}\cdots Dy^{i_{n-1}} v^b \otimes (\rho_{n+1})_{i_1 i_2\cdots i_{n-1} i b}\,^j y_j\\
&&+\sum_{n\geq1} \frac1{(n-1)!}y^{i_1}\cdots y^{i_{n-1}} D v^b \otimes (\rho_{n+1})_{i_1 i_2\cdots i_{n-1} i b}\,^j y_j\\
&&+\sum_{n\geq3}\frac1{2!} \frac1{(n-3)!}y^{i_1}\cdots Dy^{i_{n-2}} v^a v^b \otimes (\mu_{n+1})_{i_1 i_2\cdots i_{n-2} i ab}\,^c v_c\\
&&+\sum_{n\geq2}\frac1{(n-2)!}y^{i_1}\cdots y^{i_{n-2}} Dv^a v^b \otimes (\mu_{n+1})_{i_1 i_2\cdots i_{n-2} i ab}\,^c v_c\\
&&+ \sum_{n\geq1} \frac1{(n-1)!}y^{i_1}\cdots Dy^{i_{n}} v^a\otimes  (l'_{n+2})_{i_1 i_2\cdots i_{n} i a}\,^b v_b\\
&&+\sum_{n\geq0} \frac1{n!}y^{i_1}\cdots y^{i_{n}} Dv^a\otimes  (l'_{n+2})_{i_1 i_2\cdots i_{n} i a}\,^b v_b.
\end{eqnarray*}
This gives the component
\begin{eqnarray*} At(\nabla_{B\mathfrak g})(1\otimes y_i) &=& Dy^{j} \otimes l_{2}(y_{j} ,y_i) +Dy^{j}v^b \otimes \rho_3(y_j, y_i, v_b)+Dv^b \otimes \rho_2(y_i, v_b)\\
&&+\frac1{2!} Dy^j v^a v^b \otimes \mu_4(y_j,y_ i, v_ a, v_b)+  Dv^a v^b \otimes \mu_3( y_i, v_a, v_b)\\
&&+ Dy^j v^a\otimes  l'_{3}(y_j,y_ i, v_a)+ Dv^a\otimes l'_{2}(y_i,v_ a)+ \cdots,
\end{eqnarray*}
where ``$\cdots$" stands for terms with dependence on $\{y^i\}_{i=1}^{{\rm dim}(M)}$.

Similarly, by
\begin{eqnarray*}d^{\mathfrak g}(1\otimes v_a) &=& 1\otimes \nabla' v_a +\sum_{n\geq1} \frac1{n!}y^{i_1}\cdots y^{i_{n}} \otimes  (l'_{n+1})_{i_1 i_2\cdots i_{n} a}\,^b v_b \\
&&+\sum_{n\geq0} \frac1{n!}y^{i_1}\cdots y^{i_{n}} v^b \otimes (\mu_{n+2})_{i_1 i_2\cdots i_{n} b a}\,^c v_c \\
&&+1\otimes  \rho(v_a)+\sum_{n\geq1} \frac1{n!}y^{i_1}\cdots y^{i_{n}} \otimes (\rho_{n+1})_{i_1 i_2\cdots i_{n} a}\,^j y_j
\end{eqnarray*}
and
\begin{eqnarray*}
\nabla_{B\mathfrak g}\circ d^{\mathfrak g}(1\otimes v_a) &=& \sum_{n\geq1} \frac1{(n-1)!} y^{i_1}\cdots Dy^{i_{n}} \otimes  (l'_{n+1})_{i_1 i_2\cdots i_{n} a}\,^b v_b\\
&& +\sum_{n\geq0} \frac1{(n-1)!} y^{i_1}\cdots Dy^{i_{n}} v^b \otimes (\mu_{n+2})_{i_1 i_2\cdots i_{n} b a}\,^c v_c \\
&&+\sum_{n\geq0} \frac1{n!} y^{i_1}\cdots y^{i_{n}} Dv^b \otimes (\mu_{n+2})_{i_1 i_2\cdots i_{n} b a}\,^c v_c\\
&&+\sum_{n\geq1}\frac1{(n-1)!} y^{i_1}\cdots Dy^{i_{n}} \otimes (\rho_{n+1})_{i_1 i_2\cdots i_{n} a}\,^j y_j.
\end{eqnarray*}
So
\begin{eqnarray*}
At(\nabla_{B\mathfrak g})(1\otimes v_a) &=& Dy^j \otimes  l'_{2}(y_j,v_ a) + Dy^{j} v^b \otimes \mu_3(y_j,v_b,v_a) \\
&&+  Dv^b \otimes \mu_2(v_b,v_a)+Dy^{j}\otimes \rho_2(y_j,v_a)+\cdots.
\end{eqnarray*}

\begin{proposition}
The constant part of the Atiyah class defined on the geometrically deformed $L_\infty$ algebra $\mathfrak g$ is given by
\begin{eqnarray*}At(\nabla_{B\mathfrak g})_0 &=& Dy^i\otimes \left(\begin{array}{cc}l_2(y_i,-) & \rho_{1+1}(y_i,-) \\0 & l'_2(y_i,-)\end{array}\right)\\
&&+ Dv^a\otimes \left(\begin{array}{cc}\rho_{1+1}(-,v_a) & 0 \\l'_2(-,v_a) & \mu_2(v_a,-)\end{array}\right)
\end{eqnarray*}
as a linear map in ${\rm Hom}(\mathfrak g[1],\mathfrak g^\vee[-1]\otimes \mathfrak g[1])$.
Particularly in $\delta^{-1}$ gauge, this is 
\begin{eqnarray*}
&&Dy^i\otimes \left(\begin{array}{cc}\frac13 \iota_{y_i} R (-)+\frac13\iota_{(-)} R (y_i) &  \nabla_{\rho(-)}(y_i)-\check{\nabla}^{bas}_{(-)}(y_i) \\0 & \frac12\iota_{y_i} R'(-)\end{array}\right)\\
&&+ Dv^a\otimes \left(\begin{array}{cc} \nabla_{\rho(v_a)}-\check{\nabla}^{bas}_{v_a}  & 0 \\ \frac12\iota_{(-)} R'(v_a) &  \nabla'_{\rho(v_a)}- \hat{\nabla}^{bas}_{v_a}\end{array}\right),
\end{eqnarray*}
where $\check{\nabla}^{bas}$ and $\hat{\nabla}^{bas}$ are basic connections of Lie algebroid $V$, to be reviewed at the beginning of Section \ref{ss:modular}.
\label{atiyah0part}
\end{proposition}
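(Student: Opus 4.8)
The plan is to extract the constant (degree-zero in $\{y^i\}$) part directly from the two displayed formulas for $At(\nabla_{B\mathfrak g})(1\otimes y_i)$ and $At(\nabla_{B\mathfrak g})(1\otimes v_a)$ that were just computed, and then to rewrite each matrix entry in geometric terms under the $\delta^{-1}$ gauge. First I would collect terms: from $At(\nabla_{B\mathfrak g})(1\otimes y_i)$ the constant part (terms with no $y^j$ factor surviving in the $C^*(\mathfrak g)$-coefficient, i.e. only $Dy^j$ or $Dv^b$ and $v^a$'s) consists of $Dy^{j}\otimes l_2(y_j,y_i)$, $Dv^b\otimes \rho_2(y_i,v_b)$, $Dv^b\otimes l'_2(y_i,v_b)$, together with the $\mu_3$ term $Dv^a v^b\otimes \mu_3(y_i,v_a,v_b)$ — but one must be careful about what ``constant'' means here, since the target ${\rm End}\,\mathfrak g[1]$ already absorbs one $v^b$; I will treat ``constant part'' as the $\widehat{{\rm Sym}}^0(\mathfrak g^\vee[-1])$-component of the coefficient in $C^*(\mathfrak g)\otimes\mathfrak g^\vee[-1]\otimes{\rm End}\,\mathfrak g[1]$, so the $\mu_3$ and $\rho_3$ and $l'_3$ terms, which carry an extra $v$ or $y$ beyond the ${\rm End}$ slot, are dropped. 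This yields exactly the block-matrix form stated: the $Dy^i$ term has $(1,1)$-entry $l_2(y_i,-)$ acting on $\mathfrak h[1]$, $(1,2)$-entry $\rho_{1+1}(y_i,-)$ sending $\mathfrak v[1]\to\mathfrak h[1]$, $(2,1)$-entry zero, $(2,2)$-entry $l'_2(y_i,-)$; and the $Dv^a$ term, read off from $At(\nabla_{B\mathfrak g})(1\otimes v_a)$ plus the $\rho_2$-in-first-slot contribution, gives $(1,1)$-entry $\rho_{1+1}(-,v_a)$, $(2,1)$-entry $l'_2(-,v_a)$, $(2,2)$-entry $\mu_2(v_a,-)$, $(1,2)$-entry zero.

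Next I would substitute the explicit $\delta^{-1}$-gauge expressions. For $l_2$ I would invoke the known formula from Appendix \ref{appA} / \cite{GG_ahat}: in the $\delta^{-1}$ gauge $l_2(y_i,y_j)$ is the (symmetrized) curvature term so that $l_2(y_i,-)$ as an endomorphism equals $\tfrac13\iota_{y_i}R(-)+\tfrac13\iota_{(-)}R(y_i)$, and similarly $l'_2(y_i,-)=\tfrac12\iota_{y_i}R'(-)$ where $R,R'$ are the curvatures of $\nabla,\nabla'$; these are standard Fedosov-type computations already recorded earlier. For $\rho_{1+1}$ I would use Proposition (the one stating $\rho_{1+1}(X,\alpha)=\nabla_X\rho(\alpha)-\rho(\nabla'_X\alpha)$) and recognize the right-hand side as $\nabla_{\rho(-)}(X)$ minus the basic connection $\check\nabla^{bas}$ — i.e. rewrite $\nabla_X\rho(\alpha)-\rho(\nabla'_X\alpha)$, using that the basic connection on $T_M$ along $\alpha\in\Gamma(V)$ is $\check\nabla^{bas}_\alpha X = \rho(\nabla'_X\alpha)+[\rho(\alpha),X]$ (or the relevant sign convention fixed earlier), so that modulo the identification the $(1,2)$ and $(1,1)$ blocks become $\nabla_{\rho(-)}(y_i)-\check\nabla^{bas}_{(-)}(y_i)$ and $\nabla_{\rho(v_a)}-\check\nabla^{bas}_{v_a}$ respectively. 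For the $(2,2)$ block I would use $\mu_2=\mu$, the torsion of the Lie algebroid, and the characterization of the basic connection $\hat\nabla^{bas}$ on $V$ to write $\mu_2(v_a,-)=\nabla'_{\rho(v_a)}-\hat\nabla^{bas}_{v_a}$; the $(2,1)$ block $l'_2(-,v_a)$ is again the curvature $\tfrac12\iota_{(-)}R'(v_a)$.

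The main obstacle I expect is bookkeeping of signs, symmetrizations, and the precise normalization of $l_2$, $l'_2$ in the $\delta^{-1}$ gauge (the factors $\tfrac13$, $\tfrac13$, $\tfrac12$), together with matching the several sign conventions for the basic connections $\check\nabla^{bas}$ and $\hat\nabla^{bas}$ against those implicit in $\rho_{1+1}$ and $\mu_2$; none of this is conceptually hard but it is where errors creep in. A secondary subtlety is justifying precisely which terms count as the ``constant part'': I would state clearly that it is the part of $At(\nabla_{B\mathfrak g})$ lying in $C^\infty_M\otimes\mathfrak g^\vee[-1]\otimes\mathfrak g^\vee[-1]\otimes\mathfrak g[1]$, i.e. with trivial dependence on the generators of $C^*(\mathfrak g)$ beyond the two explicit $\mathfrak g^\vee[-1]$ slots, so that the $\rho_3,\mu_3,\mu_4,l'_3$ terms — which all carry at least one extra generator — are genuinely discarded, and then simply read off the four $2\times 2$ blocks from the two boxed formulas above. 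Once the identifications of the brackets with curvature and basic-connection data are in place, assembling the final block matrix is immediate.
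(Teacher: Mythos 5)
Your proposal is correct and follows essentially the same route as the paper: the proposition is read off directly from the displayed local computations of $At(\nabla_{B\mathfrak g})(1\otimes y_i)$ and $At(\nabla_{B\mathfrak g})(1\otimes v_a)$ by keeping only the terms with trivial $C^*(\mathfrak g)$-coefficient, and the $\delta^{-1}$-gauge form then follows by substituting the Appendix~\ref{appA} formulas for $l_2$, $l'_2$ (with $\phi_3=\psi_3=0$), the identity $\rho_{1+1}(X,\alpha)=\nabla_X\rho(\alpha)-\rho(\nabla'_X\alpha)=\nabla_{\rho(\alpha)}X-\check{\nabla}^{bas}_\alpha X$, and $\mu_2(v_a,-)=\nabla'_{\rho(v_a)}-\hat{\nabla}^{bas}_{v_a}$, exactly as you outline (your expression for $\check{\nabla}^{bas}$ agrees with the paper's since $\nabla$ is torsion-free).
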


\subsection{The Weil algebra}
\label{weilalgsubsec}
We shall remain in $\delta^{-1}$ gauge till the end of this section. Consider $\mathfrak g$ as before. 
On $\mathcal O_{B\mathfrak g}\equiv C^*(\mathfrak g)$, the CE differential decomposes into $(\delta+d^{\nabla}_{\geq1}+d^{\nabla'})+(d^{\rho}+d^{\mu})$. Likewise, on $\Omega_{B\mathfrak g}$, the differential decomposes as $(\delta+d^{\nabla}_{\geq1}+d^{\nabla'})+(d^{\rho}+d^{\mu}+d^{DR}),$where the meaning of each component should be clear.

\begin{lemma} 
On $\Omega^{\star\geq1}_{B\mathfrak g}$, we have that
\[d_{\geq1}^{\nabla} = \nabla+
\sum_{n>0} \frac1{n!}\big( (l_{n+1} )^i\,_{i_1,i_2,\cdots, i_{n+1}}\, y^{i_1} \cdots Dy^{i_{n+1}}  \big)  \frac{\partial}{\partial Dy^i},\]
\begin{eqnarray*}
d^{\nabla'} &=& \nabla'+
\sum_{n>0} \big( \frac1{(n-1)!}  (l'_{n+1} )^b\,_{i_1,i_2,\cdots, i_{n+1}, a}\, y^{i_1} \cdots Dy^{i_{n}} v^a \\
&&+ \frac1{n!}  (l'_{n+1} )^b\,_{i_1,i_2,\cdots, i_{n}, a}\, y^{i_1} \cdots y^{i_{n}} Dv^a \big) \frac\partial{\partial Dv^b},
\end{eqnarray*}
\begin{eqnarray*}
d^\rho &=&- \sum_{n=0}\frac1{n!} \big((\rho_{n+1})^i\,_{j_1\cdots j_n a} y^{j_1}\cdots y^{j_n} Dv^a \\
&&\qquad+(\rho_{n+2})^i\,_{j_1\cdots j_{n+1} a} y^{j_1}\cdots Dy^{j_{n+1}} v^a \big) \frac\partial{\partial Dy^i} ,
\end{eqnarray*}
\begin{eqnarray*}d^\mu &=& -\sum_{n=0}\frac1{n!} \left((\mu_{n+2})^a\,_{j_1\cdots j_n bc} y^{j_1}\cdots y^{j_n} Dv^b \wedge v^c\right) \frac\partial{\partial Dv^a} \\
&&-\sum_{n=0}\frac1{n!} \frac1{2!} \left((\mu_{n+3})^a\,_{j_1\cdots j_{n+1} bc} y^{j_1}\cdots Dy^{j_{n+1}} v^b\wedge v^c \right) \frac\partial{\partial Dv^a} .
\end{eqnarray*}
\end{lemma}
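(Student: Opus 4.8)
The plan is to compute $d^{\nabla}_{\geq1}$, $d^{\nabla'}$, $d^\rho$ and $d^\mu$ directly on the de Rham complex $\Omega^{\star\geq1}_{B\mathfrak g}$ by dualising the action of the brackets on the module $T_{B\mathfrak g} \cong C^*(\mathfrak g)\otimes_{\Omega_M}\mathfrak g[1]$, using that each differential on $\mathcal O_{B\mathfrak g}$ is a derivation and that $d^{DR}$ sends $y^i\mapsto Dy^i$, $v^a\mapsto Dv^a$ and commutes (up to sign) with the CE differentials in the sense that the extension to $\Omega_{B\mathfrak g}$ is characterised by requiring it to be a derivation anti-commuting with $d^{DR}$. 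Concretely, the extended differential on $\Omega_{B\mathfrak g}$ acts on the ``new" generators $Dy^i$, $Dv^a$ by $d(Dy^i) = -D(d y^i)$, $d(Dv^a)=-D(d v^a)$, where $D=d^{DR}$; expanding the right-hand sides using the local formulas for $d^\nabla$, $d^{\nabla'}$, $d^\rho$, $d^\mu$ on $\mathcal O_{B\mathfrak g}$ given earlier in Section~\ref{linfty} (and recalled in Appendix~\ref{appA}) produces exactly the coefficient of $\partial/\partial Dy^i$ resp.\ $\partial/\partial Dv^a$ displayed in the statement. This is essentially a bookkeeping computation with the Leibniz rule.

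First I would record the local expressions for $d^\nabla(y^i)$, $d^{\nabla'}(v^a)$, $d^\rho(y^i)$ and $d^\mu(v^a)$ as elements of $\mathcal O_{B\mathfrak g}$, namely sums of the form $\frac1{n!}(l_{n+1})^i{}_{i_1\cdots i_{n+1}}y^{i_1}\cdots y^{i_{n+1}}$ and its analogues. Then I would apply $D=d^{DR}$ to each monomial: by the graded Leibniz rule for $D$ (a degree-$1$ derivation on $\Omega_{B\mathfrak g}$), $D(y^{i_1}\cdots y^{i_{n+1}})$ is a sum of $n+1$ terms in which exactly one factor $y^{i_k}$ is replaced by $Dy^{i_k}$, each with the appropriate Koszul sign; by total symmetry of the coefficient $(l_{n+1})^i{}_{i_1\cdots i_{n+1}}$ in the lower indices these $n+1$ terms coincide after relabelling, which converts the $\frac1{(n+1)!}$ into $\frac1{n!}$ and places the $D$ on the last slot, giving the stated $\frac1{n!}(l_{n+1})^i{}_{i_1\cdots i_{n+1}}y^{i_1}\cdots Dy^{i_{n+1}}$. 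The same manoeuvre applied to the mixed monomials $y^{\cdots}v^a$ in $d^{\nabla'}$, $d^\rho$, $d^\mu$ splits into a term with $D$ on a $y$-factor and a term with $D$ on the (single, resp.\ for $d^\mu$ one of two) $v$-factor, producing the two-line formulas; the relative sign for the $d^\rho$ and $d^\mu$ lines comes from moving $D$ past the intervening generators and from the $-D(d(-))$ convention, which accounts for the overall minus signs written there. Finally I would note that the $\nabla$, $\nabla'$ pieces of the extended differential are just the induced connections on $\widehat{\rm Sym}(T_M^\vee\oplus V^\vee[-1])\otimes(T_M\oplus V[1])$ restricted to the $Dy$, $Dv$ generators, giving the leading $\nabla$, $\nabla'$ terms, and that $\delta$ contributes nothing of $\star\geq1$ degree beyond what is absorbed into these.

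The main obstacle I anticipate is \emph{sign bookkeeping}: tracking the Koszul signs when $D$ is commuted past the $y$- and $v$-generators (which carry degrees $0$ and $-1$ in $\mathfrak g[1]$, hence their images $Dy$, $Dv$ carry degrees shifted by the de Rham degree $+1$), and reconciling the $-D(d(-))$ sign convention on the new generators with the explicit minus signs in the $d^\rho$, $d^\mu$ formulas versus their absence in $d^\nabla$, $d^{\nabla'}$. I would resolve this by fixing once and for all the rule that $d$ and $D$ are both degree-$+1$ derivations on the bicomplex $\Omega_{B\mathfrak g}$ with $[d,D]=0$ as derivations (equivalently $d\circ D = -D\circ d$ on generators after accounting for $d$ being odd), so that $d(Dx) = -D(dx)$ with no further ambiguity, and then carrying out the expansion symbol by symbol for the four cases; the symmetry of the structure-constant tensors does all the combinatorial work. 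No genuinely new idea is needed beyond Proposition~\ref{propSol} and the local formulas already in hand, so the lemma follows by direct computation.
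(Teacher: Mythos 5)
Your proposal is correct and is exactly the (omitted) proof the paper intends: the extension of each CE component to $\Omega_{B\mathfrak g}$ is characterised by being a derivation with $d(Dy^i)=-D(d y^i)$, $d(Dv^a)=-D(d v^a)$, and the stated formulas follow by Leibniz expansion plus the symmetry of the structure constants, which converts $\tfrac{1}{(n+1)!}$ into $\tfrac{1}{n!}$ as you say. One point to make explicit when you carry out the sign bookkeeping: the asymmetry of the overall signs (none for $d^{\nabla}_{\geq1},d^{\nabla'}$, a minus for $d^{\rho},d^{\mu}$) comes not from moving $D$ past the generators (the $y^i$ are even) but from the parity of the coefficient tensors themselves --- the components $(l_{n+1})^i{}_{\cdots}$ and $(l'_{n+1})^b{}_{\cdots}$ are one-form valued, hence odd, so commuting the odd derivation $D$ past them cancels the minus from the $-D(d(-))$ convention, whereas the $(\rho_{n+1})$, $(\mu_{n+2})$ components have form degree $0$, hence are even, and the minus survives.
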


Let $W$ denote the symmetric algebra
$\Gamma\big(\widehat{{\rm Sym}} ( T^\vee_M[-1] \oplus V^\vee[-1]\oplus V^\vee[-2])\big)$.

\begin{lemma}
The exists an $\mathbb R$-linear special deformation retract
\[ \xymatrix{ %
\left(W,0\right)
\ar@<2pt>[rr]^{\iota_0} 
&&
\left(\Omega_{B\mathfrak g}, \delta\right)
\ar@<2pt>[ll]^{p_0}
\save +<7mm,0mm>\ar@(ur,dr)^{-\delta^{-1}}\restore
} \]
where $\iota_0$ sends elements in $W$ identically into the bi-degree $(0,0)$ component, and $p_0$ is the obvious projection.
\label{HEcontext}
\end{lemma}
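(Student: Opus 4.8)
The plan is to establish the special deformation retract (SDR) data $(\iota_0, p_0, -\delta^{-1})$ directly from the bigrading on $\Omega_{B\mathfrak g}$ and then verify the three SDR identities together with the three ``side conditions'' that make it \emph{special}. First I would fix the bookkeeping: on $\Omega_{B\mathfrak g} \cong C^*(\mathfrak g)\otimes \widehat{{\rm Sym}}(\mathfrak g^\vee[-1])\langle Dy^i, Dv^a\rangle$ assign to a monomial the bidegree $(p,q)$ where $p$ counts the total number of $Dy^i$ and $Dv^a$ factors (the de Rham/$\Omega^1_{B\mathfrak g}$-degree) and $q$ counts the number of $y^i$ (i.e. $T_M^\vee$-type) generators appearing without a $D$. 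Then $\delta = dx^i \partial/\partial y^i$ raises $p$ by... actually one checks $\delta$ on $\Omega_{B\mathfrak g}$ acts by $Dy^i \mapsto 0$ and sends a $y^i$ to $dx^i$, so in the relevant bookkeeping $\delta$ drops $q$ by one while leaving $p$ fixed, matching the picture in the $C^*(\mathfrak g)$ case; the contraction $-\delta^{-1}$ is the same local operator $\tfrac1{p+q}y^i\iota_{\partial/\partial x^i}$ extended by zero on the $D$-generators. I would state precisely how $\delta$ and $\delta^{-1}$ act on each of the four types of generators ($x^i$, $y^i$, $Dy^i$, $Dv^a$, $v^a$) and record that $W = \Gamma(\widehat{{\rm Sym}}(T_M^\vee[-1]\oplus V^\vee[-1]\oplus V^\vee[-2]))$ is exactly the subspace of monomials with $q=0$ and no $y^i$-generators, i.e. the kernel of the ``number of $x$-free $y$'s'' operator intersected with the image-complement of $\delta$.

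Next I would verify the homotopy identity $\delta\circ(-\delta^{-1}) + (-\delta^{-1})\circ\delta = \mathrm{id} - \iota_0 p_0$. This is the standard Poincaré-lemma-type computation: on monomials with $p+q>0$ one has $\delta\delta^{-1}+\delta^{-1}\delta = \mathrm{id}$ exactly as quoted earlier in the paper (in the proof of Proposition \ref{propSol}), and the only place this fails is the $p=q=0$ sector together with the purely $D$-generator directions, which is precisely where $\iota_0 p_0$ is supported. Concretely: decompose $\Omega_{B\mathfrak g}$ according to the number of $x$-free $y$-generators; on the $\geq 1$ part $\delta^{-1}$ is a genuine contracting homotopy, and $\iota_0 p_0$ kills everything there and is the identity on the complementary (``$y$-free'') part where $\delta$ and $\delta^{-1}$ both vanish. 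Then $p_0\iota_0 = \mathrm{id}_W$ is immediate since $\iota_0$ is the identity inclusion into the bidegree-$(0,0)$, $y$-free component and $p_0$ the corresponding projection, and $p_0 \circ \iota_0 = \mathrm{id}$, $p_0 \circ \delta^{-1}$... — here I must also check the side conditions $p_0\circ(-\delta^{-1}) = 0$, $(-\delta^{-1})\circ\iota_0 = 0$, and $(-\delta^{-1})\circ(-\delta^{-1}) = 0$. The first two are immediate because $\delta^{-1}$ lands in the image of $y^i$ (hence has a $y$-generator, so $p_0$ annihilates it) and $\iota_0$ lands in the $y$-free part (on which $\delta^{-1}=0$); the last, $\delta^{-1}\delta^{-1}=0$, follows since applying $y^i\iota_{\partial/\partial x^i}$ twice produces $y^i y^j \iota_{\partial/\partial x^i}\iota_{\partial/\partial x^j}$ which is symmetric in $i,j$ from the $y$'s and antisymmetric from the form-contractions, hence zero — this is a one-line check identical to the one sketched after \eqref{soleleeqn2}.

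The main subtlety, and where I would spend the most care, is getting the bigrading right on the de Rham part: one must be sure that the operator $\delta$ on $\Omega_{B\mathfrak g}$ (as opposed to on $C^*(\mathfrak g)$) genuinely commutes with $d^{DR}$ in the sense that $\delta(Dy^i)=0$ and $\delta(Dv^a)=0$, so that the $D$-generators are ``inert'' for the retraction and $W$ correctly includes the $V^\vee[-2]$ summand coming from the $Dv^a$'s (shifted degree) and the $T_M^\vee[-1]$ summand from the $Dy^i$'s. Equivalently, $-\delta^{-1}$ and $\iota_0,p_0$ are defined fibrewise over the ``de Rham'' generators $Dy^i,Dv^a$, and one is really just taking the Poincaré-lemma SDR of $(\widehat{{\rm Sym}}(T_M^\vee[-1])\langle Dy\rangle, \delta)$ and tensoring with the inert factors; I would phrase the argument this way to avoid a messy index computation. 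I do not expect any genuine obstacle beyond this bookkeeping — the analytic content is exactly the formal Poincaré lemma already invoked repeatedly in Section \ref{linfty}, so the proof is short once the grading conventions are pinned down. I would close by remarking that the SDR is manifestly $\mathbb R$-linear (indeed $C^\infty_M$-linear, but only $\mathbb R$-linearity is claimed because $\delta^{-1}$ does not commute with $d^{DR}$), which is all that is needed for the subsequent homological perturbation argument.
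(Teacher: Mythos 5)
Your proposal is correct and follows essentially the same route as the paper, which simply verifies the four SDR identities $p_0\iota_0=\mathrm{id}$, $\iota_0 p_0=\mathrm{id}-\delta\delta^{-1}-\delta^{-1}\delta$, $\delta^{-1}\iota_0=0=p_0\delta^{-1}$ and $\delta^{-1}\delta^{-1}=0$ directly from the explicit formulas, with the $D$-generators inert under $\delta$ exactly as you note. The only slip is a sign convention in your homotopy identity (with $h=-\delta^{-1}$ one gets $\delta h+h\delta=\iota_0 p_0-\mathrm{id}$, matching the paper's $\iota_0 p_0=\mathrm{id}-\delta\delta^{-1}-\delta^{-1}\delta$), which does not affect the substance.
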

\begin{proof}
It is easy to check that the following holds,
\[p_0\circ \iota_0=id,\quad \iota_0\circ p_0=id-\delta\circ \delta^{-1}-\delta^{-1}\circ \delta,\]
\[\delta^{-1}\circ\iota_0 = 0 = p_0\circ \delta^{-1},\quad \delta^{-1}\circ \delta^{-1}=0.\]
\end{proof}

\begin{proposition}
The exists an $\mathbb R$-linear special deformation retract
\[ \xymatrix{ %
\left(W,0\right)
\ar@<2pt>[rr]^(.35){\iota} 
&&
\left(\Omega_{B\mathfrak g}, \delta+d^{\nabla}_{\geq1}+d^{\nabla'}_{\geq1}\right)
\ar@<2pt>[ll]^(.65){p}
\save +<19mm,0mm>\ar@(ur,dr)^{h}\restore
} \]
which sends a section from the left-and side to a $(\delta+d^{\nabla}_{\geq1}+d^{\nabla'}_{\geq1})$-cocycle in $\Omega_{B\mathfrak g}$.
\label{propJets1}
\end{proposition}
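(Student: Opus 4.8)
The plan is to build the special deformation retract (SDR) between $(W,0)$ and $\bigl(\Omega_{B\mathfrak g},\delta+d^\nabla_{\geq1}+d^{\nabla'}_{\geq1}\bigr)$ by perturbing the SDR of Lemma~\ref{HEcontext} via the standard homological perturbation lemma (HPL). The operator $D_0:=\delta$ is the unperturbed differential, with contraction $-\delta^{-1}$, projection $p_0$, and inclusion $\iota_0$; the perturbation is $t:=d^\nabla_{\geq1}+d^{\nabla'}_{\geq1}$, which by the explicit formulas in the preceding Lemma raises the $\widehat{{\rm Sym}}$-degree in $T^\vee_M$ (more precisely, it increases the ``$\delta^{-1}$-filtration degree'' $p+q$ that controls the homotopy $\delta^{-1}$ on $\Omega^p_M\otimes\Gamma({\rm Sym}^q(T^\vee_M))$). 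First I would record that $\delta^{-1}$ is locally $\tfrac1{p+q}\,y^i\iota_{\partial/\partial x^i}$ and strictly decreases this filtration degree, hence the composite $\delta^{-1}t$ is \emph{locally nilpotent} on each fixed total degree: the series $\sum_{n\geq0}(-\delta^{-1}t)^n$ converges. This is exactly the small-perturbation hypothesis needed for HPL.

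Granting that, HPL outputs the perturbed data
\[
\iota \;=\; \sum_{n\geq0}(-\delta^{-1}t)^n\,\iota_0,\qquad
p \;=\; p_0\sum_{n\geq0}(t(-\delta^{-1}))^n,\qquad
h \;=\; \sum_{n\geq0}(-\delta^{-1}t)^n(-\delta^{-1}),
\]
together with a perturbed differential on the left-hand complex equal to $p_0\bigl(\sum_{n\geq0}t(-\delta^{-1})^n\bigr)t\,\iota_0$. I would then check this perturbed differential vanishes: since $p_0$ projects onto bi-degree $(0,0)$, $\iota_0$ lands there, and every term of $t$ visibly raises the $T^\vee_M$-degree while $\delta^{-1}$ can only lower it by one, any nonzero word $p_0\,t(-\delta^{-1})^{k}t\,\iota_0$ would have to return to bi-degree $(0,0)$, which a parity/degree count on the number of $\delta^{-1}$'s versus $t$'s rules out — equivalently this is the statement, familiar from Fedosov theory and already used implicitly in Appendix~\ref{appA}, that $(\delta+d^\nabla_{\geq1}+d^{\nabla'}_{\geq1})$ has cohomology concentrated in the "constant" part $W$. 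Hence the left differential is $0$, giving the claimed SDR with $(W,0)$. Finally I would note $\iota$ sends $W$ to a $(\delta+d^\nabla_{\geq1}+d^{\nabla'}_{\geq1})$-cocycle: this is automatic from the SDR identity $(\delta+d^\nabla_{\geq1}+d^{\nabla'}_{\geq1})\iota=\iota\cdot 0=0$.

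The only mildly delicate points are bookkeeping ones: (i) verifying the HPL side conditions ($h\iota=0$, $p h=0$, $h^2=0$) survive the perturbation — these are standard and follow from the annihilation identities $\delta^{-1}\iota_0=0=p_0\delta^{-1}$, $\delta^{-1}\delta^{-1}=0$ of Lemma~\ref{HEcontext}; and (ii) making sure the filtration argument for local nilpotency of $\delta^{-1}t$ is set up on the correct grading, namely that both $d^\nabla_{\geq1}$ and $d^{\nabla'}_{\geq1}$ (note: the $n=0$ pieces $\nabla,\nabla'$ are included) do not spoil it — indeed $\nabla$ and $\nabla'$ preserve $T^\vee_M$-degree and form-degree, so $\delta^{-1}\nabla$ still strictly drops the filtration, and the higher brackets only help. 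I expect the main obstacle to be purely expository: packaging the degree/filtration argument cleanly enough that local nilpotency, and therefore convergence of the HPL series, is manifest; once that is in place the rest is the formal machinery of the perturbation lemma.
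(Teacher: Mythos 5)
Your proposal is essentially the paper's own proof: the paper likewise applies the Homological Perturbation Lemma (citing Crainic) to the special deformation retract of Lemma~\ref{HEcontext} with perturbation $d_{\geq1}=d^{\nabla}_{\geq1}+d^{\nabla'}_{\geq1}$, obtaining $\iota=\sum_{n\geq0}(-\delta^{-1}\circ d_{\geq1})^n\circ\iota_0$, $p=p_0$, $h=-\delta^{-1}\circ\sum_{n\geq0}(-d_{\geq1}\circ\delta^{-1})^n$, and deduces the cocycle property from $(\delta+d_{\geq1})\circ\iota=0$ exactly as you do. One bookkeeping correction: $\delta^{-1}$ does not lower the filtration degree $p+q$ (it trades one unit of form degree for one unit of ${\rm Sym}(T^\vee_M)$-degree), so convergence of the HPL series is not literal nilpotency in a fixed total degree but rather the fact that $\delta^{-1}\circ d_{\geq1}$ strictly raises the $\widehat{{\rm Sym}}(T^\vee_M)$-degree, making the series degreewise finite in the completion; your form-degree count showing the induced differential on $W$ vanishes (and likewise that $p$ reduces to $p_0$) is correct as stated.
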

\begin{proof}

Consider the special deformation retract as in Lemma \ref{HEcontext}, and consider a perturbation of $\delta$ by $d_{\geq1}\equiv d^{\nabla}_{\geq1}+d^{\nabla'}_{\geq1}$. By the Homological Perturbation Lemma \cite{Crainic04}, there exists a perturbed special deformation retract, with 
\[\iota = \sum_{n=0}^\infty (-\delta^{-1}\circ d_{\geq1})^n\circ \iota_0,\quad p=p_0,\]
\[h=-\delta^{-1}\circ \sum_{n=0}^\infty (-d_{\geq1}\circ \delta^{-1})^n ,\]
such that $(\delta+d_{\geq1})\circ \iota = 0$ is satisfied. So the image of $\iota$ is in the cocycle of $(\Omega_{B\mathfrak g}, \delta+d^{\nabla}_{\geq1}+d^{\nabla'}_{\geq1})$. 
\end{proof}

It is useful to obtain a explicit form of the embedding $\iota$. 
\begin{proposition}
Let $\{y^i\}_{i=1}^{{\rm dim}(M)}$ be a local basis for $\Gamma( T^\vee_M)$ chosen as before.
Given any $f\in W$, there exists a unique element $\tilde f$ in $\Omega_{B\mathfrak g}$ such that
\begin{itemize}
\item $\tilde f = f+\text{terms at least linear in $y^i$'s.}$, and
\item $(\delta+d^{\nabla}_{\geq1}+d^{\nabla'}) \tilde f = 0$.
\end{itemize} 
Furthermore, $\iota(f) = \tilde f$.
\label{propJets2}
\end{proposition}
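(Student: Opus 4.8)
The plan is to identify the desired $\tilde f$ with the image $\iota(f)$ of the embedding produced by Proposition~\ref{propJets1}, so that existence, the normal form of the first bullet, and the ``Furthermore'' clause all come essentially for free, and then to establish uniqueness by a degree recursion.

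\emph{Existence, normal form, and $\iota(f)=\tilde f$.} I would set $\tilde f:=\iota(f)$. By Proposition~\ref{propJets1} the map $\iota$ takes values in the $(\delta+d^{\nabla}_{\geq1}+d^{\nabla'})$-cocycles, so the second bullet is immediate. For the first bullet I use the explicit formula $\iota=\sum_{n\geq0}(-\delta^{-1}\circ d_{\geq1})^{n}\circ\iota_0$ from Proposition~\ref{propJets1}, with $d_{\geq1}=d^{\nabla}_{\geq1}+d^{\nabla'}_{\geq1}$: the $n=0$ term is $\iota_0(f)=f$ placed in bi-degree $(0,0)$, while every $n\geq1$ term carries a factor $\delta^{-1}=\frac{1}{p+q}y^i\iota_{\partial/\partial x^i}$ and is therefore at least linear in the $y^i$'s; moreover, in the $\delta^{-1}$ gauge each of $\nabla,\nabla',d^{\nabla}_{\geq2},d^{\nabla'}_{\geq2}$ raises the $\Omega_M$-form degree by exactly one while $\delta^{-1}$ lowers it by one, so $\iota(f)$ stays in the form-degree-zero part $\widehat{{\rm Sym}}(T_M^\vee)\otimes W$. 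Hence $\tilde f=f+(\text{terms at least linear in the }y^i)$, and since $\iota(f)$ is then a candidate, the ``Furthermore'' part follows once uniqueness is shown.

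\emph{Uniqueness.} Suppose $\tilde f'$ also satisfies both bullets and put $g:=\tilde f'-\tilde f$. Then $(\delta+d^{\nabla}_{\geq1}+d^{\nabla'})g=0$, the $y$-degree-zero part of $g$ vanishes, and all of $g$ is of $\Omega_M$-form degree zero. Filter by the $y^i$-degree, $g=\sum_{q\geq1}g_{[q]}$. Since $\delta$ lowers the $y$-degree by one whereas $d^{\nabla}_{\geq1}+d^{\nabla'}$ never lowers it, the $y$-degree-$q$ part of the cocycle equation reads
\[\delta\,g_{[q+1]}=-\bigl((d^{\nabla}_{\geq1}+d^{\nabla'})\,g\bigr)\big|_{y\text{-degree }q},\]
and the right-hand side involves only $g_{[0]},\dots,g_{[q]}$. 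I would now induct on $q$: the base $g_{[0]}=0$ holds by hypothesis, and if $g_{[j]}=0$ for all $j\leq q$ then $\delta\,g_{[q+1]}=0$; because $g_{[q+1]}$ has form degree $0$ and $y$-degree $\geq1$, and $\delta^{-1}$ annihilates form-degree-zero elements, the identity $\delta\delta^{-1}+\delta^{-1}\delta=\mathrm{id}$ from Lemma~\ref{HEcontext} (valid on bi-degrees $(0,q)$ with $q\geq1$) gives $g_{[q+1]}=\delta^{-1}\delta\,g_{[q+1]}=0$. Therefore $g=0$, proving uniqueness and, with it, $\iota(f)=\tilde f$.

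\emph{Main obstacle.} The subtle step is uniqueness: knowing only $\delta\,g_{[q+1]}=0$ places $g_{[q+1]}$ in $\ker\delta$, which is nonzero in positive total degree, and such a $\delta$-exact ambiguity would otherwise propagate consistently through the recursion at every order. What kills it is that one works entirely in form degree zero (equivalently, in the $\delta^{-1}$ gauge in which $\iota$ is normalised), where $\delta$ is injective on positive $y$-degree precisely because $\delta^{-1}$ vanishes on form-degree-zero elements. I would therefore record this injectivity as a short preliminary lemma, read off directly from Lemma~\ref{HEcontext}, after which the induction above is careful bookkeeping of form and $y$ degrees; the same recursion, solved with $\delta^{-1}$, also provides a self-contained construction of $\tilde f$ independent of the Homological Perturbation Lemma.
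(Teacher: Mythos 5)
Your proposal is correct and follows essentially the same route as the paper: both arguments rest on the explicit perturbation series $\iota=\sum_{n\geq0}(-\delta^{-1}\circ d_{\geq1})^{n}\circ\iota_0$ to exhibit the normal form, and on the fact that $\delta$ is injective (an isomorphism onto its image) on form-degree-zero components at least linear in the $y^i$'s, which drives the same degree-by-degree recursion; you merely reverse the order, taking $\iota(f)$ as the candidate and then proving uniqueness, whereas the paper constructs $\tilde f$ recursively and identifies it with $\iota(f)$ afterwards. Your assertion that the difference $g$ has $\Omega_M$-form degree zero is the same implicit reading of ``terms at least linear in $y^i$'s'' that the paper's own uniqueness argument uses, so there is no gap relative to the intended statement.
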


\begin{proof}
The existence of $\tilde f$ is through a recursive computation starting with 
$\delta f = 0$ and $\delta (\text{terms linear in $y^i$'s})+(d^\nabla_1+d^{\nabla'}_{1}) f=0$, etc. Notice that such $\tilde f$ is also unique, since $\delta$ is an isomorphism over components \[\Gamma\big(\widehat{{\rm Sym}}^{\geq1} ( T^\vee_M)\otimes\widehat{{\rm Sym}} ( T^\vee_M[-1] \oplus V^\vee[-1]\oplus V^\vee[-2])\big).\] We claim that the map which sends $\forall f\in W$ to $\tilde f$ is precisely $\iota$. By the explicit formula, 
\[\iota(f) = f+\sum_{n=1}^\infty (-\delta^{-1}\circ d_{\geq1})^n(f),\]
where the second part contains terms at least linearly dependent on $y^i$'s. By the uniqueness, we have that $\iota(f)=\tilde f$.
\end{proof}

\begin{example}
Propositions \ref{propJets1} and \ref{propJets2} work for arbitrary gauge.
In the $\delta^{-1}$ gauge, a simplification for $\iota$ exists. Over the component \[\Gamma\big(\widehat{{\rm Sym}} ( T^\vee_M\oplus T^\vee_M[-1] \oplus V^\vee[-1]\oplus V^\vee[-2])\big),\] 
the operator $\delta^{-1} \circ d_{\geq1} = \delta^{-1}\circ (d^\nabla_1+d^{\nabla'}_1)$.
So \[\iota = \sum_{n=0}^\infty (-\delta^{-1}\circ (d^\nabla_1+d^{\nabla'}_1))^n\circ \iota_0.\]
In particular we have that 
\[f(x)-\partial_i f(x)y^i+\cdots,\]
\[Dy^i-\Gamma^i_j\,_k y^j Dy^k+\cdots,\] 
\[v^a - \Gamma'^a_j\,_b y^j v^b+\cdots\]
and
\[Dv^a - \Gamma'^a_j\,_b y^j Dv^b+\frac12R'_{jm}\,^a\,_b y^m Dy^j v^b+\cdots\]
are flat sections to $\delta+d^\nabla_{\geq1}+d^{\nabla'}$, where we have omitted the terms containing quadratic or higher powers of $y$.
\end{example}

To conclude, for any $f\in W$, one computes that \[\tilde f = f-\bar\nabla_i fy^i+\text{terms at least quadratic in $y^i$'s}.\]
Using Proposition \ref{propJets1}, we can induce a new differential $D$ on $W$ by 
\[p\circ (d^\rho+d^\mu+d^{DR})\circ\iota.\]
For degree reason, to compute $D(f)$ it is only necessary to compute the image of $f-\bar\nabla_i fy^i$ under $d^{\rho}_{\leq2}+d^{\mu}_{\leq3}+d^{DR}$, where $\bar\nabla$ is the connection on $W$ induced by $\nabla$ and $\nabla'$ on $T_M$ and on $V$. The differential $D$ is at most first order,  and can be computed component-wise.

There is a known differential on the Weil algebra $W$ as described in \cite{AAC, AAC11}.
To compare the differential $D$ with the known differential, we denote by horizontal differential $d_{//}:=\delta+d^{\nabla}_{\geq1}+d^{\nabla'}$ and vertical differential $d_{\bot}:=d^{\rho}+d^{\mu}+d^{DR}$.
\begin{proposition}
The horizontal cohomology algebra $H_{//}(\Omega_{B\mathfrak g}, d_{//}+d_{\bot})$ is isomorphic to $(W, D)$, which is precisely the Weil algebra for Lie algebroid defined in \cite{AAC, AAC11}.
\end{proposition}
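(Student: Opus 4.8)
The plan is to break the statement into two parts: (i) that $H_{//}(\Omega_{B\mathfrak g},d_{//}+d_{\bot})\cong(W,D)$ as commutative dg algebras, where $D=p\circ(d^{\rho}+d^{\mu}+d^{DR})\circ\iota$ is the differential introduced just above; and (ii) that this $(W,D)$ coincides, as a commutative dg algebra, with the Weil algebra of the Lie algebroid $V$ and its differential as defined in \cite{AAC, AAC11}.

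For (i) I would apply the Homological Perturbation Lemma once more, now perturbing the horizontal differential rather than $\delta$. Starting from the special deformation retract of Proposition~\ref{propJets1} between $(W,0)$ and $(\Omega_{B\mathfrak g},d_{//})$ — with embedding $\iota$, projection $p=p_{0}$, and homotopy $h=-\delta^{-1}\circ\sum_{n\geq0}(-d_{\geq1}\circ\delta^{-1})^{n}$ — and noting that $d_{//}+d_{\bot}$ is the full CE differential on $\Omega_{B\mathfrak g}$, hence squares to zero, the lemma (as used in Proposition~\ref{propJets1}, cf.\ \cite{Crainic04}) produces a perturbed retract between $(W,D')$ and $(\Omega_{B\mathfrak g},d_{//}+d_{\bot})$ with $D'=p\circ d_{\bot}\circ\iota+\sum_{n\geq1}p\circ d_{\bot}\circ(h\circ d_{\bot})^{n}\circ\iota$; the perturbation is admissible because $d_{\bot}\circ h$ strictly lowers the de Rham degree along $M$, which is bounded, so the series terminates. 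The crucial observation is that every term with $n\geq1$ vanishes: $d_{\bot}=d^{\rho}+d^{\mu}+d^{DR}$ preserves the de Rham degree along $M$ (it creates only $y$'s, $v$'s, $Dy$'s, $Dv$'s, never a $dx^{i}$), while $\iota$ lands in the $dx$-degree-zero part of $\Omega_{B\mathfrak g}$, and $h$ begins by applying $\delta^{-1}$, which annihilates anything of $dx$-degree zero. Hence $h\circ d_{\bot}\circ\iota=0$ and $D'=D$. In particular $\iota\colon(W,D)\to(\Omega_{B\mathfrak g},d_{//}+d_{\bot})$ is a quasi-isomorphism; and it is an algebra map by the uniqueness of Proposition~\ref{propJets2} (the product of two $(\delta+d^{\nabla}_{\geq1}+d^{\nabla'})$-flat sections is flat and has the same bidegree-$(0,0)$ part as $\iota$ of the product, so it equals it). Since $p=p_{0}$ is an algebra map onto the bidegree-$(0,0)$ subalgebra, $D=p\circ d_{\bot}\circ\iota$ is a derivation, so $(W,D)$ is a commutative dg algebra.

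For (ii) I would compute $D$ on the generators of $W$ using $\iota(w)=w-\bar\nabla_{i}w\,y^{i}+(\text{terms at least quadratic in }y^{i})$ together with the explicit local expressions for $d^{\rho}$, $d^{\mu}$ recorded above and with $d^{DR}$. As $D$ is a derivation it suffices to treat $f\in C^{\infty}_{M}$, the $v^{a}$ (generators of $\Gamma(V^{\vee})$), the $Dy^{i}$ (generators of $\Gamma(T^{\vee}_{M}[-1])$), and the $Dv^{a}$ (generators of $\Gamma(V^{\vee}[-2])$); and since $p=p_{0}$ only detects the $y$-degree-$\leq1$ part of $d_{\bot}\iota(w)$, only $d^{\rho}_{\leq2}$, $d^{\mu}_{\leq3}$ and $d^{DR}$ contribute and $D$ is at most a first-order operator. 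On $f$, the $\partial/\partial y$ part of $d^{\rho}$ turns the linear term $-\partial_{i}f\,y^{i}$ into a term in $\Gamma(V^{\vee})$ proportional to $\langle\rho(-),df\rangle$, while $d^{DR}$ of that same linear term contributes the de Rham differential along $M$ in $\Gamma(T^{\vee}_{M}[-1])$. On $v^{a}$ one reads off the Chevalley--Eilenberg term governed by the structure constants $C^{a}_{bc}$ — identified with the Lie algebroid bracket in Proposition~\ref{prop:quadrel} — together with a $\Gamma(V^{\vee}[-2])$-valued "Weil" term coming from $d^{DR}$ and $\nabla'$; and on $Dy^{i}$, $Dv^{a}$ one gets the curvature and basic-connection pieces built from $\nabla,\nabla',R,R',\rho$ (using the component formulas already quoted after Proposition~\ref{prop:quadrel}). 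Comparing this list of structure maps, term by term, with the defining identities for the Weil-algebra differential of a Lie algebroid with auxiliary connections $\nabla,\nabla'$ in \cite{AAC, AAC11} identifies $(W,D)$ with that Weil algebra and finishes the proof.

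I expect the homological-algebra step (i) to be soft once the vanishing $h\circ d_{\bot}\circ\iota=0$ is noticed. The main obstacle is the bookkeeping in step (ii): matching the tensorial expressions for $D$, signs and normalizations included, with the differently packaged definition of the Weil differential in \cite{AAC, AAC11}, and checking that the gauge dependence carried by $\mu_{2+1}$ and by the higher brackets genuinely drops out under the antisymmetrizations forced by the wedge products in $\Gamma(V^{\vee})$, exactly as observed in the remark following Proposition~\ref{prop:quadrel}.
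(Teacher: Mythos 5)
Your proposal is correct and takes essentially the same approach as the paper: the identification of the horizontal cohomology with $(W,D)$, $D=p\circ(d^{\rho}+d^{\mu}+d^{DR})\circ\iota$, is exactly the construction the paper puts in place via Propositions \ref{propJets1} and \ref{propJets2} (your perturbation-lemma argument, with the key vanishing $h\circ d_{\bot}\circ\iota=0$ coming from the $dx$-degree bookkeeping, is simply a more explicit justification of that step, which the paper treats as already established). The substantive verification is the same as the paper's: compute $D$ on the generators $Dy^i$, $v^a$, $Dv^a$ in the $\delta^{-1}$ gauge from the flat sections and the local formulas for $d^{\rho}$, $d^{\mu}$, $d^{DR}$, and match the resulting expressions, via the structure constants, basic connections and basic curvature encoded in $\rho_{1+1}$ and $\mu_{2+1}$, with the Weil differential of \cite{AAC, AAC11}.
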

\begin{proof}
We only need to verify the last statement. To compare, we denote the basis of $T^\vee_M[-1] $, $V^\vee[-1]$ and $V^\vee[-2]$ by $\{z^i\}_{i=1}^{{\rm dim}(M)}$, $\{v^a\}_{a=1}^{{\rm rk}(V)}$ and $\{w^a\}_{a=1}^{{\rm rk}(V)}$ respectively, then compute the local expressions of $D(z^i)$, $D(v^a)$ and $D(w^a)$.
The relevant differentials are
\[d^\rho = \rho^i\,_{ a}  v^a\frac\partial{\partial y^i} -  \left(\rho^i\,_{ a}  Dv^a+ \rho^i_{j a}  Dy^{j} v^a \right) \frac\partial{\partial Dy^i} ,\]
\[d^\mu = \frac12\mu^a_{ bc}v^b\wedge v^c \frac\partial{\partial v^a} - \left(\mu^a_{ bc}  Dv^b \wedge v^c+ \frac1{2!}\mu^a_{j bc}  Dy^{j} v^b\wedge v^c \right) \frac\partial{\partial Dv^a} .\]
So under the composition map $p\circ d_{\bot} \circ \iota$ on $W$, one has that
\begin{eqnarray*}Dy^i&\mapsto& Dy^i-\Gamma^i_j\,_k y^j Dy^k+\cdots\\
&\mapsto& -  \left(\rho^i\,_{ a}  Dv^a+ \rho^i\,_{j a}  Dy^{j} v^a \right) -\rho^j\,_{ a}   \Gamma^i_j\,_k v^a Dy^k \\
&& -  \left(\rho^i\,_{ a}  Dv^a - [-(\Gamma)_j^i\,_k (\rho)^k\,_a  +\partial_j(\rho^i\,_a)  + \rho^i\,_b(\Gamma')_j^b\,_a ]  Dy^{j} v^a \right) \\
&&-\rho^j\,_{ a}   \Gamma^i_j\,_k  v^a Dy^k +\cdots \\
&\mapsto&-  \rho^i\,_{ a}  Dv^a +\left(\partial_j(\rho^i\,_a)  + \rho^i\,_b(\Gamma')_j^b\,_a )  Dy^{j} v^a \right),
\end{eqnarray*}
where the relation
\[ \rho^i\,_{j a} = (\Gamma)_j^i\,_k (\rho)^k\,_a  -\partial_j(\rho^i\,_a)  -\rho^i\,_b(\Gamma')_j^b\,_a  \]
has been used.

Similarly
\begin{eqnarray*}
v^a&\mapsto& v^a - \Gamma'^a_j\,_b y^j v^b+\cdots\\
&\mapsto& Dv^a- \Gamma'^a_j\,_b Dy^j v^b+(\frac12\mu^a_{ bc} - \rho^j\,_{ b}  \Gamma'^a_j\,_c ) v^b\wedge v^c+\cdots \\
&\mapsto&Dv^a- \Gamma'^a_j\,_b Dy^j v^b-\frac12C_{bc}^av^b\wedge v^c,
\end{eqnarray*}

\begin{eqnarray*}
Dv^a&\mapsto& Dv^a - \Gamma'^a_j\,_b y^j Dv^b+\frac12R'_{jm}\,^a\,_b y^m Dy^j v^b+\cdots \\
&\mapsto& -\left(\mu^a_{ bc}  Dv^b \wedge v^c+ \frac1{2!}\mu^a_{j bc}  Dy^{j} v^b\wedge v^c \right) - \rho^j\,_{ b}  \Gamma'^a_j\,_c  v^b\wedge Dv^c\\
&&+\frac12 R'_{jm}\,^a\,_b \rho^m\,_c v^c\wedge Dy^j\wedge v^b\\
&&- \Gamma'^a_j\,_b Dy^j Dv^b+\frac12R'_{jm}\,^a\,_b Dy^m Dy^j v^b+\cdots\\
&\mapsto& -[(\Gamma')_{i}\,^a_c  (\rho)^i\,_b -C_{bc}^a]  Dv^b \wedge v^c- \Gamma'^a_j\,_b Dy^j Dv^b\\
&&+\frac12R'_{jm}\,^a\,_b Dy^m Dy^j v^b
+ \frac1{2!}R^{bas}\,_{bcj}\,^a Dy^{j} v^b\wedge v^c.
\end{eqnarray*}
We have used the relation
\[\mu^a_{j bc} = -(R^{bas}(v_b, v_c)y_j)^a+ \frac12 R'_{ij}\,_b\,^a (\rho)_c\,^i-\frac12 R'_{ij}\,_c\,^a (\rho)_b\,^i,\]
which follows from the definition of basic curvature \cite{AAC}
\[R^{bas}(v_b, v_c)y_j=\nabla'_j[v_b, v_c]-[\nabla'_jv_b, v_c]-[v_b, \nabla'_jv_c]+\nabla'_{\nabla^{bas}_{v_b} y_j} v_c-\nabla'_{\nabla^{bas}_{v_c} y_j} v_b,\]
(where
\[\nabla^{bas}_{v_b} y_j = - \nabla_j\rho(v_b)+\rho(\nabla'_j v_b)+\nabla_{\rho(v_b)}y_j\]
is the corresponding basic connection, as we shall recall at the beginning of Section \ref{ss:modular}, see also \cite{AAC}).
The calculation is straightforward using an explicit formula for $\mu_{3}$ given in Appendix \ref{appB}.
\end{proof}

A more functorial approach of the result was given in \cite{GG_l}, where a homotopy functor $enh_{mod}$ from the category of Lie algebroid representations up to homotopy to (dg-)vector bundles over $L_\infty$ spaces was constructed. However, both the representations up to homotopy and the $L_\infty$ structures rely on some choices. Our result shows that, if one fixes the connections on $T_M\oplus V$ once for all, a one-to-one correspondence can be achieved on the nose. Moreover, by specifying different gauge conditions on the $L_\infty$ structure that encodes the Lie algebroid $V$, it is possible to induce more differentials on the Weil algebra $W$.

\begin{remark}
Since the vertical differential on $\Omega_{B\mathfrak g}$ decomposes into $d^{DR}$ and $d^\mu+d^\rho$, there is also a decomposition on the differential $D$ over $W$, which we write as $D^{DR}+D^{\mu+\rho}$. In particular, we have that
\[D^{DR}(Dy^i)=0,\quad D^{DR}(v^a)=Dv^a- \Gamma'^a_j\,_b Dy^j v^b,\]
\[D^{DR}(Dv^a)=- \Gamma'^a_j\,_b Dy^j Dv^b+\frac12R'_{jm}\,^a\,_b Dy^m Dy^j v^b.\]
\label{weildr}
\end{remark}

\subsection{Atiyah-Chern class}

The Atiyah class plays a fundamental role in the construction of the Atiyah-Chern classes through the Chern-Weil theory, which is defined as
\[c_k:={\rm STr}\big(At(\nabla_{B\mathfrak g})^{\wedge k}\big)\in \Omega^k_{B\mathfrak g},\]
where ${\rm STr}$ stands for the super-trace.

Since $\Omega_{B\mathfrak g}$ is bi-graded (in the way described at the beginning of Section \ref{weilalgsubsec}), the $k$-th Atiyah-Chern class $c_k$ may exhibit nontrivial (bi-)degree $(p,q)$ components for all $ p,q\in \mathbb Z_{\geq0}$ satisfying $p+q = k$ in $\Omega_{B\mathfrak g}$. The usual transgression is generalised to deal with mixed degrees here. The transgression enables us find a new representative for $c_k$ sitting in degree $(0,k)$, which leads to a cocycle in the Weil algebra as shown in Proposition~\ref{propweilclass}.

\begin{lemma}
Given a bicomplex $(C^{\bullet, \star}, d_{//}, (-1)^\bullet d_{\bot})$ which arises from a total complex with total differential $d = d_{//}+ d_{\bot}$, we consider a cocycle $\mathbf{a}$ of total degree $k$. Suppose the horizontal differential $d_{//}$ is acyclic on components $C^{\geq1, \star}$, then there exists a representative $\alpha$ of degree $(0, k)$ such that $\mathbf{a}-\alpha$ belongs to $d$-coboundaries. 
\label{transgr}
\end{lemma}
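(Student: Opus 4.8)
The statement is a standard ``zig-zag'' transgression argument for a bicomplex in which one of the two differentials is acyclic in the relevant range, and I would carry it out by downward induction on the horizontal degree. The plan is to decompose the cocycle $\mathbf{a}$ of total degree $k$ into bihomogeneous pieces $\mathbf{a} = \sum_{p+q=k} \mathbf{a}^{p,q}$ with $\mathbf{a}^{p,q}\in C^{p,q}$, and to peel off the top horizontal piece iteratively, modifying $\mathbf{a}$ by $d$-coboundaries, until only the $(0,k)$ component remains.

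First I would extract from the cocycle equation $d\mathbf{a}=0$ its bihomogeneous components. Writing $d = d_{//} + d_{\bot}$ with $d_{//}\colon C^{p,q}\to C^{p+1,q}$ and $(-1)^\bullet d_{\bot}\colon C^{p,q}\to C^{p,q+1}$, the component landing in $C^{p+1,q}$ of $d\mathbf{a}=0$ reads $d_{//}\mathbf{a}^{p,q} \pm d_{\bot}\mathbf{a}^{p+1,q-1}=0$. Let $p_0$ be the largest horizontal degree with $\mathbf{a}^{p_0,\,k-p_0}\neq 0$; then the top component of the cocycle equation gives $d_{//}\mathbf{a}^{p_0,\,k-p_0}=0$. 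If $p_0\geq 1$, acyclicity of $d_{//}$ on $C^{\geq 1,\star}$ produces $b\in C^{p_0-1,\,k-p_0}$ with $d_{//}b = \mathbf{a}^{p_0,\,k-p_0}$. Replacing $\mathbf{a}$ by $\mathbf{a}' := \mathbf{a} - db$ (a $d$-coboundary, hence changing $\mathbf{a}$ within its total cohomology class and by a $d$-coboundary as required) kills the $(p_0,k-p_0)$ component, since $db$ contributes $d_{//}b = \mathbf{a}^{p_0,k-p_0}$ in bidegree $(p_0,k-p_0)$ and a lower-horizontal-degree term $\pm d_{\bot}b$ in bidegree $(p_0-1,k-p_0+1)$. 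Crucially $\mathbf{a}'$ is again a $d$-cocycle (being $\mathbf{a}$ minus a coboundary) with strictly smaller top horizontal degree, so I can iterate. After finitely many steps the top horizontal degree drops to $0$, leaving $\alpha := \mathbf{a}^{(\text{final})}$ supported in bidegree $(0,k)$ with $\mathbf{a}-\alpha$ a sum of the $d$-coboundaries accumulated along the way, hence itself a $d$-coboundary. This $\alpha$ is the desired representative.

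I should also record that $\alpha$ is genuinely a representative of the class, i.e.\ it is itself $d$-closed: this is automatic since $\alpha = \mathbf{a} - (d\text{-coboundary})$ and $\mathbf{a}$ is a cocycle. One also notes the $(1,k-1)$-component of $d\alpha = 0$ forces $d_{//}\alpha^{0,k}=0$, consistent with $\alpha$ lying in the horizontal cohomology $H_{//}^{0,\star}$, which is exactly what is needed to view $\alpha$ as a cocycle in $(W,D)$ in the subsequent application.

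\textbf{Main obstacle.} The argument is formally routine; the only genuine care needed is bookkeeping of signs in the twisted bicomplex differential $(-1)^\bullet d_{\bot}$ and the verification that each modification $\mathbf{a}\mapsto \mathbf{a}-db$ truly removes the top component without reintroducing horizontal degree $p_0$ or higher — i.e.\ that $db$ has no component in bidegree $(p_0+1,\star)$, which holds because $b\in C^{p_0-1,\star}$ and $d$ raises either degree by exactly one. A secondary point to state carefully is finiteness of the induction: this uses that $\mathbf{a}$ has fixed total degree $k$, so only finitely many bidegrees $(p,q)$ with $p+q=k$ and $p\geq 0$ occur. Neither point is a real difficulty, but the signs are where a careless write-up would go wrong.
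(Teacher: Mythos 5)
Your proposal is correct and is essentially the paper's own argument: the paper likewise peels off the components by a staircase induction, producing primitives $b_{k-1,0},\dots,b_{0,k-1}$ with $d_{//}b$ cancelling the current top horizontal component shifted by $d_{\bot}$ of the previous primitive, and then sets $\alpha=\mathbf{a}_{0,k}-d_{\bot}b_{0,k-1}$ so that $\mathbf{a}-\alpha=d\bigl(\sum_i b_{i,k-1-i}\bigr)$. Your iterative subtraction of $db$ one step at a time unfolds to exactly this recursion, so the two write-ups differ only in packaging.
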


\begin{proof}
Without loss of generality, we can assume that $a$ has nontrivial components in degree $(p, k-p)$ for each $0\leq p\leq k$, which we denote by $\mathbf{a}_{0,k}, \mathbf{a}_{1,k-1},\cdots, \mathbf{a}_{k,0}$ respectively. So we have, for $0\leq p\leq k-1$
\[d_{//} \mathbf{a}_{k,0}=0,\quad d_{\bot} \mathbf{a}_{k-p,p}+d_{//} \mathbf{a}_{k-p-1,p+1} = 0,\quad d_{\bot} \mathbf{a}_{0,k} = 0.\]
By the acyclicity of $d_{//}$ on $C^{\geq1, \star}$, there exists elements $b_{0,k-1}, b_{1,k-2},\cdots, b_{k-1,0}$ such that for $0\leq p\leq k-2$
\[d_{//} b_{k-1,0}=\mathbf{a}_{k,0},\quad d_{//} b_{k-p-2,p+1} =\mathbf{a}_{k-p-1,p+1}-  d_{\bot} b_{k-p-1,p}.\]
That can be shown by an inductive method.
For $1\leq i<n$, suppose that there exists $b_{k-i-1, i}$ such that $\mathbf{a}_{k-i, i} = d_{//} b_{k-i-1, i}+d_{\bot} b_{k-i, i-1}$ holds.
When $i=n<k$, 
\[d_{//}(\mathbf{a}_{k-n,n}-d_{\bot} b_{k-n, n-1}) = -d_{\bot} \mathbf{a}_{k-n+1, n-1} +d_{\bot} d_{//}b_{k-n, n-1} =0\]
and so there exists $b_{k-n-1,n}$ such that 
\[d_{//}b_{k-n-1,n} = \mathbf{a}_{k-n,n}-d_{\bot} b_{k-n, n-1}.\]
Finally let $\alpha = \mathbf{a}_{0,k}-d_\bot b_{0,k-1}$, and check that
\[\sum_{i=0}^k \mathbf{a}_{i,k-i} - \alpha = d(\sum_{i=0}^{k-1} b_{i,k-1-i}).\]
\end{proof}

\begin{proposition}
Consider $\mathfrak g$ being the curved $L_\infty$ algebra arising from a geometric deformation on $\mathfrak h\oplus \mathfrak v$. Suppose $u_{0,k}$ is a total cocycle in $\Omega_{B\mathfrak g}$ concentrated in bi-degree $(0,k)$, then its $y^i$-independent part is a cocycle in the Weil algebra $(W,D)$.
\label{propweilclass}  
\end{proposition}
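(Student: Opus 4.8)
The plan is to exploit the homotopy retraction established in Proposition~\ref{propJets1} together with the commutativity of the retraction data with the total differential. Recall that $\iota$ and $p$ assemble a special deformation retract from $(\Omega_{B\mathfrak g},\delta+d^{\nabla}_{\geq1}+d^{\nabla'})$ onto $(W,0)$, and the induced differential on $W$ is $D=p\circ(d^\rho+d^\mu+d^{DR})\circ\iota$. The key structural input is that $\iota_0$ is the identity into bi-degree $(0,0)$, $p=p_0$ is the projection onto the $y^i$-independent part, and over the component $\Gamma(\widehat{{\rm Sym}}(T^\vee_M\oplus T^\vee_M[-1]\oplus V^\vee[-1]\oplus V^\vee[-2]))$ the perturbed embedding has the form $\iota(f)=f+(\text{terms at least linear in }y^i)$. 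First I would observe that $u_{0,k}$ being concentrated in bi-degree $(0,k)$ means it lies entirely in the image of $\iota_0$ restricted to that degree, hence $p(u_{0,k})$ is exactly its $y^i$-independent part, call it $\bar u$; but since $u_{0,k}$ has no $y^i$ dependence at all, $\iota(\bar u)$ and $u_{0,k}$ agree modulo terms at least linear in $y^i$, in fact $u_{0,k}=p_0(u_{0,k})$ viewed in $W$ and $\iota(u_{0,k})$ is its unique $(\delta+d^{\nabla}_{\geq1}+d^{\nabla'})$-flat lift.

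Second, I would use the hypothesis that $u_{0,k}$ is a total cocycle: $(\delta+d^{\nabla}_{\geq1}+d^{\nabla'}+d^\rho+d^\mu+d^{DR})u_{0,k}=0$. Since $u_{0,k}$ is concentrated in bi-degree $(0,k)$, it is automatically $\delta$-closed and $d^{\nabla}_{\geq1}$- and $d^{\nabla'}$-closed trivially at the level of $y$-degree~$0$ — more precisely the $(\delta+d^{\nabla}_{\geq1}+d^{\nabla'})$-part of the cocycle equation forces the $y$-linear corrections, so that $u_{0,k}$ is already $\iota(\bar u)$ up to the recursive tail, i.e.\ $\iota(\bar u)=u_{0,k}$ on the nose because a cocycle concentrated in $(0,k)$ that is $\delta$-closed needs no correction terms under the recursion defining $\iota$ (the recursion in Proposition~\ref{propJets2} adds terms only to cancel $(d^\nabla_1+d^{\nabla'}_1)$ applied to lower-order pieces, and here $u_{0,k}$ is a genuine flat section by the cocycle condition). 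Granting this, $D(\bar u)=p\circ(d^\rho+d^\mu+d^{DR})\circ\iota(\bar u)=p\big((d^\rho+d^\mu+d^{DR})u_{0,k}\big)=p\big(-(\delta+d^\nabla_{\geq1}+d^{\nabla'})u_{0,k}\big)$, and the right-hand side is killed by $p=p_0$ since $\delta$ and $d^\nabla_{\geq1}$ and $d^{\nabla'}$ all raise $y$-degree or leave the image in $\widehat{{\rm Sym}}^{\geq1}(T^\vee_M)$, hence have zero $y^i$-independent part. Therefore $D\bar u=0$, which is the claim.

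The step I expect to be the main obstacle is the precise justification that $\iota(\bar u)=u_{0,k}$, i.e.\ that a total cocycle concentrated in bi-degree $(0,k)$ is already a $(\delta+d^\nabla_{\geq1}+d^{\nabla'})$-flat section and not merely its projection. This requires unwinding the cocycle equation by $y$-degree: at $y$-degree $0$ one gets that the $(0,k)$-component is $\delta$-closed; at $y$-degree $1$ one gets $\delta(\text{terms of }y\text{-degree }1)+(d^\nabla_1+d^{\nabla'}_1)(u_{0,k})=0$, but since $u_{0,k}$ has no $y$-degree-$1$ part, one must argue $(d^\nabla_1+d^{\nabla'}_1)(u_{0,k})=0$ directly, which is not obvious and in fact is \emph{false} in general — rather the resolution is that $u_{0,k}$ need not be flat, but $\iota(\bar u)-u_{0,k}$ is $(\delta+d^\nabla_{\geq1}+d^{\nabla'})$-exact with the primitive supported in $y$-degree $\geq1$, so applying $p$ to $(d^\rho+d^\mu+d^{DR})$ of that exact correction still lands in $y$-degree $\geq1$ modulo the CE differential, hence has vanishing $y^i$-independent part. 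I would therefore phrase the argument entirely in terms of the retraction identities $p\circ(d^\nabla_{\geq1}+d^{\nabla'})=0$ on $y$-degree-$0$ outputs and $p\circ\delta=0$, sidestepping any claim that $u_{0,k}$ itself is flat: write $\iota(\bar u)=u_{0,k}+w$ with $w$ of $y$-degree $\geq1$, note $D\bar u=p(d_\bot u_{0,k})+p(d_\bot w)$, and show both terms vanish — the first because $d_\bot u_{0,k}=-d_{//}u_{0,k}$ has vanishing $p$-image by the retraction identities, the second by a degree count on how $d^\rho,d^\mu,d^{DR}$ interact with $y$-degree and the structure of $p_0$.
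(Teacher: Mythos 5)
There is a genuine gap, and it sits exactly where you predicted: the final ``degree count''. Your workaround writes $\iota(\bar u)=u_{0,k}+w$ with $w$ of $y$-degree $\geq1$ and claims $p(d_\bot w)=0$. This is false: both $d^{DR}$ and $d^\rho$ can lower the $y$-degree to zero without producing any $dx$-factor, so their output is \emph{not} killed by $p=p_0$. Concretely, $d^{DR}(y^j F)=Dy^j F\pm y^j d^{DR}F$ and $d^\rho$ contains $\rho^i{}_a v^a\frac{\partial}{\partial y^i}$, so a $y$-linear, $dx$-free term contributes a nonzero $y$-independent part after applying $d_\bot$. Indeed this is precisely the mechanism by which the Weil differential is nontrivial: the $y$-linear correction $-\Gamma'^a_j{}_b\,y^jv^b$ in $\iota(v^a)$ is what produces $D^{DR}(v^a)=Dv^a-\Gamma'^a_j{}_b\,Dy^jv^b$ (Remark \ref{weildr}). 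So $p(d_\bot w)$ does not vanish for a general $y$-degree-$\geq1$ element $w$; your argument only closes if $w=0$.

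The missing idea is the one you considered first and then rejected, and your reason for rejecting it rests on a misreading of the hypothesis. ``Concentrated in bi-degree $(0,k)$'' does not mean $y$-independent (otherwise the statement about ``its $y^i$-independent part'' would be vacuous, and the transgressed representatives do carry $y$-tails); it means that $d_{//}u_{0,k}$ and $d_\bot u_{0,k}$ land in different bi-degrees, so the total cocycle condition forces $d_{//}u_{0,k}=0$ \emph{and} $d_\bot u_{0,k}=0$ separately. Once $u_{0,k}$ is $d_{//}$-closed, the uniqueness statement of Proposition \ref{propJets2} gives $u_{0,k}=\iota(\bar u)$ on the nose, with $\bar u=p(u_{0,k})$ its $y^i$-independent part; no claim of the form $(d^\nabla_1+d^{\nabla'}_1)\bar u=0$ is needed, because the $y$-dependent tail of $u_{0,k}$ is exactly the recursive correction that absorbs those terms in the flatness equation. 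Then
\[
D\bar u \;=\; p\circ d_\bot\circ\iota(\bar u)\;=\;p\big(d_\bot u_{0,k}\big)\;=\;0,
\]
which is the paper's argument. In short: keep your first route, justify $\iota(\bar u)=u_{0,k}$ by the bi-degree splitting of the cocycle equation plus Proposition \ref{propJets2}, and drop the $w$-correction step, which cannot be repaired by degree considerations alone.
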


\begin{proof}
One checks that $u_{0,k}$ is both $d_{//}$-closed and $d_{\bot}$-closed. By Proposition~\ref{propJets1}, $y^i$-independent part sits in the image of the projection map $p: \Omega_{B\mathfrak g}\to W$. Since the Weil differential is induced by $d_{\bot}$ from $\Omega_{B\mathfrak g}$, it is a cocycle.
\end{proof}

At first glance, this result may not seem particularly intriguing. It has been established that the Weil algebra is quasi-isomorphic to the de Rham coochain $(\Omega_M,d)$ \cite{AAC11}, meaning the only nontrivial classes land in Chern characters of $T_M$. However, this finding holds significance as it serves as an initial step towards developing an ``equivariant" cohomology theory for Lie algebroids.  It becomes interesting to examine captivating coboundaries within this context. To streamline the discussion, we will refer to the transgressed elements by their corresponding images in $W$, without explicitly stating this fact.

Similarly, it is interesting to explore the component of the Atiyah class independent of the coordinates $\{y^i\}$, formally expressed as $At(\nabla_{B\mathfrak g})|_{y^i=0}$. This specific component (denoted by additional brackets compared to the constant component $At(\nabla_{B\mathfrak g})_0$) encodes representation-theoretic information analogous to character classes.  
 
\[At(\nabla_{B\mathfrak g})|_{y^i=0} = Dy^i\otimes \left(\begin{array}{cc}l_2(y_i,-) & \rho_{1+1}(y_i,-) \\0 & l'_2(y_i,-)\end{array}\right)+ Dv^a\otimes \left(\begin{array}{cc}\rho_{1+1}(-,v_a) & 0 \\l'_2(-,v_a) & \mu_2(v_a,-)\end{array}\right) \]
\[+Dy^i v^a\otimes \left(\begin{array}{cc}\rho_3(y_i, -, v_a) & 0 \\l'_3(y_i, -, v_a) & \mu_3(y_i,v_a,-)\end{array}\right)\]
\[+Dv^a v^b\otimes \left(\begin{array}{cc}0 & 0 \\ \mu_3(-,v_a, v_b) & 0\end{array}\right)
+Dy^i v^a v^b\otimes \left(\begin{array}{cc}0 & 0 \\ \frac12\mu_4(y_i,-,v_a, v_b) & 0\end{array}\right)\]

\[ =Dy^i\otimes \left(\begin{array}{cc}\frac13 dx^m \left( R_{im}\,_k\,^j +R_{km}\,_i\,^j \right)  & \rho_{1+1}(y_i,-) \\0 & \frac12 dx^mR'_{im}\end{array}\right)- Dv^a\otimes \left(\begin{array}{cc}\check{\nabla}^{bas}_{v_a} - \nabla_{\rho(v_a)} & 0 \\-l'_2(-,v_a) & \hat{\nabla}^{bas}_{v_a} - \nabla'_{\rho(v_a)}\end{array}\right) \]
\[+Dy^i v^a\otimes \left(\begin{array}{cc}\rho_3(y_i, -, v_a) & 0 \\l'_3(y_i, -, v_a) & \mu_3(y_i,v_a,-)\end{array}\right)\]
\[+Dy^i v^a v^b\otimes \left(\begin{array}{cc}0 & 0 \\ \frac12\mu_4(y_i,-,v_a, v_b) & 0\end{array}\right).\]

\subsection{Modular class and the first Atiyah-Chern class}
\label{ss:modular}

Recall the notion of basic connections associated to the direct sum of a Lie algebroid $V$ and the shifted tangent bundle $T_M[-1]$ from \cite{AAC}:

\[\hat{\nabla}^{bas}_\alpha \beta = \nabla'_{\rho(\alpha)}\beta-\mu(\alpha, \beta) = 
[\alpha,\beta]+\nabla'_{\rho(\beta)}\alpha\]
and
\[\check{\nabla}^{bas}_\alpha X = \nabla_{\rho(\alpha)}X- \nabla_X\rho (\alpha) +\rho\nabla'_X \alpha 
=-\rho_{1+1}(X,\alpha)+\nabla_{\rho(\alpha)}X
\]
for all $X\in \Gamma(T_M)$ and $\alpha,\beta\in \Gamma(V)$.
So
\[-\mu(\alpha, \beta) = \hat{\nabla}^{bas}_\alpha \beta- \nabla'_{\rho(\alpha)}\beta,\]
\[ -\rho_{1+1}(X,\alpha)= \check{\nabla}^{bas}_\alpha X - \nabla_{\rho(\alpha)}X.\]

By \cite{Crainic00, CF}, there is a Chern-Weil type of construction for secondary character classes from $\hat{\nabla}^{bas}_\alpha \beta- \nabla'_{\rho(\alpha)}\beta$ and $\check{\nabla}^{bas}_\alpha X - \nabla_{\rho(\alpha)}X$ via Chern-Simons transgression. 
\begin{proposition}
When $(\nabla, \nabla')$ is a metric connection on $T_M[-1]\oplus V$, the expression
\[{\rm STr}\left( \hat{\nabla}^{bas}\oplus  \check{\nabla}^{bas} - \nabla'_{\rho(-)}\oplus\nabla_{\rho(-)}\right) \]
computes the secondary character class $u_{1}$ in the Lie algebroid cohomology $H^{1}(V)$.
\label{CSform1}
\end{proposition}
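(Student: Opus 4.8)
The plan is to recognize the expression $\mathrm{STr}\bigl(\hat\nabla^{bas}\oplus\check\nabla^{bas} - \nabla'_{\rho(-)}\oplus\nabla_{\rho(-)}\bigr)$ as the degree-$1$ Chern--Simons transgression form built from the pair of $V$-connections $D_0 := \nabla'_{\rho(-)}\oplus\nabla_{\rho(-)}$ and $D_1 := \hat\nabla^{bas}\oplus\check\nabla^{bas}$ on the graded bundle $T_M[-1]\oplus V$, and then to invoke the standard secondary characteristic class machinery of \cite{Crainic00, CF}. Recall that for two $V$-connections $D_0, D_1$ on a vector bundle $E$, the difference $\theta := D_1 - D_0$ is a $C^\infty_M$-linear bundle map $V\to\mathrm{End}(E)$, i.e. an element of $\Gamma(V^\vee)\otimes\mathrm{End}(E)$, and $\mathrm{Tr}(\theta)\in\Gamma(V^\vee)$ is the degree-$1$ transgression. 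Here $E = T_M[-1]\oplus V$ is graded, so one uses the super-trace, and $\theta = D_1 - D_0$ is precisely $\hat\nabla^{bas}\oplus\check\nabla^{bas} - \nabla'_{\rho(-)}\oplus\nabla_{\rho(-)}$, whose components are $-\mu(\alpha,-)$ on the $V$-summand and $-\rho_{1+1}(-,\alpha)$ on the $T_M[-1]$-summand by the two displayed identities just above the statement.

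First I would verify that $D_0$ and $D_1$ are both genuine $V$-connections (i.e. $V$-superconnections of Lie-algebroid-representation type) on $T_M[-1]\oplus V$: $D_1 = \hat\nabla^{bas}\oplus\check\nabla^{bas}$ is the basic connection, which is a $V$-connection by construction in \cite{AAC}; $D_0 = \nabla'_{\rho(-)}\oplus\nabla_{\rho(-)}$ is the $V$-connection pulled back along the anchor $\rho$ from the ordinary connections $\nabla', \nabla$. Second, I would recall that for a Lie algebroid $V$, given two $V$-connections on the same bundle whose difference is $C^\infty_M$-linear, the $1$-form $\mathrm{STr}(\theta)\in\Omega^1(V) = \Gamma(V^\vee)$ is automatically $d_V$-closed: this is the classical computation that the derivative of the Chern--Simons form $\mathrm{STr}(\theta)$ equals $\mathrm{STr}(R_{D_1}) - \mathrm{STr}(R_{D_0})$ where $R_{D_i}$ are the $V$-curvatures, and both curvature traces vanish in degree-$1$ Chern--Weil theory (the first Chern form of a $V$-connection is exact, or here, using that both connections descend appropriately). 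Alternatively, and more in the spirit of this paper, I would observe that by the previous subsection the super-trace of the $(p,q)=(0,1)$-component of $At(\nabla_{B\mathfrak g})$ is, up to sign and transgression, exactly $\mathrm{STr}(\theta)$ read off from the formula for $At(\nabla_{B\mathfrak g})|_{y^i=0}$; the Atiyah class is $d^{\mathfrak g}$-closed by the Lemma in Section~\ref{atiyah}, and restricting the $d^{\mathfrak g}$-closedness to the relevant component, together with the metric hypothesis which kills the symmetric part, yields $d_V$-closedness of $\mathrm{STr}(\theta)$.

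Third, I would identify the resulting class with $u_1$: by definition in \cite{Crainic00, CF}, the first secondary characteristic class $u_1 \in H^1(V)$ of the representation $T_M[-1]\oplus V$ (equivalently, the pair consisting of the adjoint/basic connection against a metric-compatible ordinary connection) is represented precisely by the Chern--Simons transgression $\mathrm{STr}(D_1 - D_0)$ when $D_0$ comes from a metric connection; the metric hypothesis on $(\nabla,\nabla')$ is exactly what is needed for $D_0$ to be metric-compatible so that its $V$-curvature is skew and its even Chern forms can serve as the anchoring point of the transgression. One should also note that the $T_M[-1]$-summand contributes with a sign opposite to the $V$-summand under the super-trace, which matches the standard definition of $u_1$ as $\mathrm{Tr}$ on $V$ minus $\mathrm{Tr}$ on $T_M$, i.e. the modular-class combination.

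The main obstacle I anticipate is not any single computation but rather bookkeeping of conventions: matching the sign and normalization in the paper's super-trace definition of $c_k$ and in the displayed formula for $At(\nabla_{B\mathfrak g})|_{y^i=0}$ against the sign conventions for the Chern--Simons form and for $u_1$ in \cite{Crainic00, CF, AAC}, and checking carefully that the ``metric connection on $T_M[-1]\oplus V$'' hypothesis is used in exactly the right place (namely to make $\mathrm{STr}(R_{D_0})$ vanish in the relevant degree, equivalently to place the transgression in the correct cohomology class rather than merely a closed form). A secondary subtlety is confirming that $D_0$ and $D_1$ really have the same ``symbol'' (anchor part) so that their difference is tensorial — this follows because both restrict to $\nabla_{\rho(\alpha)}$ modulo the tensorial terms $\mu$ and $\rho_{1+1}$, which is precisely the content of the two identities displayed immediately before the statement.
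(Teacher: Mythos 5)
Your proposal is correct and takes essentially the same route as the paper, which gives no separate proof but justifies the proposition by invoking the Chern--Simons transgression construction of secondary characteristic classes from \cite{Crainic00, CF} applied to the pair of $V$-connections $\hat{\nabla}^{bas}\oplus\check{\nabla}^{bas}$ and $\nabla'_{\rho(-)}\oplus\nabla_{\rho(-)}$ on $T_M[-1]\oplus V$. Your checks of tensoriality of the difference, $d_V$-closedness via the vanishing supertraces of the two curvatures, and the role of the metric hypothesis simply fill in the details the paper delegates to those references.
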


The (linear) connection $\hat{\nabla}^{bas}\oplus  \check{\nabla}^{bas}$ is equivalent to a nonlinear flat connection on the superbundle $(V\oplus T_M[-1],\rho)$ up to homotopy. The flatness is guaranteed by the ``bracket" parts of $\hat{\nabla}^{bas}$ and $\check{\nabla}^{bas}$, while the compatibility with respect to the anchor map $\rho$ makes sure the two components form a connection for $(V\oplus T_M,\rho)$. 

It is interesting to see whether higher secondary character classes $u_{\frac{k+1}2}$ as defined in \cite{CF, Fernandes} can be recovered by the Atiyah class.

Consider now the transgression of $c_1$. We start with the $(1,0)$-component of $c_1$, and apply the homotopy contraction\footnote{Strictly speaking, the homotopy contraction for $d_{//}\equiv \delta+d^\nabla_{\geq1}+d^{\nabla'}_{\geq1}$ is $h$ as given in Proposition \ref{propJets1}, which includes additional linear operators with higher $^\mathfrak h$weights. However, for the leading weight computation, these corrections are considered irrelevant.} $\delta^{-1}$, which gives
\[ \delta^{-1} Dy^i dx^m {\rm STr} \left(\begin{array}{cc}\frac13  \left( R_{im}\,_k\,^j +R_{km}\,_i\,^j \right)  & \rho_{1+1}(y_i,-) \\0 & \frac12 R'_{im}\end{array}\right) \]
\[= - y^m Dy^i  {\rm STr} \left(\begin{array}{cc}\frac13  \left( R_{im}\,_k\,^j +R_{km}\,_i\,^j \right)  & 0 \\0 & \frac12 R'_{im}\end{array}\right).\]
Upon applying $d_\bot$, the above expression
\begin{eqnarray*}&\mapsto& (- Dy^m-\rho^m\,_a v^a) \wedge Dy^i  {\rm STr} \left(\begin{array}{cc}\frac13  \left( R_{im}\,_k\,^j +R_{km}\,_i\,^j \right)  & 0 \\0 & \frac12 R'_{im}\end{array}\right)\\
&=& ( Dy^i\wedge Dy^m+ \rho^m\,_a Dy^i v^a)   {\rm STr} \left(\begin{array}{cc}\frac13  \left( R_{im}\,_k\,^j +R_{km}\,_i\,^j \right)  & 0 \\0 & \frac12 R'_{im}\end{array}\right)\\
&=& Dy^i\wedge Dy^m  \left(\frac13  R_{jm}\,_i\,^j  \right)+ \rho^m\,_a Dy^i v^a   \left(\frac13  R_{jm}\,_i\,^j \right).
\end{eqnarray*}
The first term vanishes, since
\[R_{j[mi]}\,^j =\frac12( -R_{mij}\,^j - R_{ijm}\,^j)-\frac12R_{jim}\,^j =-\frac12 R_{ijm}\,^j-\frac12R_{jim}\,^j=0.\]

By Lemma~\ref{transgr}, the new representative for the first Atiyah-Chern class is 
\begin{eqnarray*}&&- \frac13 \rho^m\,_a Dy^i v^a R_{jm}\,_i\,^j + Dv^a{\rm STr} \left(\begin{array}{cc}\rho_{1+1}(-,v_a) & 0 \\0& \mu_2(v_a,-)\end{array}\right) \\
&&+Dy^i v^a{\rm STr} \left(\begin{array}{cc}\rho_3(y_i, -, v_a) & 0 \\0 & \mu_3(y_i,v_a,-)\end{array}\right).
\end{eqnarray*}

Let $C^*(V)$ denote the Chevalley-Eilenberg cochain for Lie algebroid $V$, which may also be regarded as a subspace inside $W$, depending on the context.
\begin{proposition}
If $\nabla'$ is a metric connection on $V$, the first Atiyah-Chern class transgresses to the image of modular class \cite{ELW}
\[{\rm STr} \left(\begin{array}{cc}  -(\check{\nabla}^{bas} - \nabla_{\rho}) & 0 \\0 &  -(\hat{\nabla}^{bas} - \nabla'_{\rho})\end{array}\right)\in C^1(V)\]
under the de Rham map $D^{DR}$ in Weil algebra for Lie algebroid $V$.
\label{modularc1}
\end{proposition}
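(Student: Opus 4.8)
The plan is to compare, monomial by monomial in a local frame, the degree $(0,1)$ representative of $c_1$ just produced by Lemma~\ref{transgr} with $D^{DR}$ applied to the modular cocycle $m = m_a v^a \in C^1(V) \subset W$, where $m_a$ denotes the supertrace in the statement evaluated on the basis element $v_a$. By Remark~\ref{weildr}, $D^{DR}(v^a) = Dv^a - \Gamma'^a_j{}_b\, Dy^j v^b$, so $D^{DR}(m_a v^a)$ has $Dv^a$-coefficient $m_a$ and $Dy^i v^a$-coefficient the covariant differential $\bar\nabla_i m_a$ of the density $m_a$, with $\bar\nabla$ the connection induced by $\nabla$ and $\nabla'$. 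Since the transgressed representative of $c_1$ is itself concentrated in the monomial types $Dv^a$ and $Dy^i v^a$, it suffices to check these two coefficient identities inside $W$.

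The $Dv^a$-identity is immediate from the basic-connection dictionary of Section~\ref{ss:modular}: one has $\rho_{1+1}(-,v_a) = -(\check\nabla^{bas}_{v_a} - \nabla_{\rho(v_a)})$ and $\mu_2(v_a,-) = \mu(v_a,-) = -(\hat\nabla^{bas}_{v_a} - \nabla'_{\rho(v_a)})$, so the supertrace in the $Dv^a$-term of the transgressed representative is literally $m_a$. This reproduces the $Dv^a$-part of $D^{DR}(m_a v^a)$ and already makes transparent why the first Atiyah-Chern class is a $D^{DR}$-coboundary: up to the covariant correction forced by the transformation law of $v^a$, the degree $(0,1)$ representative is $D^{DR}$ of the modular density sitting in $C^1(V)$.

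The $Dy^i v^a$-identity is the substantive step. Here one inserts the $\delta^{-1}$-gauge expressions for the higher brackets: for $\mu_3 = \mu_{2+1}$ the formula from Section~\ref{algoid}, $\mu_{2+1}(X,\alpha,\beta) = -\big({\nabla'}_X^{(1)}\mu\big)(\alpha,\beta) + \frac12 R_{\nabla'}(X,\rho(\alpha))(\beta) - \frac12 R_{\nabla'}(X,\rho(\beta))(\alpha)$, and for $\rho_3 = \rho_{1+2}$ the analogous expression obtained from the recursion of Proposition~\ref{propSol} and written out in Appendix~\ref{appB}, which likewise splits into a covariant-derivative-of-$\rho$ part and $R_\nabla$-, $R_{\nabla'}$-correction parts. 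Upon taking supertraces, every correction term is a trace of $R_{\nabla'}(\cdot,\cdot)$ or $R_\nabla(\cdot,\cdot)$; because $\nabla'$ is metric (and $\nabla$ is Levi-Civita), these curvatures take values in skew-adjoint endomorphisms and are therefore traceless, so all the correction terms drop out --- this is the only place metricity of $\nabla'$ is used. The remaining $R_\nabla$-contractions, including the Ricci-type term concealed in the $-\frac13 \rho^m{}_a R_{jmi}{}^j$ summand, recombine with the curvature that appears when $\bar\nabla_i$ differentiates the ${\rm tr}_{T_M}\nabla_{\rho(v_a)}$ piece of $m_a$ (here one uses the identity $R_{j[mi]}{}^j = 0$ established above). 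What survives on both sides are the traces of $-\big({\nabla'}^{(1)}\mu\big)$ and of the covariant derivative of $\rho_{1+1}$; applying the basic-curvature identity $\mu^a_{jbc} = -(R^{bas}(v_b,v_c)y_j)^a + \frac12 R'_{ijb}{}^a \rho_c{}^i - \frac12 R'_{ijc}{}^a \rho_b{}^i$ exactly as in Section~\ref{weilalgsubsec}, these reorganize into $\bar\nabla_i m_a$, which matches the $Dy^i v^a$-coefficient $\partial_i m_a - m_b\Gamma'^b_{ia}$ of $D^{DR}(m_a v^a)$. This completes the proof.

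The main obstacle is precisely this last reorganization: one must carry the traces of $\rho_{1+2}$ and $\mu_{2+1}$ through their $\delta^{-1}$-gauge formulas, confirm term by term that every curvature contraction that occurs is traceless by metricity, and verify that the leftover covariant-derivative terms --- together with the Christoffel contributions coming from $D^{DR}(v^a)$ and from differentiating the trace densities --- assemble exactly into $\bar\nabla_i m_a$. This is a finite, explicit local computation with no conceptual surprises once metricity and the basic-curvature identity are invoked; it is the only non-formal ingredient, the rest being bookkeeping of monomial types and the dictionary between the lower brackets and the basic connections.
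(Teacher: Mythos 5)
Your proposal follows essentially the same route as the paper's proof: compute $D^{DR}$ of the modular cocycle using Remark \ref{weildr}, match the $Dv^a$-coefficient via the basic-connection dictionary, and match the $Dy^jv^a$-coefficient by inserting the $\delta^{-1}$-gauge formulas for $\rho_3$ and $\mu_3$, which is exactly what equations (\ref{eqncompmod}) and (\ref{compmodeqn2}) carry out. One small caution: not every curvature correction is killed by metricity alone --- the mixed contractions of type $\rho^i{}_b R'_{ika}{}^b$ cancel only between the $\rho_3$- and $\mu_3$-blocks of the supertrace, and the Ricci-type contraction survives to match the explicit $-\tfrac13\rho^m{}_a R_{jmi}{}^j$ term of the transgressed representative rather than dropping out.
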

\begin{proof}
It remains to compute
\[ D^{DR} \left(v^a\cdot {\rm STr} \left(\begin{array}{cc}\rho_{1+1}(-,v_a) & 0 \\0& \mu_2(v_a,-)\end{array}\right)\right)\] 
and compare with expression given above.

According to the local form of $D^{DR}$ given in Remark \ref{weildr}, it is given by
\[ (D v^a- \Gamma'^a_j\,_b Dy^j v^b)\otimes \cdot {\rm STr} \left(\begin{array}{cc}\rho_{1+1}(-,v_a) & 0 \\0& \mu_2(v_a,-)\end{array}\right)\]
\[-  Dy^j v^a{\rm STr}\left(\begin{array}{cc}\nabla_j\rho_{1+1}(-,v_a) - \rho_{1+1}(\nabla_j(-),v_a) & 0 \\0& \nabla'_j\mu_2(v_a,-)-\mu_2(v_a,\nabla_j'(-))\end{array}\right).\]

Note that 
\begin{eqnarray*}
&&- \Gamma'^a_j\,_b Dy^j v^b\otimes {\rm STr} \left(\begin{array}{cc}\rho_{1+1}(-,v_a) & 0 \\0& \mu_2(v_a,-)\end{array}\right) \\
&=&Dy^j v^a\otimes {\rm STr} \left(\begin{array}{cc}\rho_{1+1}(-,\nabla'_jv_a) & 0 \\0& \mu_2(\nabla'_jv_a,-)\end{array}\right) .
\end{eqnarray*}

So in total we have 
\begin{eqnarray}
&&D v^a\otimes {\rm STr} \left(\begin{array}{cc}\rho_{1+1}(-,v_a) & 0 \\0& \mu_2(v_a,-)\end{array}\right)\nonumber\\
&& - Dy^j v^a {\rm STr}\left(\begin{array}{cc}\nabla_j(\rho_{1+1})(-,v_a) & 0 \\0& \nabla'_j(\mu_{2})(-,v_a)\end{array}\right),
\label{eqncompmod}
\end{eqnarray}
where we have used $\nabla_j(\rho_{1+1})(-,v_a)$ to denote 
$$\nabla_j\rho_{1+1}(-,v_a) - \rho_{1+1}(\nabla_j(-),v_a)-\rho_{1+1}(-,\nabla_j'v_a),$$ and likewise $\nabla'_j(\mu_{2})(-,v_a)$ to denote $$\nabla_j'\mu_2(v_a,-)-\mu_2(v_a,\nabla'_j(-))-\mu_2(\nabla_j'v_a,-).$$

The second line in Equation (\ref{eqncompmod}) gives\footnote{For more details please refer to Equation (\ref{compmodeqn2}) in Appendix \ref{appB}.}:
\[   Dy^kv^a \left(  {\rm Tr} \rho_3(-, y_k, v_a) - {\rm Tr} \mu_3(y_k, v_a, -) \right)-\frac13  Dy^kv^a \rho^ i\,_a R_{jik}\,^j .\]
The proposition follows.

\end{proof}

The first Atiyah-Chern class can be seen as a coboundary in the context of $W$, which is not surprising. As the Weil algebra $W$ is quasi-isomorphic to the de Rham cochain over $M$ \cite{AAC11}, the modular class, being a geometric invariant, is unlikely to persist in ordinary cohomology.

It is possible to recognise more Atiyah-Chern classes due to the acyclicity of $d^{DR}$ in $\Omega_{B\mathfrak g}$, which might allow us to relate those classes to more secondary invariants of Lie algebroids in $C^*(V)\subset W$ as defined in \cite{Crainic00, CF}. Nonetheless, given the absence of a clear equivariant de Rham algebra construction in the current scenario, it is uncertain whether such identifications can be established at this stage.

\subsection{Action algebroids}

We consider the action Lie algebroid coming from a finite dimensional Lie group action $G\to {\rm Diff}(M)$. In that case, the associated action Lie algebroid $V\equiv M\times {\rm Lie}(G)$ is trivial, whose anchor map is determined by the infinitesimal Lie algebra action ${\rm Lie}(G)\to \mathfrak X(M)$. It is possible to choose the trivial connection on $V$ such that the operations $\{l'_{n}\}_{n\geq2}$ encoding bundle structure vanish by induction.
Moreover, the torsion map $\mu$ is the $C^\infty$-linear extension of the Lie algebra structure constant on ${\rm Lie}(G)$ up to a sign, hence all the higher brackets $\{\mu_{2+n}\}_{n\geq1}$ vanish.
As a result,
\[At(\nabla_{B\mathfrak g})|_{y^i=0}  =Dy^i\otimes \left(\begin{array}{cc}\frac13 dx^m \left( R_{im}\,_k\,^j +R_{km}\,_i\,^j \right)  & \rho_{1+1}(y_i,-) \\0 & 0\end{array}\right)\]
\[- Dv^a\otimes \left(\begin{array}{cc}\check{\nabla}^{bas}_{v_a} - \nabla_{\rho(v_a)} & 0 \\0 & \hat{\nabla}^{bas}_{v_a} - \nabla'_{\rho(v_a)}\end{array}\right) \]
\[+Dy^i v^a\otimes \left(\begin{array}{cc}\rho_3(y_i, -, v_a) & 0 \\0 & 0\end{array}\right).\]
The component $-Dv^a\otimes \left(\check{\nabla}^{bas}_{v_a} - \nabla_{\rho(v_a)}\right)$ is precisely the moment map for the infinitesimal ${\rm Lie}(G)$-action on bundle $T_M$, which was denoted by $\mathcal L^{ T_M}(\alpha)-\nabla_{\rho(\alpha)}$ for any $\alpha\in {\rm Lie}(G)$ in \cite{BGV}, where $\mathcal L^{ T_M}$ is the Lie derivative on $T_M$. 

\begin{proposition}
The $k$-th Atiyah-Chern class of the action Lie algebroid has its constant part given by 
\[(-1)^k{\rm STr} \left(\begin{array}{cc}  R(-)+\mathcal L^{ T_M}_{(-)}(\alpha)-\nabla_{\rho(\alpha)}(-) & 0 \\0 &  [\alpha,-]\end{array}\right)^k,\forall \alpha\in {\rm Lie}(G)\]
in the Cartan sub-algebra $\left({\rm Sym}^*({\rm Lie}(G)^\vee)\otimes \Omega_M\right)^{{\rm Lie}(G)}$ of the BRST complex as in \cite{Kalkman}.
\label{equivarKalk}
\end{proposition}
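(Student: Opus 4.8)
The plan is to compute the constant part of $c_k = {\rm STr}\big(At(\nabla_{B\mathfrak g})^{\wedge k}\big)$ for the action Lie algebroid directly from the expression for $At(\nabla_{B\mathfrak g})|_{y^i=0}$ displayed just above, and then identify the resulting endomorphism-valued form with the equivariant curvature appearing in the Cartan model. First I would recall that the ``constant part'' of an Atiyah-Chern class means the part that survives after setting $y^i = 0$ in $\Omega_{B\mathfrak g}$ and, as in Proposition~\ref{propweilclass}, descends to the Weil algebra $W$; concretely, this amounts to keeping only the bi-degrees built from $Dy^i$ and $v^a$ with no $y^i$ factors. From the displayed formula for $At(\nabla_{B\mathfrak g})|_{y^i=0}$ in the action-algebroid case, the terms with a $v^a v^b$ or $Dy^i v^a v^b$ factor are absent (since $\mu_{2+n}=0$ for $n\geq1$ and $\mu_2$ is constant with value $-[\alpha,\beta]^{\mathfrak v}$, which contributes only the lower-triangular block, already killed by the super-trace in odd products), so the endomorphism-valued one-form reduces to the block-diagonal and block-off-diagonal pieces I need.

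Next I would assemble the entries. On the $T_M$ block, $Dy^i\otimes \tfrac13 dx^m(R_{im}{}_k{}^j + R_{km}{}_i{}^j)$ is the ordinary curvature $R$ of $\nabla$ (up to the standard symmetrization that the $\delta^{-1}$-gauge representative carries, which is cohomologically the curvature), and $-Dv^a\otimes(\check\nabla^{bas}_{v_a}-\nabla_{\rho(v_a)})$ is, as noted in the text right after the display, the moment map $\mathcal L^{T_M}_{(-)}(\alpha)-\nabla_{\rho(\alpha)}(-)$ for the infinitesimal $\mathfrak v$-action on $T_M$, in the notation of \cite{BGV}. On the $V=M\times\mathfrak v$ block, flatness of $V$ makes the $\nabla'$-curvature vanish, and $-Dv^a\otimes(\hat\nabla^{bas}_{v_a}-\nabla'_{\rho(v_a)})$ becomes $\mu_2(v_a,-)=-[\alpha,-]^{\mathfrak v}$ up to sign, giving the $[\alpha,-]$ entry. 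Collecting these, $At(\nabla_{B\mathfrak g})|_{y^i=0}$ with all $y^i$ set to zero is exactly the $2\times 2$ block-diagonal matrix $\mathrm{diag}\big(R(-)+\mathcal L^{T_M}_{(-)}(\alpha)-\nabla_{\rho(\alpha)}(-),\ [\alpha,-]\big)$, where now $Dy^i$ plays the role of the $\Omega_M$-form generator and $v^a$ the role of the $\mathfrak v^\vee$-generator in the Cartan/BRST bigraded algebra $\big(S^*(\mathfrak v^\vee)\otimes\Omega_M\big)^{\mathfrak v}$ of \cite{Kalkman}; I would spell out this dictionary between the $\delta^{-1}$-gauge generators of $W$ and the Cartan-model generators, noting the $\mathfrak v$-invariance is automatic from the contraction with $v^a$'s and the $G$-equivariance of the connections. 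Taking the $k$-th wedge power and the super-trace then gives ${\rm STr}\big(\cdots\big)^{\wedge k}$, and the overall $(-1)^k$ comes from the explicit minus sign on the $Dv^a$ terms (equivalently from the sign convention relating $At$ to the curvature $-[d^{\mathfrak g},\nabla_{B\mathfrak g}]$), which I would track carefully since it is the only sign subtlety.

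The main obstacle I anticipate is not the computation of the blocks, which is essentially a rewriting of the already-displayed $At(\nabla_{B\mathfrak g})|_{y^i=0}$, but rather making the identification with the Cartan sub-algebra precise: one must check that the transgressed $(0,k)$-representative of $c_k$ (obtained via Lemma~\ref{transgr}), restricted to its $y^i$-independent part, really lies in the $\mathfrak v$-invariant subspace $\big(S^*(\mathfrak v^\vee)\otimes\Omega_M\big)^{\mathfrak v}$ and that the induced differential $D$ on $W$ restricts there to the Cartan differential of \cite{Kalkman}. For that I would invoke Remark~\ref{weildr} and the structure of $D = D^{DR}+D^{\mu+\rho}$: on the action algebroid $D^{\mu+\rho}$ encodes the contraction and Lie-derivative operations of the BRST model, while $D^{DR}$ supplies the de Rham piece, so the match is forced once the generators are aligned. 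I would then conclude that the leading-order term of $c_k$ is the $k$-th equivariant Chern character (up to normalization and the $(-1)^k$), i.e.\ $(-1)^k{\rm STr}\big(R(-)+\mathcal L^{T_M}_{(-)}(\alpha)-\nabla_{\rho(\alpha)}(-)\ \oplus\ [\alpha,-]\big)^{k}$, as claimed; a brief sanity check against the known case $\rho=0$ (where the $T_M$ block degenerates to plain curvature and one recovers ordinary Chern characters plus the adjoint-action term) would close the argument.
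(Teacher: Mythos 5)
Your overall strategy is the same as the paper's: specialize $At(\nabla_{B\mathfrak g})|_{y^i=0}$ to the action algebroid, keep only the leading order in powers of $y^i$ and $v^a$ so that the transgression of Lemma~\ref{transgr} only involves $d^{DR}$, and then recognize the result as the Chern--Weil expression for the equivariant Chern character of $T_M\ominus V$ in the Cartan model. However, there are two concrete problems in your write-up. First, your dictionary with the Cartan algebra is wrong: you assert that ``$v^a$ plays the role of the $\mathfrak v^\vee$-generator'' of $S^*(\mathfrak v^\vee)$, but the degree-$2$ Cartan variables correspond to the generators $Dv^a$ (the $w^a$'s of $W$), while the degree-$1$ generators $v^a$ are precisely the BRST/ghost variables that must be absent from the Cartan subalgebra. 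The ``constant part'' in the statement is the component containing no $y^i$'s and no $v^a$'s, i.e.\ built only from $Dy^i$ (de Rham directions) and $Dv^a$. With your stated dictionary you would wrongly count the $v^a$-linear corrections that genuinely appear in the transgressed representative (the $\rho_3$-type term and the $\rho^m{}_a\,Dy^i v^a$ term produced by $d^\rho$ in the transgression, cf.\ the $c_1$ computation in Section~\ref{ss:modular}) as part of the constant term; they are exactly the subleading pieces that must be discarded, and your remark that ``$\mathfrak v$-invariance is automatic from the contraction with $v^a$'s'' reflects the same confusion. You did identify the moment map correctly as the coefficient of $Dv^a$ earlier, so this is an internal inconsistency, but as written the identification with $\left(S^*(\mathfrak v^\vee)\otimes\Omega_M\right)^{\mathfrak v}$ does not go through.

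Second, the sign analysis is incomplete in a way that would change the answer. The factor $(-1)^k$ is equivalent to an overall sign on every entry of the leading matrix, so it is not enough that the $Dv^a$-terms carry an explicit minus: the curvature factor appears in $At|_{y^i=0}$ as the mixed bidegree term $\frac13\,dx^m\left(R_{im}{}_k{}^j+R_{km}{}_i{}^j\right)Dy^i$ with a plus sign, and it only acquires the compensating minus (and only becomes the honest curvature two-form in the $Dy$'s, via the first Bianchi identity after antisymmetrization) through the transgression steps, where $\delta^{-1}$ sends $dx^m\mapsto -y^m$ and the new representative is $\mathbf a_{0,k}-d_\bot b_{0,k-1}$. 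Without tracking this, a mixed product of $p$ curvature factors and $k-p$ moment factors would come out with the sign $(-1)^{k-p}$ rather than $(-1)^k$, and the claimed formula $(-1)^k{\rm STr}(\,\cdot\,)^k$ would fail for the cross terms. So your assertion that the matrix at $y^i=0$ is ``exactly'' $\mathrm{diag}\big(R+\mathcal L^{T_M}(\alpha)-\nabla_{\rho(\alpha)},\,[\alpha,-]\big)$ is premature: that identity only holds for the transgressed $(0,k)$-representative, and the transgression is where both the sign and the conversion of the symmetrized $\delta^{-1}$-gauge representative into the genuine curvature take place.
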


The proof is obtained through a direct computation, focusing solely on the leading order contributions in powers of $v^a$'s and $y^i$'s, which greatly simplifies the transgression. In this case, only $d_\bot = d^{DR}$ needs to be taken into account, and the contributions reside in ${\rm Sym}^*({\rm Lie}(G)^\vee)\otimes \Omega_M$. Remarkably, the result aligns with the equivariant Chern character of $T_M\ominus V$ using the Chern-Weil construction, leading to an immediate conclusion. Consequently, the Atiyah-Chern classes of the geometrically deformed $L_\infty$ algebra $\mathfrak h\oplus \Omega_M\otimes {\rm Lie}(G)$ serve as a generalization of the equivariant Chern character for the action Lie algebroid case. It would be intriguing to provide a comprehensive equivariant description of the de Rham algebra $\Omega_{B(\mathfrak h\oplus \mathfrak v)}$ for the general Lie algebroid situation.

\section{$L_\infty$ algebras in field theories}
\label{aksz}

In this section we show the possible applications of our probe in the areas of quantum field theory.

\subsection{Admissible pairings and BV field theories}

\begin{definition}
An admissible pairing of degree $k-2$ on a curved $L_\infty$ algebra $\mathfrak g$ over $(A,d_A)$ is given by a graded skew-symmetric pairing of degree $k$\footnote{Pairings of degree $k$ can be understood as maps of degree $k$ between dg modules. One such map is described either as a degree $0$ map from the source, say $(M_1,d_1)$, to the shifted target $(M_2[k], d_2)$, or as a degree $k$ map to the unshifted target $(M_2, d_2)$. Within this paper we shall use the latter description, unless otherwise noted.} on $\mathfrak g[1]$:
\[\langle-,-\rangle: \mathfrak g[1]\otimes\mathfrak g[1]\to A\]
such that for every $l_n$ operation and for every $(n+1)$-tuple of homogeneous elements  $v_1,\cdots, v_{n+1}$ in $\mathfrak g[1]$, the multilinear map (of degree $k+1$)
\[{\rm Sym}^n \mathfrak g[1]\otimes \mathfrak g[1]\to A\] given by $\langle l_n(v_1,\cdots, v_n),v_{n+1}\rangle$ is graded symmetric.
\end{definition}

Admissible pairings are particularly important for model building of classical BV theories in \cite{AKSZ}.

\begin{proposition}
Let $(A, d_A)$ be a cdga over $\mathbb R$. Fix a closed oriented manifold $\Sigma$ of dimension $m$ and consider an $L_\infty$ algebra over $(A, d_A)$ with an admissible pairing $\langle-,-\rangle$ of degree $m-3$. There exists an $m$-dimensional classical BV theory\footnote{Unlike ordinary scalar-valued functional, the action functional $S$ takes value in the cdga $(A,d_A)$.} in the sense of \cite{AKSZ, Co11}, determined by the classical BV action 
\[S[\phi]=\int_\Sigma \langle \phi, \frac12 d_\Sigma \phi +\sum_{n=0}^\infty \frac1{(1+n)!}l_n(\phi^{\otimes n})\rangle, \phi\in \Omega_\Sigma\otimes_{\mathbb R}\mathfrak g[1].\]
\end{proposition}

\begin{lemma}
Given a (curved) $L_\infty$ algebra $\mathfrak g$ over $(A, d_A)$, there exists a extended $L_\infty$ algebra $\mathfrak g\oplus\mathfrak g^\vee[k-2]$ with a non-degenerated admissible pairing of degree $k-2$ for any $k\in \mathbb Z$.
\label{lem:cotang}
\end{lemma}
\begin{proof}
Consider the minimal extension of $\mathfrak g$ by its coadjoint module $\mathfrak g^\vee[k-2]$. There exists a natural pairing $\mathfrak g[1]\otimes \mathfrak g^\vee[k-1]\to A$ of degree $k$. It is possible to extend the pairing onto $(\mathfrak g[1]\oplus \mathfrak g^\vee[k-1])\wedge(\mathfrak g[1]\oplus \mathfrak g^\vee[k-1])$ graded skew-symmetrically. To make sure the extended pairing is admissible, it remains to check 
\[\langle l_n(X_1,\cdots, X_n), \alpha\rangle =(-1)^{|\alpha|\cdot |X_n|}\langle l_n(X_1,\cdots,X_{n-1},\alpha),X_n\rangle\]
for all $X_1,\cdots, X_n\in \mathfrak g[1]$ and $\alpha\in \mathfrak g^\vee[k-1]$.

Consider the $l_n(X_1,\cdots, X_{n-1},-)$ as a linear operator of degree $1+\sum_{i=1}^{n-1} |X_i|$. By the coadjointness of the $L_\infty$ action on $\mathfrak g^\vee[k-1]$, 
\[\langle l_n(X_1,\cdots, X_n), \alpha\rangle =-(-1)^{(1+\sum_{i=1}^{n-1} |X_i|)\cdot |X_n|}\langle X_n,l_n(X_1,\cdots,X_{n-1},\alpha)\rangle.\]
By graded skew-symmetry of the extended pairing,
\[ \langle X_n,l_n(X_1,\cdots,X_{n-1},\alpha)\rangle=-(-1)^{|X_n|\cdot(1+|\alpha|+\sum_{i=1}^{n-1} |X_i|)}  \langle l_n(X_1,\cdots,X_{n-1},\alpha),X_n\rangle.\]
This completes the proof.
\end{proof}

Combining the above two results, for any $L_\infty$ algebra $\mathfrak g$ and for any closed oriented manifold $\Sigma$, there exists a classical BV theory, whose equation of motion encodes the Maurer-Cartan equation for $\mathfrak g$. This is in particular interesting way to encode Lie algebroid structure in such theories.

\begin{proposition}
Given a Lie algebroid $V\to M$, and given a closed $m$-dimensional oriented manifold $\Sigma$, there exists a classical BV theory on $\Sigma$ determined by the classical action 
\[S[\phi,\psi]=\int_\Sigma \langle \psi, d_\Sigma \phi +\sum_{n=0}^\infty \frac1{(1+n)!}l_n(\phi^{\otimes n})\rangle, \]
where $\phi\in  \Omega_{\Sigma}\otimes_{\mathbb R}\mathfrak g[1], \psi\in  \Omega_{\Sigma}\otimes_{\mathbb R}\mathfrak g^\vee[m-2]$. The $L_\infty$ algebra $\mathfrak g:=\Omega_{M}\otimes_{C^\infty_M} (\Gamma(T_M)[-1]\oplus\Gamma(V))$ encodes the Lie algebroid structure on $V$. The $\Omega_M$-linear pairing $\int_\Sigma \langle-,-\rangle$ is $(-1)$-shifted symplectic  and determines the BV bracket.
\label{prop:cotang}
\end{proposition}
To obtain a shifted symplectic pairing as required by the classical BV theories, we take the shifted cotangent complex of $\mathfrak g$, hence these field theories are called the cotangent theories\footnote{In general there are more cotangent theories than the topological ones we showed here, depending on the geometric structures on the worldsheet manifold $\Sigma$.} in \cite{Co11} and \cite{GG_ahat}. 
Quantization of cotangent BV theories has been studied extensively, for example in aforementioned references, and relevant vacuum diagrams consists of only $n$-gons for each $n\geq1$. By Feynman diagram ananlysis, each $n$-gon diagram is in one-one correspondence with the $n$-th Atiyah-Chern class of $\mathfrak g$.

\subsection{Poisson manifolds}

In the previous section, to obtain a classical BV theory, we start with an $L_\infty$ algebra encoding a Lie algebroid structure and take the shifted cotangent extension to construct an admissible pairing. Alternatively one could start with a cotangent theory, whose underlying $L_\infty$ algebra $\mathfrak h$ encodes only a smooth structure, with a fixed admissible pairing coming from taking the cotangents $\mathfrak h\oplus \mathfrak h^\vee[m-3]$. The classical Lagrangian looks exactly like the one in Proposition \ref{prop:cotang}. The question is, can this $L_\infty$ algebra $\mathfrak h\oplus \mathfrak h^\vee[m-3]$ be further deformed, while preserving the admissible structure? In the case that is relevant to our main result, the underlying bundle structure of the $L_\infty$-module $\mathfrak h^\vee[m-3]$ needs to be of degree zero, so $m=2$. The answer leads to the following theorem.

\begin{theorem}
Consider the extended $L_\infty$ algebra $\mathfrak h_M\oplus \mathfrak h_M^\vee[-1]$ encoding the cotangent bundle over a smooth manifold $M$, together with the natural admissible pairing as guaranteed by Lemma \ref{lem:cotang}, there exists a nontrivial geometric deformation on it only if $M$ has a Poisson structure. Conversely, each Poisson manifold $M$ has a unique geometric deformation of $\mathfrak h_M\oplus \mathfrak h_M^\vee[-1]$, which preserves the natural admissible pairing.
\label{posthm}
\end{theorem}

\begin{proof}
Consider the abelian extension of $\mathfrak h_M$ by its coadjoint module $\mathfrak h_M^\vee[-1]$,
whose underlying space is given by $(\mathfrak h_M\oplus \mathfrak h_M^\vee[-1])[1]\cong\Omega_M\otimes\Gamma(T_M[-1]\oplus T^\vee_M)[1]$. The natural pairing $\langle-,-\rangle$ between $\Gamma(T_M)$ and $\Gamma(T^\vee_M)$ is graded skew-symmetrically extended to the full space. As the $L_\infty$ structure is entirely determined by choices of connections in these bundles, in order for the pairing to be invariant, it suffices to require that 
\[\langle l_1(X),\alpha\rangle = (-1)^{|X|\cdot|\alpha|}\langle l_1(\alpha),X\rangle,\]
$\forall X\in \Omega_M\otimes \Gamma(T_M),  \alpha\in \Omega_M\otimes \Gamma(T^\vee_M)[1]$. Locally this means that 
\[\Gamma_{li}\,^j = -\Gamma'_{l}\,^j\,_i, \forall i,j,l.\]
To make sure the pairing is invariant with respect to the new structures $\{\rho_n,\mu_{n+1}\}_{n=1}^\infty$ obtained from geometric deformation, it is only necessary to solve the following two constraints on $\rho_1$, $\rho_2$ and $\mu_2$:
\[ \langle \rho_1(\alpha), \beta \rangle = (-1)^{|\alpha|\cdot|\beta|} \langle \rho_1(\beta),\alpha\rangle,\]
\[ \langle \rho_2(\alpha,X), \beta\rangle = (-1)^{|X|\cdot|\beta|}\langle \mu_2(\alpha,\beta),X \rangle\]
for any elements $\alpha,\beta\in \Omega_M\otimes \Gamma(T^\vee_M)[1]$ and $X\in \Omega_M\otimes \Gamma(T_M)$.
The first equation says that $\rho_1$ can be viewed as a a bi-vector field, i.e.,
\[
\langle\rho_1(-),-\rangle\in \Gamma(\wedge^2 T_M).
\]
The second equation leads to
\[\iota_X[\alpha,\beta] = \iota_X\mathcal{L}_{\rho(\alpha)}\beta -\iota_X\mathcal{L}_{\rho(\beta)}\alpha+\iota_X d(\iota_{\rho(\alpha)}\beta)\]
for any $X$, hence forcing $[\alpha,\beta]$\footnote{Recall that in Proposition \ref{prop:quadrel} we showed a Lie bracket structure can be defined on $\Gamma(V)$ based on the anchor $\rho_1$ and the torsion $\mu_2$.} to be the Poisson bracket. Now the bi-vector anchor and the Poisson bracket together determines a unique Poisson structure.
\end{proof}

From this point of view, Poisson sigma models \cite{CF00, CFT, Kont97} can be viewed as geometric deformations obtained from cotangent theories over a two-dimensional closed surface $\Sigma$
\[S_0[X,\alpha]=\int_\Sigma\langle\alpha, d_\Sigma X+\sum_{n=0}^\infty \frac1{n!}l_{n}(X^{\otimes n})\rangle,\] 
where $ X\in\Omega_\Sigma\otimes_{\mathbb R} \Omega_M\otimes \Gamma(T_M)$ and $\alpha\in\Omega_\Sigma\otimes_{\mathbb R}  \Omega_M\otimes \Gamma(T^\vee_M)[1]$.
The new structures $\{\rho_n,\mu_{n+1}\}_{n=1}^\infty$ give rise to an interaction action
\[S_{def}[X,\alpha]=\int_\Sigma \frac12\langle\alpha, \sum_{n=0}^\infty \frac1{n!}\rho_{n+1}(X^{\otimes n},\alpha)\rangle.\]
Moreover, if one wants to maintain the $(-1)$-shifted symplectic structure (i.e., the admissible pairing from the Lie point of view) as well as the underlying geometry (, i.e., the information encoded in the cotangent bundle $T^\vee_M$), the only BV theories one could build are Poisson sigma models.

For the interest of model building, it would be useful to also consider the geometric deformations of the $L_\infty$ structure on graded vector bundles. Simple examples include $T^\vee_M[m]$ among others. This particularly suits the differential graded formulations of BV theories as developed in \cite{CRM1, CRM2}. We plan to explore those possibilities in future work.

\newpage
\appendix
\section{Recursive solution in the split Abelian extensions}
\label{appA}

In this appendix, we show that the $L_\infty$ structures given in Proposition \ref{prop:hdefn} and \ref{prop:vdefn}, which form a split Abelian extension of $\mathfrak h$ by $\mathfrak v$, can be solved recursively from the quadratic equation on the respective CE differentials. In Lemma \ref{recursivelemma} we give the explicit formula, and the discussion leading to Proposition \ref{propsolvhstru} shows that such structure is unique up to a ``gauge" condition. Then we show in Proposition \ref{htpyeqnhstru} that any such structure is homotopy equivalent to each other. Finally, we attach some calculation results for the first couple of operations.

On $C^*(\mathfrak h)$, the quadratic relation
\begin{equation}
\frac12[d^{\nabla},d^{\nabla}]=0
\label{appeqn1}
\end{equation} for $d^{\nabla}$ decomposes as follows. For $n\geq2$,

\begin{equation}
\tag{\ref{appeqn1}.n}-[\delta, d^\nabla_{n}]=\frac12\sum_{1\leq k\leq n-1}[d_k^\nabla, d^\nabla_{n-k}].
\end{equation}

For consistency, we also have 
\begin{equation}
\tag{\ref{appeqn1}.1}
-[\delta, d^\nabla_{1}]=0.
\end{equation}

For the set of Equations $\{(\ref{appeqn1}.n)\}_{n=1}^\infty$, the left-hand side of each equation is $\delta$-exact. If the right-hand side is not $\delta$-closed, such equation can not be solved. We say that Equation $(\ref{appeqn1}.n)$ is {\it compatible} if its right-hand side commutes with $\delta$.

\begin{lemma}
Let $n$ be a positive integer.
Suppose Equation $(\ref{appeqn1}.k)$ is solvable for each $k\leq n$, then Equation $(\ref{appeqn1}.n+1)$ is compatible.
\end{lemma}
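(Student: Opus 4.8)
The proof will follow the standard pattern by which the obstruction to solving a Maurer--Cartan-type equation order by order is automatically a cocycle. Equip the graded Lie algebra of ($\mathbb R$-linear) derivations of $C^*(\mathfrak h)$ with the weight grading in which $\delta$ has weight $-1$, $d_1^\nabla=\nabla$ has weight $0$, and $d_k^\nabla$ has weight $k-1$; then $[d_j^\nabla,d_k^\nabla]$ has weight $j+k-2$, and equation~$(\ref{appeqn1}.m)$ is precisely the vanishing of the weight-$(m-2)$ component of $Q^2$, where $Q=\delta+\sum_{k\ge 1}d^\nabla_k$ is the (formal) total CE derivation. Throughout, since $\delta$ and all $d_k^\nabla$ are odd, the graded commutator is symmetric and sign issues are inert.

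First I would assemble from the hypothesis the genuine finite odd derivation $D:=\delta+\sum_{k=1}^{n}d_k^\nabla$, using the solutions to $(\ref{appeqn1}.k)$ for $k\le n$ supplied by the hypothesis, and set $R:=D^2=\tfrac12[D,D]$, an even derivation. Decomposing $R=\sum_m R_m$ by weight, I would check two facts. (i) For every $m\le n-2$, the component $R_m$ equals the left-hand side of the rearranged equation~$(\ref{appeqn1}.m+2)$ (for $m=-1$ it is $[\delta,\nabla]$, the content of $(\ref{appeqn1}.1)$, and for $m=-2$ it is $\delta^2=0$); hence $R_m=0$. (ii) The weight-$(n-1)$ component is $R_{n-1}=\tfrac12\sum_{1\le k\le n}[d_k^\nabla,d_{n+1-k}^\nabla]$, which is exactly the right-hand side of $(\ref{appeqn1}.n+1)$; there is no $[\delta,d_{n+1}^\nabla]$ contribution precisely because $D$ is truncated at level $n$.

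Next I would invoke the graded Jacobi identity in the trivial form $[D,R]=[D,D^2]=0$ and extract its weight-$(n-2)$ component. Every term of $[D,R]$ has the shape $[D_a,R_b]$ with $a+b=n-2$, where $D_a$ is the weight-$a$ summand of $D$ (so $D_{-1}=\delta$, $D_0=\nabla$, $D_{k-1}=d_k^\nabla$); since $b\le n-2$ forces $R_b=0$ by (i) unless $a=-1$, the only surviving term is $[\delta,R_{n-1}]$. Hence $[\delta,R_{n-1}]=0$, i.e.\ the right-hand side of $(\ref{appeqn1}.n+1)$ commutes with $\delta$, which is the compatibility asserted by the lemma.

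No genuine obstacle is anticipated: the argument is purely formal once the weight bookkeeping is set up, and it is the verbatim analogue --- for the longer derivation $D$ --- of the classical fact that a partially solved Maurer--Cartan equation has $\delta$-closed obstruction. The one point worth stating carefully is that $D$ is a bona fide finite derivation, so that $[D,D^2]=0$ and all the weight-component extractions are literal identities rather than formal-series manipulations.
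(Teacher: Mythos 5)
Your proposal is correct. It rests on the same two ingredients as the paper's argument --- the graded Jacobi identity and the already-solved equations $(\ref{appeqn1}.k)$, $k\le n$ --- but organizes them differently. The paper applies $\delta$ to the right-hand side of $(\ref{appeqn1}.n+1)$, substitutes the lower equations for each $[\delta,d^\nabla_k]$, and then verifies by hand that the resulting triple commutators, symmetrized over partitions $s+t+l=n+1$, cancel case by case via the Jacobi identity. You instead assemble the truncated total derivation $D=\delta+\sum_{k\le n}d^\nabla_k$, observe that the hypothesis says exactly that the weight components $R_b$ of $R=D^2$ vanish for $b\le n-2$ while $R_{n-1}$ is precisely the right-hand side of $(\ref{appeqn1}.n+1)$ (no $[\delta,d^\nabla_{n+1}]$ term appears because $D$ is truncated), and then read off $[\delta,R_{n-1}]=0$ from the weight-$(n-2)$ component of the trivial identity $[D,D^2]=0$. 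Your bookkeeping is accurate: in that weight the only term $[D_a,R_b]$ with $a+b=n-2$ not killed by the hypothesis is the one with $D_a=\delta$, and since $D$ is an honest finite derivation, $[D,D^2]=0$ holds literally. What your route buys is conceptual economy --- it is the standard ``the obstruction to the next order of a Maurer--Cartan iteration is automatically $\delta$-closed'' mechanism, and it eliminates the paper's somewhat delicate symmetrization and case analysis; what the paper's computation buys is an explicit, self-contained display of the cancelling terms without introducing the auxiliary operator $D$.
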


\begin{proof}
Consider $\delta$ acting on the right-hand side of $(\ref{appeqn1}.n+1)$:
\[\sum_{1\leq k\leq n}\frac12[\delta,[d_k^\nabla, d^\nabla_{1+n-k}]]=\frac12\sum_{1\leq k\leq n}  [[\delta,d_k^\nabla], d^\nabla_{1+n-k}] - [d_k^\nabla, [\delta,d^\nabla_{1+n-k}]].\]
Using $(\ref{appeqn1}.k)$ for all $1\leq k\leq n$, we have that
\[-\frac14\sum_{1\leq m<k\leq n} [[d_m^\nabla,d_{k-m}^\nabla], d^\nabla_{1+n-k}]+\frac14\sum_{1\leq k<m\leq n} [d_k^\nabla,[d_{m-k}^\nabla, d^\nabla_{1+n-m}]].\]
The summation can be re-expressed into a more symmetric form
\[\sum_{s+t+l=n+1}-\frac14[[d_s^\nabla,d_t^\nabla], d^\nabla_l] +\frac14[d_s^\nabla,[d_t^\nabla, d^\nabla_l]]. \]
Note that for a fixed (unordered) positive partition of $n+1$, the number of assignments of $s,t,l$ varies due to the symmetry of each partition. For example, if we have $n+1 = a+a+b\,(a\neq b)$, then we shall have in total three terms for $[[d_s^\nabla,d_t^\nabla], d^\nabla_l]$, corresponding to $s=b$, $t=b$ and $l=b$ respectively. So we look into each case individually.
\begin{enumerate}
\item Case $s=t=l$;\\
\item Case two of $s,t,l$ are equal;\\
\item Case $s\neq t\neq l$.
\end{enumerate} 
In each case, the contributions cancel among themselves due to the Jacobi identity of differential operators under composition. 
\end{proof}

\begin{lemma}
Assume that Equations $\{(\ref{appeqn1}.k)\}_{k=1}^n$ are solvable by $\{d^\nabla_{k}\}_{k=1}^{n}$, if Equation $(\ref{appeqn1}.n+1)$ is compatible, then there exists a derivation $d^\nabla_{n+1}$ solving Equation $(\ref{appeqn1}.n+1)$, under which the generators $\{y^i\}_{i=1}^{{\rm dim}(M)}$ of $C^*(\mathfrak h)$ have homogeneous images of $^{\mathfrak h}$weight $(n+1)$ given by
\[d^\nabla_{n+1}: y^i\mapsto - \frac12  \sum_{1\leq k\leq n}\delta^{-1}\left( [d^{\nabla}_k, d^{\nabla}_{n+1-k}](y^i)\right), \forall i.\]
\label{recursivelemma}
\end{lemma}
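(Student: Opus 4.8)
The plan is to verify directly that the proposed formula for $d^\nabla_{n+1}$ on generators solves equation $(\ref{appeqn1}.n+1)$, exploiting that $\delta^{-1}$ is a genuine homotopy inverse to $\delta$ on the relevant graded pieces. First I would record the key identity: on $\Omega^p_M\otimes\Gamma\big({\rm Sym}^q(T_M^\vee)\big)$ with $p+q>0$ one has $\delta\circ\delta^{-1}+\delta^{-1}\circ\delta = {\rm id}$, exactly as stated in the excerpt for the module case. Since a derivation of the commutative dg-algebra $C^*(\mathfrak h)$ is determined by its action on the algebra generators $\{y^i\}$, it suffices to exhibit $d^\nabla_{n+1}(y^i)$ for each $i$ and check that the induced derivation satisfies $(\ref{appeqn1}.n+1)$. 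Note each $d^\nabla_{n+1}(y^i)$ lies in $\Gamma\big({\rm Sym}^{n+1}(T_M^\vee)\big)\subset C^*(\mathfrak h)$, a piece with $q=n+1>0$, so $\delta^{-1}$ acts there and the homotopy identity applies.

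The heart of the argument is the following computation. Equation $(\ref{appeqn1}.n+1)$ reads $-[\delta,d^\nabla_{n+1}] = \tfrac12\sum_{1\le k\le n}[d^\nabla_k,d^\nabla_{n+1-k}]$, and evaluating both sides on $y^i$ (using $\delta(y^i)=dx^i$, which is $\delta$-closed, so that $[\delta,d^\nabla_{n+1}](y^i) = \delta(d^\nabla_{n+1}(y^i))$) this becomes
\[
-\delta\big(d^\nabla_{n+1}(y^i)\big) = \tfrac12\sum_{1\le k\le n}[d^\nabla_k,d^\nabla_{n+1-k}](y^i).
\]
I would then apply $\delta\circ\delta^{-1}$ to the right-hand side. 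By the homotopy identity, $\delta\delta^{-1}(w) = w - \delta^{-1}\delta(w)$, so the claim that the proposed $d^\nabla_{n+1}(y^i)$ solves the equation amounts to showing $\delta^{-1}\delta\big(\tfrac12\sum_k[d^\nabla_k,d^\nabla_{n+1-k}](y^i)\big)=0$, i.e. that the right-hand side is already $\delta$-closed after evaluation on $y^i$. This is precisely where the compatibility Lemma enters: by the preceding lemma, under the standing hypothesis that $(\ref{appeqn1}.k)$ is solvable for all $k\le n$, equation $(\ref{appeqn1}.n+1)$ is compatible, meaning $[\delta,\tfrac12\sum_k[d^\nabla_k,d^\nabla_{n+1-k}]]=0$ as an operator identity. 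To pass from the operator statement to $\delta$-closedness of the value on $y^i$, I would expand $[\delta,[d^\nabla_k,d^\nabla_{n+1-k}]](y^i) = \delta\big([d^\nabla_k,d^\nabla_{n+1-k}](y^i)\big) \pm [d^\nabla_k,d^\nabla_{n+1-k}](\delta y^i)$ and observe that $[d^\nabla_k,d^\nabla_{n+1-k}](dx^i)=0$ since $d^\nabla_k$ for $k\ge 1$ preserves the $\Omega_M$-component trivially on $dx^i$ (equivalently, $\delta y^i = dx^i$ is annihilated by the relevant higher pieces, just as $d^{\nabla'}_m(y^i)=0$ is used in the module computation of Proposition \ref{propSol}). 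Hence the operator vanishing forces $\delta\big(\tfrac12\sum_k[d^\nabla_k,d^\nabla_{n+1-k}](y^i)\big)=0$, completing the verification; that this formula then recursively defines the full derivation, with $d^\nabla_1 = \nabla+\delta$ as base case satisfying $(\ref{appeqn1}.1)$ trivially, finishes the induction.

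I expect the main obstacle to be the bookkeeping in the last step --- carefully checking that the boundary term $[d^\nabla_k,d^\nabla_{n+1-k}](\delta y^i)$ genuinely vanishes, which requires tracking which components of $d^\nabla_k$ can act nontrivially on the $\Omega^1_M$-valued element $dx^i$, and confirming that no curvature-type contribution sneaks in at form degree $1$. The rest is a routine transcription of the module-case argument in Proposition \ref{propSol} to the algebra $C^*(\mathfrak h)$, together with an invocation of the compatibility Lemma whose proof has just been given.
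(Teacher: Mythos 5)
Your proposal is correct and follows essentially the same route as the paper's proof: evaluate on the generators $y^i$, apply $\delta$ and use $\delta\delta^{-1}=\mathrm{id}-\delta^{-1}\delta$ on the relevant component, and deduce $\delta$-closedness of $\tfrac12\sum_{k}[d^\nabla_k,d^\nabla_{n+1-k}](y^i)$ from the compatibility hypothesis together with the vanishing of the boundary term on $\delta y^i=dx^i$. Two minor slips worth fixing: the identity $[\delta,d^\nabla_{n+1}](y^i)=\delta\big(d^\nabla_{n+1}(y^i)\big)$ holds because the $\Omega_M$-linear derivation $d^\nabla_{n+1}$ (and likewise $\nabla$, locally) annihilates $dx^i\in\Omega_M$, not because $dx^i$ is $\delta$-closed, and the base case is $d^\nabla_1=\nabla$, with $\delta$ kept as a separate summand of $d^\nabla$.
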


\begin{proof}
A choice of Levi-Civita connection $\nabla$ induces a connection on $C^*(\mathfrak h)$, which is again denoted by $\nabla$. It is known that $d^\nabla_1=\nabla$, which is locally written (in compatible coordinates) as
\[ d^\nabla_1= d_M +dx^j(\nabla_j y^i)_k y^k\frac\partial{\partial y^i} .\]
For $k\geq2$, each $d^\nabla_k$ is $\Omega_M$-linear, and is entirely determined by the image of $y^i$ for each $i$. We claim that 
\[d^\nabla_{n+1}(y^i) = -\frac12  \sum_{1\leq k\leq n}\delta^{-1}\left( [d^{\nabla}_k, d^{\nabla}_{n+1-k}](y^i)\right)\]
provides a solution for $d^\nabla_{n+1}(y^i)$, $\forall i$, and hence a solution for $d^\nabla_{n+1}$.

Let $\delta$ act on both sides of the above relation.
The left-hand side is precisely $-[\delta, d^\nabla_{n+1}](y^i)$.
The right-hand side is 
\[\frac12\sum_{1\leq k\leq n-1} \delta \delta^{-1} \left([d_k^\nabla, d^\nabla_{n-k}](y^i) \right) \equiv \frac12 \sum_{1\leq k\leq n-1} (id - \delta^{-1} \delta)\left([d_k^\nabla, d^\nabla_{n-k}](y^i) \right) . \]
But the last term vanishes, which follows from
\[ \sum_{1\leq k\leq n-1}  \delta \left([d_k^\nabla, d^\nabla_{n-k}](y^i) \right)=\sum_{1\leq k\leq n-1} [\delta ,[d_k^\nabla, d^\nabla_{n-k}]](y^i) + [d_k^\nabla, d^\nabla_{n-k}](\delta y^i)\]
and the compatibility assumption.
\end{proof}

In the following discussion, we shall denote by $\delta^{-1}(d^\nabla_n)$ the operator
\[\sum_{i=1}^{{\rm dim}(M)} \delta^{-1} \big(d^\nabla_n(y^i)\big)\frac\partial{\partial y^i}\in \Gamma\big( {\rm Sym}^{n}(T_M^\vee )\otimes T_M \big)\subset C^*(\mathfrak h)\otimes \mathfrak h[1]\]
 for $n\geq2$.

\begin{proposition}
The differential $d^\nabla\equiv \delta+\nabla+\sum_{n\geq2} d^\nabla_n$ be solved recursively and uniquely from the equation $d^\nabla\circ d^\nabla=0$ for any fixed Levi-Civita connection $\nabla$, subjecting to the gauge condition $\delta^{-1} (d^{\nabla}_n) = 0$ for all $n\geq2$.
\label{propsolvhstru}
\end{proposition}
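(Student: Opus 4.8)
The plan is to run an induction on the $^{\mathfrak h}$weight that simply packages the compatibility lemma above together with Lemma~\ref{recursivelemma}, and then to read off uniqueness from the gauge condition. First I would record that, for a derivation $d^\nabla=\delta+\nabla+\sum_{n\ge2}d^\nabla_n$ of the completed algebra $C^*(\mathfrak h)=\widehat{{\rm Sym}}_{\Omega_M}(\mathfrak h^\vee[-1])$, the single equation $d^\nabla\circ d^\nabla=0$ is equivalent to the entire family $\{(\ref{appeqn1}.n)\}_{n\ge1}$, the decomposition being by the number of $y$-variables; there is no convergence issue, since each fixed $^{\mathfrak h}$weight receives contributions from only finitely many of the $d^\nabla_k$.

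For the base of the induction, $(\ref{appeqn1}.1)$ reads $[\delta,\nabla]=0$. A one-line local computation gives $[\delta,\nabla](y^i)=\Gamma^i_{jk}\,dx^j\wedge dx^k$, so this holds precisely because $\nabla$ is Levi-Civita (i.e.\ torsion-free); this is the only place the Levi-Civita hypothesis is consumed, and it fixes $d^\nabla_1=\nabla$ as in Proposition~\ref{prop:hdefn}. For the inductive step, I would assume $d^\nabla_k$ has been constructed for all $2\le k\le n$ so that $(\ref{appeqn1}.k)$ holds and the gauge condition $\delta^{-1}(d^\nabla_k)=0$ is satisfied. By the compatibility lemma above, solvability of $(\ref{appeqn1}.k)$ for all $k\le n$ forces the right-hand side $\tfrac12\sum_{1\le k\le n}[d^\nabla_k,d^\nabla_{n+1-k}]$ of $(\ref{appeqn1}.n+1)$ to commute with $\delta$, so $(\ref{appeqn1}.n+1)$ is compatible. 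Lemma~\ref{recursivelemma} then supplies a solution $d^\nabla_{n+1}$, determined on the generators by $d^\nabla_{n+1}(y^i)=-\tfrac12\sum_{1\le k\le n}\delta^{-1}\big([d^\nabla_k,d^\nabla_{n+1-k}](y^i)\big)$ and extended as an $\Omega_M$-linear derivation. Since each $d^\nabla_{n+1}(y^i)$ lies in the image of $\delta^{-1}$ and $\delta^{-1}\circ\delta^{-1}=0$, applying $\delta^{-1}$ once more annihilates it, so $\delta^{-1}(d^\nabla_{n+1})=0$ and the gauge condition is preserved; the recursion therefore continues indefinitely, producing $d^\nabla$ as claimed.

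For uniqueness I would argue that two derivations $d^\nabla_{n+1}$ and $\tilde d^\nabla_{n+1}$ solving $(\ref{appeqn1}.n+1)$ and both satisfying $\delta^{-1}(\,\cdot\,)=0$ must coincide. Their difference $E$ is $\Omega_M$-linear, hence $E(dx^i)=0$, and $[\delta,E]=0$ then gives $\delta(E(y^i))=0$; using $\delta\delta^{-1}+\delta^{-1}\delta={\rm id}$ on the relevant (form-degree-one, positive-weight) components, $E(y^i)=\delta\big(\delta^{-1}E(y^i)\big)$, and $\delta^{-1}E(y^i)=\delta^{-1}d^\nabla_{n+1}(y^i)-\delta^{-1}\tilde d^\nabla_{n+1}(y^i)=0$ by the gauge condition, so $E(y^i)=0$ for every $i$ and thus $E=0$. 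I would emphasise that the gauge condition is genuinely needed here: $\delta$ is not injective on the form-degree-one components of $C^*(\mathfrak h)$, so $[\delta,E]=0$ by itself would not pin $E$ down.

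The substance of the argument is already carried by the compatibility lemma and by Lemma~\ref{recursivelemma}; the proposition is their organisation into a single recursion, so I do not expect any serious obstacle. The only points requiring care are that the induction hypothesis must simultaneously record ``solved so far'' (needed to invoke compatibility at the next weight) and ``gauge-fixed so far'' (needed for the uniqueness step), and that the base case silently uses the torsion-free property of the Levi-Civita connection.
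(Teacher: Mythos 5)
Your proposal is correct and follows essentially the same route as the paper's proof: an induction whose base case is the torsion-free condition for $\nabla$, whose inductive step combines the compatibility lemma with Lemma~\ref{recursivelemma}, and whose uniqueness step uses the gauge condition together with the homotopy identity for $\delta,\delta^{-1}$ on positive-weight components. Your added remarks (that $\delta^{-1}\circ\delta^{-1}=0$ preserves the gauge, and the uniqueness argument phrased via the difference $E$) merely spell out details the paper leaves implicit.
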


\begin{proof}
The induction starts with the relation for $d^\nabla_1\equiv \nabla$, which states the torsion-free condition in the initial step. The construction for $d^\nabla_{n+1}(y^i)$ uses a gauge choice that $\delta^{-1} d^{\nabla}_n(y^i) = 0$ for all $n\geq2$, $i=1,\cdots,{\rm dim}(M)$. Indeed, any solution $\tilde d^\nabla_{n+1}(y^i)$ satisfying the same gauge condition and the same initial condition $\tilde d^\nabla_1\equiv \nabla$ has to be given by Lemma \ref{recursivelemma}, since the operator $\delta$ is invertible on images of $\delta^{-1}$, and so the relation
\[\delta( \tilde d^\nabla_{n+1}(y^i)) = \delta (d^\nabla_{n+1}(y^i)), \quad i=1,\cdots, {\rm dim}(M)\]
enforced by Equation $(\ref{appeqn1}.n+1)$
implies that $ \tilde d^\nabla_{n+1}= d^\nabla_{n+1}$.
\end{proof}

Indeed, the $\delta^{-1}$ gauge needs not to be the unique choice. Without much efforts, one obtains the following generalization.
\begin{corollary}
For any $\phi\in \Gamma\big( \widehat{\rm Sym}^{\geq3}(T_M^\vee )\otimes T_M \big)$, and for any Levi-Civita connection $\nabla$ on $T_M$, there exists a unique differential $d^{\nabla,\phi}$ satisfying the quadratic Equation (\ref{appeqn1}) such that \[d^{\nabla,\phi}\equiv \delta+\nabla+\sum_{n\geq2} d^{\nabla,\phi},\] and 
\[\delta^{-1} (d^{\nabla,\phi}_{\geq2}) = \phi.\]
\label{gaugechoice}
\end{corollary}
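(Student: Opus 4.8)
The plan is to rerun the recursion of Lemma~\ref{recursivelemma} and Proposition~\ref{propsolvhstru}, inserting at each step one $\delta$-exact correction term dictated by $\phi$. First I would decompose $\phi=\sum_{m\geq3}\phi_{[m]}$ into its homogeneous pieces $\phi_{[m]}\in\Gamma({\rm Sym}^{m}(T_M^\vee)\otimes T_M)$, written locally as $\phi_{[m]}=\sum_i(\phi_{[m]})^i\,\frac{\partial}{\partial y^i}$ with $(\phi_{[m]})^i\in\Gamma({\rm Sym}^{m}(T_M^\vee))$ of form degree $0$. The induction starts at $d^{\nabla,\phi}_1:=\nabla$, which satisfies $(\ref{appeqn1}.1)$ by torsion-freeness and carries no gauge ambiguity. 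Assume now that $\Omega_M$-linear derivations $d^{\nabla,\phi}_k$ have been built for $2\leq k\leq n$, solving $\{(\ref{appeqn1}.k)\}_{k=1}^{n}$ and satisfying $\delta^{-1}(d^{\nabla,\phi}_k)=\phi_{[k+1]}$.

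By the compatibility lemma of Appendix~\ref{appA} --- whose hypothesis is only the solvability of the lower-order equations, hence is insensitive to the gauge --- the right-hand side $R_{n}:=\frac12\sum_{1\leq k\leq n}[d^{\nabla,\phi}_k,\,d^{\nabla,\phi}_{n+1-k}]$ of $(\ref{appeqn1}.n+1)$ commutes with $\delta$, and since $R_n$ is $\Omega_M$-linear it kills $dx^i=\delta y^i$, so $\delta(R_n(y^i))=0$ for every $i$. I would then take $d^{\nabla,\phi}_{n+1}$ to be the $\Omega_M$-linear derivation determined on generators by
\[
d^{\nabla,\phi}_{n+1}(y^i):=-\,\delta^{-1}(R_n(y^i))+\delta((\phi_{[n+2]})^i),
\]
and check three points, using only the homotopy identities for $\delta$ already recorded in the excerpt ($\delta\delta^{-1}+\delta^{-1}\delta={\rm id}$ for $p+q>0$, $\delta^{-1}=0$ in form degree $0$, and $\delta^{-1}\circ\delta^{-1}=0$): first, the added term is $\delta$-closed, so $[\delta,d^{\nabla,\phi}_{n+1}](y^i)=\delta(d^{\nabla,\phi}_{n+1}(y^i))=-\delta\delta^{-1}R_n(y^i)=-R_n(y^i)$, whence $(\ref{appeqn1}.n+1)$ holds on generators and therefore as an identity of $\Omega_M$-linear derivations; second, applying $\delta^{-1}$ yields $\delta^{-1}(d^{\nabla,\phi}_{n+1})=\phi_{[n+2]}$, exactly the prescribed gauge component at this weight; third, the compatibility of the next equation is untouched, so the recursion closes and $d^{\nabla,\phi}:=\delta+\nabla+\sum_{n\geq2}d^{\nabla,\phi}_n$ satisfies $\frac12[d^{\nabla,\phi},d^{\nabla,\phi}]=0$ together with $\delta^{-1}(d^{\nabla,\phi}_{\geq2})=\sum_{m\geq3}\phi_{[m]}=\phi$. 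Uniqueness follows by the same induction: a competitor $\tilde d$ has $\tilde d_1=\nabla$, produces the same $R_n$ once it agrees through weight $n$, and then $(\ref{appeqn1}.n+1)$ forces $\delta$ of the difference $\tilde d_{n+1}(y^i)-d^{\nabla,\phi}_{n+1}(y^i)$ to vanish while $\delta^{-1}(\tilde d_{n+1})=\phi_{[n+2]}=\delta^{-1}(d^{\nabla,\phi}_{n+1})$ forces $\delta^{-1}$ of it to vanish; an element $z$ annihilated by both $\delta$ and $\delta^{-1}$ is $z=\delta\delta^{-1}z+\delta^{-1}\delta z=0$.

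The one place that genuinely needs care is the form-degree and symmetric-weight bookkeeping that identifies the free parameter at each step: one must check $d^{\nabla,\phi}_{n+1}(y^i)\in\Omega^1_M\otimes\Gamma({\rm Sym}^{n+1}(T_M^\vee)\otimes T_M)$ and $R_n(y^i)\in\Omega^2_M\otimes\Gamma({\rm Sym}^{n}(T_M^\vee)\otimes T_M)$, so that the ambiguity in solving $\delta z=-R_n(y^i)$ is a $\delta$-exact term $\delta\psi^i$ with $\psi^i$ of form degree $0$ and symmetric degree $n+2$; this is precisely why the ambiguity first appears at step $n+1=2$ in symmetric degree $3$, and hence why the gauge parameter ranges over $\widehat{\rm Sym}^{\geq3}(T_M^\vee)\otimes T_M$ and not over $\widehat{\rm Sym}^{\geq2}$. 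Apart from this accounting, the argument is a verbatim repetition of the proofs of Lemma~\ref{recursivelemma} and Proposition~\ref{propsolvhstru}, with the single extra $\delta$-exact summand carried along throughout.
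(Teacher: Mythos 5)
Your proposal is correct and follows essentially the same route as the paper: the paper's proof consists precisely of the recursive formula $d^{\nabla,\phi}_{n+1}(y^i)= -\tfrac12\sum_{1\leq k\leq n}\delta^{-1}\bigl([d^{\nabla,\phi}_k, d^{\nabla,\phi}_{n+1-k}](y^i)\bigr)+\delta\phi_{n+2}(y^i)$, leaving the verification to the already-established Lemma~\ref{recursivelemma} and Proposition~\ref{propsolvhstru}. Your write-up simply makes explicit the checks (compatibility is gauge-insensitive, $\delta^{-1}\delta^{-1}=0$ and $\delta^{-1}=0$ in form degree zero give the gauge condition, and the ``killed by both $\delta$ and $\delta^{-1}$'' argument gives uniqueness) that the paper leaves implicit, including the correct weight bookkeeping explaining the $\widehat{\rm Sym}^{\geq3}$ range of $\phi$.
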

\begin{proof}
Consider the recursive solution
\[d^{\nabla,\phi}_{n+1}( y^i)= - \frac12  \sum_{1\leq k\leq n}\delta^{-1}\left( [d^{\nabla,\phi}_k, d^{\nabla,\phi}_{n+1-k}](y^i)\right)+ \delta \phi_{n+2}( y^i), \]
for $i=1,\cdots ,{\rm dim}(M)$, where $\phi_{n+2}$ is the homogeneous component of $\phi$ in $ \Gamma\big( {\rm Sym}^{n+2}(T_M^\vee )\otimes T_M \big)$.
The uniqueness of such a solution is established through induction on $n$,observing that each $d^{\nabla,\phi}_{n+1}$ is uniquely determined by a set of equations
\[\delta(d^{\nabla,\phi}_{n+1})=\cdots, \delta^{-1}(d^{\nabla,\phi}_{n+1})=\cdots,\]
where the uniqueness of the right-hand sides follows from the inductive hypothesis.
\end{proof}

\begin{lemma}
Given a family of connections $\nabla^t$ on $T_M$ and a family of gauge conditions $\phi^t\in \Gamma\big( \widehat{\rm Sym}^{\geq3}(T_M^\vee )\otimes T_M \big)$, both parameterised by the standard $n$-simplex $\Delta^n$, there exists a unique flat $\Delta^n$-family of CE differentials $d(t)$ on $C^*(\mathfrak h)$, such that $d_1(t) = \nabla^t$ and $\delta^{-1}(\sum_{m\geq2}d_m(t)) = \phi^t$.
\label{familylem}
\end{lemma}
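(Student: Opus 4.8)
The plan is to run the Fedosov-type recursion of Proposition~\ref{propsolvhstru} --- in the arbitrary-gauge form of the Corollary above --- over the base $\Omega_{\Delta^n}$, in the spirit of the construction behind Proposition~\ref{gerhtpy}. For each $t\in\Delta^n$ let $\mathcal D(t):=\nabla^t+\sum_{m\geq2}d^{\nabla^t,\phi^t}_m$ be the $t$-parametrised CE differential of $C^*(\mathfrak h)$ with $\delta$ removed, so that $(\delta+\mathcal D(t))^2=0$ and $\delta^{-1}\big(\mathcal D(t)_{\geq2}\big)=\phi^t$; these are the fibrewise data to be interpolated. I would look for an extended differential
\[
d^{(t)}\;=\;d_t+F(t)+\delta+\mathcal D(t)
\]
on $\Omega_{\Delta^n}\otimes_{\mathbb R}C^*(\mathfrak h)$, where $d_t$ is the de Rham differential of $\Delta^n$ and $F(t)$ is a degree-$1$, $\Omega_M$-linear derivation of $C^*(\mathfrak h)$ valued in $\Omega^1_{\Delta^n}$, hence determined by the family $\{F(t)(y^i)\}_i\subset\Omega^1_{\Delta^n}\otimes\Gamma\big(\widehat{{\rm Sym}}(T^\vee_M)\big)$. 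A degree count (there being no bundle besides $T_M$) rules out any $\Omega_M$-form-degree term in $F(t)$, and I would further normalise $F(t)$ to lie in $\Omega^1_{\Delta^n}\otimes\Gamma\big(\widehat{{\rm Sym}}^{\geq2}(T^\vee_M)\otimes T_M\big)$, i.e.\ demand that $F(t)(y^i)$ carry no constant or linear $y$-term, exactly as in the discussion preceding Proposition~\ref{gerhtpy}. Restricting $dt=0$ then recovers $d^{\nabla^t,\phi^t}$ on each fibre, and $d_t+F(t)$ is a connection on the trivial $C^*(\mathfrak h)$-bundle over $\Delta^n$.

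Expanding $(d^{(t)})^2=0$ and using $d_t^2=0$, $[d_t,\delta]=0$ and $(\delta+\mathcal D(t))^2=0$, the nilpotency equation decomposes by $\Delta^n$-form degree into
\[
[\delta,F(t)]\;=\;-\,[d_t,\mathcal D(t)]\;-\;[\mathcal D(t),F(t)]
\qquad\text{and}\qquad
[d_t,F(t)]+\tfrac12[F(t),F(t)]=0,
\]
the second of which is the flatness of $d_t+F(t)$; here $[d_t,-]$ denotes differentiation of the $t$-dependence. I would solve the first equation recursively along the ${}^{\mathfrak h}$weight filtration: as in Lemma~\ref{recursivelemma}, on each homogeneous component the left-hand side evaluated on $y^i$ is $\delta\big(F(t)(y^i)\big)$, while the right-hand side of ${}^{\mathfrak h}$weight $k$ involves only $\mathcal D(t)$ and the strictly-lower-weight components of $F(t)$; one therefore sets
\[
F(t)(y^i)\;=\;-\,\delta^{-1}\!\Big(\,[d_t,\mathcal D(t)]+[\mathcal D(t),F(t)]_{<}\,\Big)(y^i)
\]
weight by weight, the subscript ``$<$'' flagging the strictly-lower-weight part. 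The recursion starts in weight $1$ with $[\delta,F_1(t)]=-[d_t,\nabla^t]$, whose right-hand side is $\delta$-exact precisely because $\partial_t\Gamma^i_{jk}(t)$ is symmetric in $j,k$ (torsion-freeness); uniqueness under the chosen normalisation is automatic since $\delta$ is invertible on the image of $\delta^{-1}$.

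The content of the argument is \emph{compatibility}: at each stage the element to which $\delta^{-1}$ is applied must be $\delta$-closed on the component where $\delta^{-1}$ is a genuine inverse, so that the formula really solves the weight-$k$ equation --- and, similarly, the flatness equation must follow automatically from the recursion. Both are the graded-Jacobi bookkeeping already carried out in Lemma~\ref{compatib} and in the compatibility lemma of this appendix, and in the analysis leading to Proposition~\ref{gerhtpy}: let $\delta$ (respectively $d_t$) act on the relevant right-hand side, rewrite each $[\delta,[A,B]]$ via the Jacobi identity as $[[\delta,A],B]\pm[A,[\delta,B]]$, substitute the lower-weight instances of the equations already established together with $(\delta+\mathcal D(t))^2=0$ and $[d_t,\delta]=0$, and observe that everything cancels by the (graded) Jacobi identity for the operators $d_t,\delta,\mathcal D(t),F(t)$. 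I expect this inductive cancellation --- now having to carry along the cross-terms between $d_t$, $F(t)$ and $\mathcal D(t)$ that do not appear in the single-parameter statements --- to be the only real obstacle; it is routine but must be organised so that the induction hypothesis (solvability of all lower-weight equations) is precisely what is needed at each step. With this in place, $d^{(t)}=d_t+F(t)+\delta+\mathcal D(t)$ satisfies $(d^{(t)})^2=0$, restricts to $d^{\nabla^t,\phi^t}$ on every fibre, and splits into the flat connection $d_t+F(t)$ and the $t$-parametrised CE differential $d^{(t)}_{CE}:=\delta+\mathcal D(t)$ with $d_1(t)=\nabla^t$ and $\delta^{-1}(\sum_{m\geq2}d_m(t))=\phi^t$; this is the asserted flat family.
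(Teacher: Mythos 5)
Your proposal is correct and follows essentially the same route as the paper: one takes the fibrewise differentials $\delta+\mathcal D(t)$ produced by the arbitrary-gauge recursion, solves the flat-section equation $[\delta,F(t)]=-[d_t,\mathcal D(t)]-[\mathcal D(t),F(t)]$ weight by weight via $\delta^{-1}$ with $F(t)$ normalised to lie in $\Omega^1_{\Delta^n}\otimes\Gamma\big(\widehat{\rm Sym}^{\geq2}(T^\vee_M)\otimes T_M\big)$ (the paper's $F_2(t)=-\delta^{-1}([d_t,\nabla^t])$ is your lowest-weight step), and then verifies flatness of $d_t+F(t)$ by the same Jacobi-identity induction on ${}^{\mathfrak h}$weight, seeded by the explicitly checkable identity $[d_t,F_2(t)]=0$. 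The only difference is presentational detail in the inductive cancellation, which you describe at about the same level of rigour as the paper itself.
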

\begin{proof}
Consider the parameterised differential
\[d(t) \equiv \delta+\sum_{m\geq1}d_m(t)= \delta + d_1^{\nabla^t} +\cdots\] and the gauge condition
\[\delta^{-1}(d(t)) =  \phi^t.\]
By previous results, $d(t)$ exists uniquely. 
It remains to construct a flat connection $d_t+F(t)$, of which $d(t)$ is a flat section, with 
\[F(t)\equiv \sum_{m\geq2} F_m(t),\quad F_m(t) \in \Omega_{\Delta^n}\otimes_{\mathbb R}\Gamma\big( {\rm Sym}^m(T_M^\vee )\otimes T_M \big).\] 
A recursive computation can be done by the flat section equation
\[[d_t+\sum_{k\geq2} F_k(t), \delta+ d_1(t)+\sum_{m\geq 2} d_m(t)]=0,\]
and the solution is unique (considering that $\delta$ is invertible on zero-forms over $M$).
In particular, \[F_2(t) = -\delta^{-1}\big([d_t, \nabla^t ]\big)=dt^i\wedge \delta^{-1}\big( \frac{d}{dt^i} \nabla^t\big).\]
Then one needs to check the flatness \[[d_t, \sum_{m\geq2}F_m(t)]+\frac12[\sum_{m\geq2}F_m(t),\sum_{k\geq2}F_k(t)] =0,\] 
which is equivalent to the equation
\begin{eqnarray*}&&\left[\delta,\big[d_t, \sum_{m\geq2}F_m(t)\big]+\frac12\big[\sum_{m\geq2}F_m(t),\sum_{k\geq2}F_k(t)\big] \right] =0\\
&=& -\left[d_t+\sum_{k\geq2}F_k(t), \big[\delta, \sum_{m\geq2}F_m(t)\big]\right] \\
&=& \left[d_t+\sum_{k\geq2}F_k(t), \big[\sum_{l\geq1} d_l(t), d_t+\sum_{m\geq2}F_m(t)\big]\right]\\
&=&-\left[\sum_{l\geq1} d_l(t),\big[d_t, \sum_{m\geq2}F_m(t)\big]+\frac12\big[\sum_{m\geq2}F_m(t),\sum_{k\geq2}F_k(t)\big] \right].
\end{eqnarray*}
In homogeneous weight components, the equation can be solved recursively, as long as the initial condition 
\[[d_t, F_2(t)] = 0\]
is satisfied. The latter can be checked explicitly.
\end{proof}

\begin{proposition}[Homotopy among different $\delta^{-1}$ gauge conditions]
Let $d^\nabla$ be a CE differential as solved as in Proposition \ref{propsolvhstru} subjecting to the condition that $\delta^{-1} (d_{\geq2}^\nabla) = 0$, and let $\tilde{d}=\delta+\tilde{d}_{\geq1}$ be a CE differential satisfying the $\delta^{-1}$ gauge condition
$\delta^{-1} (\tilde{d}_{\geq2}) = \phi$ for any $ \phi\in \Gamma\big( \widehat{\rm Sym}^{\geq3}(T_M^\vee )\otimes T_M \big)$. Then those $L_\infty$ structures $d^\nabla$ and $\tilde{d}$ are homotopic for arbitrary $n\geq0$.
\label{htpyeqnhstru}
\end{proposition}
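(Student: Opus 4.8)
The plan is to obtain $d^\nabla$ and $d'$ as the two ends $t=0$ and $t=1$ of the flat family furnished by Lemma~\ref{familylem}, applied over the $1$-simplex $\Delta^1=[0,1]$. First I would note that $d'$ is necessarily of the form produced by the Corollary above from a pair $(\nabla',\phi)$ with $\nabla'$ a Levi-Civita connection: indeed, by Proposition~\ref{prop:hdefn} the unary part $d'_1$ of any CE differential of the relevant type on $\mathfrak h$ is a connection $\nabla'$ on $T_M$, and equation~$(\ref{appeqn1}.1)$ forces this connection to be torsion-free, so $d'$ coincides with the unique differential attached to $(\nabla',\phi)$ by that Corollary. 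Similarly $d^\nabla$ is, by Proposition~\ref{propsolvhstru}, the unique differential attached to the pair $(\nabla,0)$.

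Next I would choose the interpolating data. I would join $\nabla$ and $\nabla'$ by a smooth path $\nabla^t$, $t\in[0,1]$, staying inside the admissible class --- for instance the straight line $\nabla^t=(1-t)\nabla+t\nabla'$, whose torsion equals $(1-t)T^\nabla+t\,T^{\nabla'}=0$, or, if one wishes to keep the connections genuinely Levi-Civita, the Levi-Civita connection of the (still positive-definite) metric $(1-t)g+t\,g'$. I would then set $\phi^t:=t\phi\in\Gamma\big(\widehat{\rm Sym}^{\geq3}(T_M^\vee)\otimes T_M\big)$, a smooth family with $\phi^0=0$ and $\phi^1=\phi$. Lemma~\ref{familylem} with $n=1$ then yields a flat family of CE differentials $d(t)$ on $C^*(\mathfrak h)$ together with a flat connection $d_t+F(t)$ of which $d(t)$ is a flat section, satisfying $d_1(t)=\nabla^t$ and $\delta^{-1}\big(\sum_{m\geq2}d_m(t)\big)=\phi^t$; this is exactly the homotopy datum in the sense of the relevant definition.

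Finally I would identify the two ends. At $t=0$ one has $d_1(0)=\nabla$ and $\delta^{-1}\big(\sum_{m\geq2}d_m(0)\big)=0$, whence $d(0)=d^\nabla$ by the uniqueness in Proposition~\ref{propsolvhstru}; at $t=1$ one has $d_1(1)=\nabla'$ and $\delta^{-1}\big(\sum_{m\geq2}d_m(1)\big)=\phi$, whence $d(1)=d'$ by the uniqueness in the Corollary. The whole argument is thus carried by Lemma~\ref{familylem}; the only points that need care are checking that the chosen path of connections remains in the admissible class (a convexity observation) and that the endpoints are pinned down on the nose by the uniqueness statements rather than merely up to isomorphism, and I expect these to be the only --- and rather mild --- obstacles.
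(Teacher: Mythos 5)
Your proposal is correct and follows essentially the same route as the paper: interpolate the connections linearly as $\nabla+t(\nabla'-\nabla)$ over $\Delta^1$, take the parameterised gauge $\delta^{-1}(d(t))=t\phi$, and invoke Lemma~\ref{familylem}, with the endpoints pinned down by uniqueness. Your additional checks (torsion-freeness along the convex path, identification of $d'$ via the Corollary) are fine elaborations of details the paper leaves implicit.
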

\begin{proof}
Noticing that the space of connections and the space of gauge fixing conditions are both contractible, the result follows from Lemma \ref{familylem}.\end{proof}

For the differential on $C^*(\mathfrak h\oplus \mathfrak v)$ encoding the $L_\infty$ structure of the semi-direct product, we also need to compute $d^{\nabla'}\equiv d^{\nabla'}_1+\sum_{n\geq2} d^{\nabla'}_n$, where $d^{\nabla'}_1$, upon acting on $\mathfrak v^\vee[-1]$, is the (de Rham extended) dual connection coming from a chosen affine connection $\nabla'$ on vector bundle $V$. Now one can show that by picking local basis $\{v^a\}_{a=1}^{{\rm rk}(V)}$ for $V^\vee$, $d^{\nabla'}$ can be solved uniquely from the recursive formula
\begin{eqnarray*}d^{\nabla'}_{n+1}: v^a\mapsto &-& \frac12  \sum_{1\leq k\leq n}\delta^{-1}\left( [d^{\nabla'}_k, d^{\nabla'}_{n+1-k}](v^a)\right)\\
& -& \sum_{1\leq k\leq n-1}\delta^{-1}\left( d^{\nabla}_k\circ d^{\nabla'}_{n+1-k}(v^a)\right), \forall a,
\end{eqnarray*}
up to a gauge condition on $\delta^{-1} d^{\nabla'}_n(v^a)$ for all $n\geq2$. Furthermore, there exists a family version parallelising Lemma \ref{familylem} and Proposition \ref{htpyeqnhstru}, which we present without giving the detailed proof.

\begin{proposition}[Homotopy of split Abelian extensions]
Any two split Abelian extension structures on $\Omega_M\otimes(T_M[-1]\oplus V)$ are $n$-homotopic for arbitrary $n\geq0$.
\label{htpyLinftystru}
\end{proposition}

Local computations for lower components of $d^{\nabla}$ and $d^{\nabla'}$ are given here. The curvature 2-forms are locally $\frac12R_{ij}\,_m\,^n dx^i\wedge dx^j $ on $T_M$ and $\frac12R'_{ij}\,_a\,^b dx^i\wedge dx^j $ on $V$. When necessary, we use the asterisked notation for the dual Christoffel symbols and curvatures. Suppose the first gauge condition is given by \[\delta^{-1}(d^\nabla_2) = \frac1{3!}(\phi_3)^i\,_{jkl} y^j y^k y^l\frac\partial{\partial y^i},\]
\[\delta^{-1}(d^{\nabla'}_2) = \frac1{2!}(\psi_3)^a\,_{bjk} y^j y^k v^b\frac\partial{\partial v^a},\]
where $\phi_3\in \Gamma\big( {\rm Sym}^{3}(T_M^\vee )\otimes T_M \big)$ and $\psi_3\in \Gamma\big( {\rm Sym}^{2}(T_M^\vee )\otimes V^\vee[-1]\otimes V[1] \big)$.

\[ d^\nabla_1(y^i) = ( \Gamma^*)^i\,_k y^k,\]

\[ d^\nabla_2(y^i) =  -\frac13 dx^mR^*_{km}\,^i\,_j y^j y^k+\frac1{2!}(\phi_3)^i\,_{mjk} dx^m y^j y^k ,\]

\[ d^{\nabla'}_1(v^a) =  (\Gamma'^*)^a\,_b v^b,\]

\[ d^{\nabla'}_2(v^a) = -\frac12 dx^mR'^*_{im}\,^a\,_b v^by^i+(\psi_3)^a\,_{bjk} dx^j y^k v^b.\]

Dually, for any $X,Y\in \Gamma(T_M)$ and $\alpha\in \Gamma(V)$,
\[l_1(Y) = \nabla Y ,\]
\[l'_1(\alpha) = \nabla' \alpha, \]
\begin{eqnarray*}l_2(X,Y) &\equiv& l_2(y_j, y_k) X^j Y^k\\
&=&  \left( \frac13R_{jmk}\,^i +\frac13 R_{kmj}\,^i +(\phi_3)_{mjk}\,^i \right) dx^my_i X^j Y^k\\
&=& \frac13 \big(\iota_X R Y+\iota_Y R X\big)+(\phi_3)_{mjk}\,^i  dx^my_i X^j Y^k,\end{eqnarray*}
\begin{eqnarray*}l'_2(Y, \alpha)&=&  Y^i \alpha ^a\left( \frac12R'_{im}\,_a\,^b + (\psi_3)_{aim}\,^b \right)dx^m v_b \\
&=& \frac12\iota_Y R'\alpha+ (\psi_3)_{aim}\,^b dx^m v_bY^i \alpha ^a.
\end{eqnarray*}
To recover the results in $\delta^{-1}$ gauge, one can simply set $\phi_3=\psi_3=0$.

\section{Lower brackets from $d^\rho$ and $d^\mu$}
\label{appB}

The recursion starts with $\rho_1$ and $\mu_2$. Defining 
\[\rho_1(v_a)\equiv (\rho)_a\,^iy_i , \qquad d^\rho_{1} (y^i)\equiv  (\rho^*)^i\,_a\cdot v^a,\]
one observes that $(\rho)^i\,_a$ is an odd transformation since the whole structure is $(\Omega_M, d)$-linear(, as opposed to $\mathbb R$-linear or $C^\infty_M$-linear). Like the local connection form, we have that $(\rho)_a\,^i=-(\rho^*)^i\,_a$. We shall neglect the star whenever it is obvious from the context. One helpful way to understand this is to consider the superconnection $\left(\begin{array}{cc}\nabla & \rho \\0&\nabla'  \end{array}\right)$ on $T_M\oplus V[1]$, where the dual of $\rho_1$ operation is determined by the dual connection.
Solving the equation
\[-[\delta,d^\rho_{1+1}](y^i)= [d^{\nabla}_1, d^\rho_{1}](y^i)+ d^{\nabla'}_1(d^\rho_{1}(y^i)) \]
and dualizing the result, 
one obtains
\[ \rho_{1+1}(X,\alpha) =  -\rho(\nabla'_X(\alpha)) +\nabla_X(\rho(\alpha)).\]

For the next order operation,
\[-[\delta,d^\rho_{1+2}](y^i)= [d^{\nabla}_1, d^\rho_{2}](y^i)+[d^{\nabla}_2, d^\rho_{1}](y^i)+ d^{\nabla'}_2(d^\rho_{1}(y^i))+d^{\nabla'}_1(d^\rho_{2}(y^i)) , \]
gives that
\begin{eqnarray*}d^\rho_{1+2}(y^i)&=&-\frac12 y^j y^k v^a\big( \partial_k(\rho_2)^i\,_{ja}+(\rho_2)^i\,_{na}(\Gamma^*)^n\,_{jk} +(\rho_2)^i\,_{jb}(\Gamma'^*)^b\,_{ka}\\
&& - (\Gamma^*)^i\,_{kn} (\rho_2)^n_{ja}+\frac13 (R^*)_{nj}\,^i\,_k  \rho^n\,_a-(\phi_3)^i\,_{jnk}  \rho^n\,_a +(\rho_1)^i\,_{b} (\psi_3)^b\,_{ajk}\big).
\end{eqnarray*}
The existence of symmetric product $y^j$ and $y^k$ will automatically symmetrise the corresponding indices. So after taking duals, 
\begin{eqnarray*}
\rho(y_j, y_k, v_a)&=&-\nabla_{\{k} \rho_2(y_{j\}}, v_a)+\rho_2(\nabla_{\{j}y_{k\}}, v_a)  \\
&&+\rho_2(y_{\{j}, \nabla'_{k\}}v_a)  
-\frac13 R_{n\{j}\,_{k\}}\,^i y_i \rho_a\,^n\\
&&-(\phi_3)_{jnk}\,^i  \rho_a\,^ny_i +\rho_{b}\,^i (\psi_3)_{ajk}\,^by_i,
\end{eqnarray*}
where we have used the notation $\{-,-,\cdots\}$ to denote the symmetrization with respect to the corresponding indices.

In the computation of the first Atiyah-Chern class in Section \ref{ss:modular}, the following result (only works in $\delta^{-1}$ gauge) is helpful. 
\begin{eqnarray}&&\nabla_{k} \rho_2(y_{j}, v_a)-\rho_2(\nabla_{k}y_{j}, v_a)  -\rho_2(y_{j}, \nabla'_{k}v_a) \label{compmodeqn2}\\
&=& \nabla_{\{k} \rho_2(y_{j\}}, v_a)-\rho_2(\nabla_{\{k}y_{j\}}, v_a)  -\rho_2(y_{\{j}, \nabla'_{k\}}v_a)+ \nabla_{[k} \rho_2(y_{j]}, v_a)\nonumber\\
&&-\rho_2(\nabla_{[k}y_{j]}, v_a)  -\rho_2(y_{[j}, \nabla'_{k]}v_a)\nonumber\\
&=& -\rho_{jka}\,^i y_i-\frac13 R_{n\{j}\,_{k\}}\,^i y_i \rho_a\,^n-\frac12 R'_{kja}\,^c\rho_c\,^i y_i+ \frac12 \rho_a\,^n R_{kjn}\,^i y_i\nonumber\\
&=&-\rho(y_j, y_k, v_a)+\frac13 R(\rho(v_a), y_j)y_k -\frac23 R(\rho(v_a), y_k)y_j -\frac12\rho( R'_{kj}v_a).
\nonumber\end{eqnarray}

For $d^\mu$, the first nontrivial recursive computation gives
\[-[\delta,d^\mu_{2+1}](v^a)= [d^{\nabla'}_1, d^\mu_{2}](v^a)+ d^{\rho}_1(d^{\nabla'}_{2}(v^a)),\]
so
\begin{eqnarray*}
d^\mu_{2+1}(v^a)&=&-\frac12 y^kv^b\wedge v^c \big(\partial_k(\mu^a_{bc})+\mu^a_{dc}  (\Gamma'^*)_k\,^d\,_b  + \mu^a_{bd} (\Gamma'^*)_k\,^d\,_c   \\
&&-(\Gamma'^*)_k\,^a\,_d \mu^d_{bc}  - R'^*_{ik}\,^a\,_b (\rho^*)^i\,_c +2(\psi_3)^a\,_{bki} (\rho^*)^i\,_c\big).
\end{eqnarray*}

Dually this gives
\begin{eqnarray*}\mu_{2+1}(y_k, v_b, v_c)&=&-\nabla'_k\mu(v_b, v_c)+\mu(\nabla'_k v_b, v_c)  + \mu( v_b, \nabla'_k v_c)  \\
&&+ \frac12R'(\rho(v_c),y_k)v_b -\frac12R'(\rho(v_b),y_k)v_c\\
&&+(\psi_3)_{bki}\,^a(\rho)_c\,^iv_a-(\psi_3)_{cki}\,^a(\rho)_b\,^i v_a .
\end{eqnarray*}

\newpage
\bibliographystyle{plain}

\begin{thebibliography}{99}

\bibitem{AAC}
C. A. Abad and M. Crainic, 
{\em Representations up to homotopy of Lie algebroids},
J. Reine Angew. Math. (Crelles Journal) {\bf 663} (2012) 91.
\doi{10.1515/crelle.2011.095}

\bibitem{AAC11} 
C. A. Abad and M. Crainic, 
{\em The Weil algebra and the Van Est isomorphism},
Ann. Inst. Fourier, Grenoble {\bf 61}(3) (2011) 927.
\doi{10.5802/aif.2633}

\bibitem{AM}
A.L. Agore and G. Militaru,
{\em Extending structures for Lie algebras}, 
Monatsh. Math. {\bf 174} (2014), 169,
\doi{10.1007/s00605-013-0537-7}.


\bibitem{AKSZ}
M. Alexandrov, M. Kontsevich, A. S. Schwarz, O. Zaboronsky, 
{\em The geometry of the master equation and topological quantum field theory}, 
Int. J. Mod. Phys. A {\bf 12}(07) (1997) 1405.
\doi{10.1142/S0217751X97001031}

\bibitem{BGN}
C. Bai, L. Guo and X. Ni,
{\em Generalizations of the classical Yang-Baxter equation and $\mathcal O$-operators}
J. Math. Phys. {\bf 52} (2011) 063515. 
\doi{10.1063/1.3600538}

\bibitem{BGV}
N. Berline, E. Getzler, and M. Vergne, 
{\em Heat kernels and Dirac operators}, 
Grundlehren der mathematischen Wissenschaften, vol. {\bf 298}, Springer-Verlag (1992).
\doi{10.1007/978-3-642-58088-8}

\bibitem{Blaom}
A. D. Blaom, 
{\em Geometric structures as deformed infinitesimal symmetries}, Trans. Amer. Math. Soc. {\bf 358} (2006) 3651.

\bibitem{CCN}
D. Calaque, R. Campos and J. Nuiten,
{\em Lie algebroids are curved Lie algebras},
arXiv:2103.10728 [math.AT].

\bibitem{CVB}
D. Calaque and M. Van den Bergh,
{\em Hochschild cohomology and Atiyah classes}, Adv. Math.
{\bf 224}(5) (2010) 1839.
\doi{10.1016/j.aim.2010.01.012}

\bibitem{CF00}
A. S. Cattaneo and G. Felder, 
{\em A path integral approach to the Kontsevich quantization formula}, Commun. Math. Phys. {\bf 212} (2000) 591-611. 
\doi{10.1007/s002200000229}

\bibitem{CFT}
A.S. Cattaneo, G. Felder and L. Tomassini, 
{\em From local to global deformation quantization of Poisson manifolds}, Duke Math. J. {\bf 115}(2) (2002) 329.
\doi{10.1215/S0012-7094-02-11524-5}

\bibitem{CRM1}
A. S. Cattaneo, P. Mnev and N. Reshetikhin, 
{\em Classical BV theories on manifolds with boundary}, Comm. Math. Phys. {\bf 332}(2) (2014) 535.
\doi{10.1007/s00220-014-2145-3}

\bibitem{CRM2}
A. S. Cattaneo, P. Mnev and N. Reshetikhin, 
{\em Perturbative quantum gauge theories on manifolds with boundary}, 
Comm. Math. Phys. {\bf 357}(2) (2018) 631.
\doi{10.1007/s00220-017-3031-6}

\bibitem{CSX}
Z. Chen, M. Sti\'{e}non and P. Xu,
{\em From Atiyah classes to homotopy Leibniz algebras}, Comm. Math. Phys. {\bf 341}(1) (2016) 309.
\doi{10.1007/s00220-015-2494-6}

\bibitem{CXX}
Z. Chen, M. Xiang and P. Xu, 
{\em Atiyah and Todd classes arising from integrable distributions}, J. Geom. Phys. {\bf 136} (2019) 52.
\doi{10.1016/j.geomphys.2018.10.011}

\bibitem{Co11}
K. Costello, {\em A geometric construction of Witten genus, II}, arXiv:1112.0816.

\bibitem{CG2}
K. Costello and O. Gwilliam,
{\em Factorization Algebras in Quantum Field Theory},
vol. {\bf 2}, Cambridge University Press (2021).
\doi{10.1017/9781316678626}

\bibitem{Crainic00}
M. Crainic, 
{\em Differentiable and algebroid cohomology, van Est isomorphisms, and characteristic classes},
Comment. Math. Helv. {\bf 78}(4) (2003) 681.
\doi{10.1007/s00014-001-0766-9}

\bibitem{Crainic04}
M. Crainic, 
{\em On the perturbation lemma, and deformations}, 
{arXiv:math/0403266.}

\bibitem{CF}
M. Crainic and R.L. Fernandes,
{\em Secondary Characteristic Classes of Lie Algebroids}, Lect. Notes
Phys. {\bf 662} (2005) 157.

\bibitem{Dolgushev}
V. A. Dolgushev, {\em Covariant and equivariant formality theorems}, Adv. Math. {\bf 191}(1) (2005) 147. 
\doi{10.1016/j.aim.2004.02.001}

\bibitem{ELW}
S. Evens, J.-H. Lu and A. Weinstein, 
{\em Transverse measures, the modular class and a cohomology pairing for Lie algebroids}, 
Quart. J. Math. Oxford (2) {\bf 50} (1999) 417. 
\doi{10.1093/qjmath/50.200.417}

\bibitem{Fedosov}
B.V. Fedosov, 
{\em A simple geometrical construction of deformation quantization}, J. Diff.
Geom. {\bf 40} (1994) 213.
\doi{10.4310/jdg/1214455536}

\bibitem{Fernandes}
 R. L. Fernandes, 
 {\em Lie algebroids, holonomy and characteristic classes}, Adv. Math.
{\bf 170}(1) (2002) 119.
\doi{10.1006/aima.2001.2070}

\bibitem{GG_ahat}
R. Grady and O. Gwilliam, 
{\em One-dimensional Chern-Simons theory and the \^{A} genus}, 
Algebr. Geom. Topol. {\bf 14} (2014) 2299.
\doi{10.2140/agt.2014.14.2299}

\bibitem{GG_l}
R. Grady and O. Gwilliam, 
{\em Lie algebroid as $l_\infty$ spaces}, 
J. Inst. Math. Jussieu (2018)1.
\doi{10.1017/S1474748018000075}

\bibitem{GLL}
R. Grady, Q. Li and S. Li,
{\em Batalin–Vilkovisky quantization and the algebraic index},
Adv. Math. {\bf 317} (2017) 575.
\doi{10.1016/j.aim.2017.07.007}

\bibitem{Kalkman}
J. Kalkman,
{\em BRST model for equivariant cohomology and representatives for the
equivariant Thom class}, 
Comm. Math. Phys. {\bf 153}(3) (1993) 447.


\bibitem{Kapranov}
M. Kapranov,
{\em Rozansky-Witten invariants via Atiyah classes}, 
Compos. Math. 115(1) (1999) 71.
\doi{10.1023/A:1000664527238}

\bibitem{Kont97}
M. Kontsevich, 
{\em Deformation quantization of Poisson manifolds}, 
Lett. Math. Phys. {\bf 66}(3) (2003) 157.
\doi{10.1023/B:MATH.0000027508.00421.bf}

\bibitem{KSM}
Y. Kosmann-Schwarzbach and F. Magri,
{\em Poisson-Lie groups and complete integrability, I. Frinfeld bialgebras, dual extensions, and their canonical representations}, 
Ann. Inst. Henri Poincar\'{e}, Phys. Th\'{e}or.,  {\rm 49A}, n$^\circ$4, (1988) 433-460.


\bibitem{KS}
A. Kotova and T. Strobl,
{\em Universal Cartan-Lie algebroid of an anchored bundle with connection and compatible geometries}, 
J. Geom. Phys. {\rm 135} (2019) 1.
\doi{10.1016/j.geomphys.2018.09.004}

\bibitem{MSX}
R. A. Mehta, M. Sti\'{e}non and P. Xu, 
{\em The Atiyah class of a dg-vector bundle}, 
C. R. Math. Acad. Sci. Paris {\bf 353}(4) (2015) 357.
\doi{10.1016/j.crma.2015.01.019}

\bibitem{OZ}
O. Brahic and M. Zambon, 
{\em $L_\infty$-actions of Lie algebroids},
Commun. Contemp. Math. {\bf 24}(04) (2022) 2150013.
\doi{10.1142/S0219199721500139}

\bibitem{SSX}
S. Seol, M. Sti\'{e}non and P. Xu,
{\em Dg Manifolds, Formal Exponential Maps and Homotopy Lie Algebras},
Commun. Math. Phys. {\bf 391} (2022) 33.
\doi{10.1007/s00220-021-04265-x}

\bibitem{Vain}
A. Y. Va\'{\i}ntrob,
{\em Lie algebroids and homological vector fields}, Uspekhi Mat. Nauk 52(2(314)) (1997) 161(in Russia), Russian Math. Surveys {\bf 52}(2) (1997) 428.
\doi{10.1070/RM1997v052n02ABEH001802}

\bibitem{Xiang}
M. Xiang,
{\em Atiyah and Todd classes of regular Lie algebroids}, 
Sci. China. Math. {\bf 66}, No. 7 (2023) 1569.
\doi{10.1007/s11425-021-2017-7}

\end{thebibliography}

\end{document}